\theoremstyle{plain}
\newtheorem{theorem}{Theorem}[section]
\newtheorem{corollary}[theorem]{Corollary}
\newtheorem{lemma}[theorem]{Lemma}
\newtheorem{proposition}[theorem]{Proposition}
\newtheorem{definition}[theorem]{Definition}
\newtheorem*{definition*}{Definition}
\theoremstyle{remark}
\newtheorem{remark}[theorem]{Remark}
\newtheorem{example}[theorem]{Example}
\newtheorem*{claim*}{Claim}
\newtheorem*{remark*}{Remark}
\newtheorem*{example*}{Example}
\newtheorem*{notation*}{Notation}
\def\N{{\mathbb N}}
\def\R{{\mathbb R}}
\def\PP{{\mathbb P}}
\def\EE{{\mathbb E}}
\newcommand{\K}{\mathrm{k}}
\newcommand{\KK}{\mathrm{K}}
\newcommand{\LL}{\mathrm{L}}
\newcommand{\RR}{\mathrm{R}}
\newcommand{\HH}{\mathrm{H}}
\newcommand{\Dom}{\mathit{Dom}}
\newcommand{\A}{\mathcal{A}}
\newcommand{\E}{\mathcal{E}}
\newcommand{\BE}{\mathrm{BE}}
\newcommand{\CD}{\mathrm{CD}}
\newcommand{\Ric}{\mathrm{Ric}}
\newcommand{\diam}{\mathrm{diam}}
\newcommand{\tr}{\mathrm{tr}}
\numberwithin{equation}{section}
\begin{document}
\title[Ricci Tensor for Diffusion Operators and Curvature-Dimension Inequalities]{Ricci Tensor for Diffusion Operators and Curvature-Dimension Inequalities under Conformal Transformations and Time Changes}
%\lhead{Karl-Theodor Sturm}
%\rhead{Ricci Tensor for Diffusion Operators and Curvature-Dimension Inequalities}
\author{Karl-Theodor Sturm}
 \address{
 University of Bonn\\
 Institute for Applied Mathematics\\
 Endenicher Allee 60\\
 53115 Bonn\\
 Germany}
  \email{sturm@uni-bonn.de}

\thanks{
The author gratefully acknowledges  support by the European Union through the
ERC Advanced Grant ``Metric measure spaces and Ricci curvature - analytic, geometric, and probabilistic challenges" (``RicciBounds'')
as well as support by the German Research Foundation through the Hausdorff Center for Mathematics and the Collaborative Research Center 1060 ``The Mathematics of Emergent Effects''.
}

\maketitle

\begin{abstract}
Within the $\Gamma_2$-calculus of Bakry and Ledoux, we define the Ricci tensor induced by a diffusion operator, we deduce precise formulas for its
behavior under drift transformation, time change and conformal transformation, and we derive new transformation results for the curvature-dimension conditions of Bakry-{\'E}mery as well as for those of Lott-Sturm-Villani.
Our results are based on new identities and sharp estimates for the $N$-Ricci tensor and for the Hessian. In particular, we obtain Bochner's formula
in the general setting.
\end{abstract}

\section{Introduction}
%\subsection{Diffusion Generators and their Transformations}
Generators of Markov diffusions -- i.e. Markov processes with continuous sample paths -- are second-order differential operators $\LL$ or suitable generalizations of them.
These generators give rise to intrinsically defined geometries on the underlying spaces $X$. Equilibration and regularization properties of such stochastic processes are intimately linked to curvature bounds for the induced geometries.

We regard $\Gamma(u,v)(x)=\frac12[{\LL}(uv)-u{\LL}v-v{\LL}u](x)$ as the `metric tensor' at $x\in X$ and
\begin{eqnarray}
\RR(u,u)(x)=\inf\big\{ \Gamma_2(\tilde u,\tilde u)(x): \ \Gamma(\tilde u- u)(x)=0\big\}
\end{eqnarray}
as the 'Ricci tensor' at $x$ where
$\Gamma_2(u,v)=\frac12[{\LL}\Gamma(u,v)-\Gamma(u,{\LL}v)-\Gamma(v,{\LL}u)]$.
More generally, for $N\in[1,\infty]$ -- which will play the role of an upper bound for the dimension -- we consider the $N$-Ricci tensor
 \begin{eqnarray*}
\RR_N(u,u)(x)=\inf\big\{ \Gamma_2(\tilde u,\tilde u)(x)-\frac1N \big(\LL \tilde u\big)^2(x): \ \Gamma(\tilde u- u)(x)=0\big\}.
\end{eqnarray*}
If $\LL$ is the Laplcae-Beltrami operator on a complete $n$-dimensional Riemannian manifold then
\begin{eqnarray*}
  \RR_N(u,v)=\Ric(\nabla u,\nabla v)
 \end{eqnarray*}
for all $N\in[n,\infty]$ and all $u,v$.
One of the key results of this paper is \emph{Bochner's formula% (or Bochner's identity)
} in this general setting.
\begin{theorem}
Under mild regularity assumptions, for each $u$
\begin{eqnarray}\Gamma_2(u,u)=\RR(u,u)+\big\|\HH_u(.)\big\|_{HS}^2\end{eqnarray}
where $\HH_u(.)$ denotes the {Hessian} of $u$ and $\big\|\,.\,\big\|_{HS}$ its {Hilbert-Schmidt norm}.
\end{theorem}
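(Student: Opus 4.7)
The plan is to reduce Bochner's identity to a pointwise additive splitting of $\Gamma_2$ at $x$ into a Hilbert--Schmidt Hessian term plus a remainder that depends on $u$ only through its first--order jet at $x$, i.e.\ through the linear functional $\phi\mapsto\Gamma(u,\phi)(x)$. Once such a splitting is in hand, the infimum defining $\RR(u,u)(x)$ will collapse to exactly that first--order remainder, and the difference $\Gamma_2(u,u)(x)-\RR(u,u)(x)$ will be precisely $\|\HH_u(x)\|_{HS}^2$.

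The first step is therefore to establish the algebraic identity
\[
\Gamma_2(\tilde u,\tilde u)(x)\;=\;\|\HH_{\tilde u}(x)\|_{HS}^2\;+\;\Phi_x(\tilde u),
\]
where $\Phi_x(\tilde u)$ depends on $\tilde u$ only through $\phi\mapsto\Gamma(\tilde u,\phi)(x)$. Concretely I would pick a local ``orthonormal frame'' $\{d\phi_i(x)\}$ for the tangent space determined by $\Gamma$ at $x$, expand $\|\HH_{\tilde u}(x)\|_{HS}^2=\sum_{i,j}\HH_{\tilde u}(\phi_i,\phi_j)(x)^2$ using the definition $2\HH_{\tilde u}(\phi,\psi)=\Gamma(\phi,\Gamma(\tilde u,\psi))+\Gamma(\psi,\Gamma(\tilde u,\phi))-\Gamma(\tilde u,\Gamma(\phi,\psi))$, and match the result against $\Gamma_2(\tilde u,\tilde u)(x)=\tfrac12\LL\Gamma(\tilde u,\tilde u)(x)-\Gamma(\tilde u,\LL\tilde u)(x)$. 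Checking that the residual $\Phi_x(\tilde u):=\Gamma_2(\tilde u,\tilde u)(x)-\|\HH_{\tilde u}(x)\|_{HS}^2$ carries no second--order information about $\tilde u$ at $x$ is the abstract counterpart of the classical Bochner computation $\tfrac12\Delta|\nabla u|^2-\langle\nabla u,\nabla\Delta u\rangle=\|\Hess u\|_{HS}^2+\Ric(\nabla u,\nabla u)$, and should go through by a purely algebraic manipulation of the $\Gamma$--identities.

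Granting the splitting, the second step is immediate: if $\Gamma(\tilde u-u)(x)=0$ then $\Gamma(\tilde u,\phi)(x)=\Gamma(u,\phi)(x)$ for every admissible $\phi$, hence $\Phi_x(\tilde u)=\Phi_x(u)$. Inserting this into the definition of the Ricci tensor gives
\[
\RR(u,u)(x)\;=\;\Phi_x(u)\;+\;\inf\bigl\{\|\HH_{\tilde u}(x)\|_{HS}^2\,:\,\Gamma(\tilde u-u)(x)=0\bigr\},
\]
which already yields the lower bound $\RR(u,u)(x)\ge \Gamma_2(u,u)(x)-\|\HH_u(x)\|_{HS}^2$.

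The third and crucial step is to drive the residual infimum to zero by exhibiting, for each $u$ and each $x$, a sequence $\tilde u_n$ of admissible test functions with $\Gamma(\tilde u_n-u)(x)=0$ and $\|\HH_{\tilde u_n}(x)\|_{HS}^2\to 0$. In a smooth Riemannian model this is trivial via local affine functions, but in the abstract diffusion setting it requires that the algebra of test functions be rich enough near $x$ to prescribe the ``gradient'' and the ``Hessian'' independently. This is where the ``mild regularity assumptions'' of the statement enter, and it is the step I expect to be the main obstacle: it is a genuine hypothesis on the underlying algebra $\A$ rather than a formal manipulation. Combining the three steps yields $\RR(u,u)(x)=\Gamma_2(u,u)(x)-\|\HH_u(x)\|_{HS}^2$, which is Bochner's identity.
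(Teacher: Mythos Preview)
Your overall architecture is right—split $\Gamma_2$ into a Hessian piece and a first-order remainder, then minimize—but you have swapped the roles of the two directions with respect to where the ``mild regularity assumption'' actually enters.

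Your Step~1 is \emph{not} a purely algebraic manipulation. Unpacking it, the claim that $\Phi_x(\tilde u):=\Gamma_2(\tilde u)(x)-\|\HH_{\tilde u}(x)\|_{HS}^2$ depends only on the first-order jet at $x$ is equivalent (by polarization) to
\[
\Gamma_2(f,g)(x)=\langle \HH_f,\HH_g\rangle_{HS}(x)\qquad\text{for all }g\in\A\text{ with }\Gamma(g)(x)=0.
\]
This identity does follow from the chain rule when $g=\psi\circ(e_1,\ldots,e_r)$ with $\psi_i(e(x))=0$ and $\{e_i\}$ orthonormal at $x$, but for an \emph{arbitrary} $g\in\A$ with $\Gamma(g)(x)=0$ there is nothing algebraic to compute: one must be able to approximate $g$ by such special functions in a way that passes $\Gamma_2$ to the limit. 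That approximation property is exactly the paper's regularity hypothesis, and the displayed identity is singled out there as an equivalent reformulation of it. So the inequality $\RR(u,u)(x)\ge\Gamma_2(u,u)(x)-\|\HH_u(x)\|_{HS}^2$ that you extract in Step~2 is precisely the direction that \emph{requires} regularity, not the one that comes for free.

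Conversely, your Step~3 is the unconditional direction. One does not need any assumption to exhibit competitors: take $\tilde u=u+\psi\circ(e_1,\ldots,e_r)$ with $\psi_i(e(x))=0$ and $\psi_{ij}(e(x))=-\HH_u(e_i,e_j)(x)$; the chain rule for $\Gamma_2$ then gives $\Gamma_2(\tilde u)(x)=\Gamma_2(u)(x)-\sum_{i,j\le r}\HH_u(e_i,e_j)(x)^2$, and letting the orthonormal system grow yields $\RR(u,u)(x)\le\Gamma_2(u,u)(x)-\|\HH_u(x)\|_{HS}^2$ directly—without ever needing to make $\|\HH_{\tilde u}\|_{HS}$ itself small. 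So the construction you flag as ``the main obstacle'' is in fact the easy half; the genuine obstacle is hidden in the step you dismissed as routine algebra.
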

A refined assertion states that
\begin{eqnarray*}
\Gamma_2(u,u)
&=&\RR_N(u,u)+\big\| \HH_u(.) \big\|^2_{HS}+
\frac1{N-n}\Big( \tr\, \HH_u(.) - \, \LL u\Big)^2
\end{eqnarray*}
whenever $N$ is larger than the vector space dimension of the `tangent space' defined in terms of $\Gamma$.
In particular, this equality implies \emph{Bochner's inequality}
$\Gamma_2(u,u)
\ge\RR_N(u,u)+\frac1N \big(\LL u\big)^2$.
The latter in turn implies
 the \emph{energetic curvature-dimension condition} or  \emph{Bakry-{\'E}mery condition}
\begin{equation}\label{BE}
\Gamma_2(u,u)\ge \K\cdot\Gamma(u,u)+\frac1N ({\LL}u)^2
\end{equation}
for every function $\K: X\to\R$ which is  a pointwise lower bound for the $N$-Ricci tensor in the sense that
 $\RR_N\ge \K\cdot\Gamma$. 
 
 \bigskip

The second major topic  of this paper is to study the transformation of the $N$-Ricci tensor and of the
the Bakry-{\'E}mery condition
under each of the basic transformations of stochastic processes.
The transformations which we have in mind are:
\begin{itemize}
\item time change ${\LL}' u=f^2\,{\LL}u$
\item drift transformation ${\LL}' u={\LL}u+ \Gamma(h,u)$
\item metric transformation ${\LL}' u=f^2\,{\LL}u+ \Gamma(f^2,u)$
\item conformal transformation ${\LL}' u=f^2\,{\LL}u- \frac{N-2}2\Gamma(f^2,u)$
\item Doob transformation  ${\LL}' u=\frac1{\rho}{\LL}(u\rho)$ provided ${\LL}\rho=0$ and $\rho>0$.
\end{itemize}
Indeed, we will study more general transformations of the form
\begin{equation}\label{Lsharp}
{\LL}' u=f^2\, {\LL}u+f\,\sum_{i=1}^r g_i \, \Gamma(h_i,u)
\end{equation}
which will cover all the previous  examples and, beyond that, provides various new examples (including non-reversible ones).
Our main result is

\begin{theorem}
For every $N'>N$ the $N'$-Ricci tensor for the operator $\LL'$ from   \eqref{Lsharp}
can be pointwise estimated from below in terms of the $N$-Ricci tensor for $\LL$ and
 the functions  $f,g_i,h_i$.
 %as well as the derivatives of $f,g_i,h_i$ up to second order.
For instance, in the case of the time change
\begin{eqnarray}
\RR'_{N'}\ge f^2\,\RR_N +
\frac12{\LL}f^2-2\Gamma(f)^2- \frac{(N-2)(N'-2)}{N'-N}\,\Gamma(f,.)^2.
\end{eqnarray}
In the particular case of the conformal transformation, such an estimate is also available for $N'=N$ and in this case
it indeed is an \emph{equality}.
\end{theorem}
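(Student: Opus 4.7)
The plan is to apply the refined Bochner equality (the $N$-version from the introduction) to $\Gamma_2(\tilde u,\tilde u)$ for an admissible test function $\tilde u$, subtract $\frac{1}{N'}(\LL'\tilde u)^2$, and optimize the resulting quadratic functional in the Hessian $\HH_{\tilde u}$. The slack $\frac{1}{N'-N}$ arises from completing the remaining square in $\LL\tilde u$.

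\smallskip

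First, since the first-order part $f\sum_i g_i\Gamma(h_i,\cdot)$ of $\LL'$ does not enter the carr\'e du champ, a direct computation gives $\Gamma'(u,v)=f^2\Gamma(u,v)$. Consequently on $\{f\neq 0\}$ the side condition $\Gamma'(\tilde u-u)=0$ defining $\RR'_{N'}(u,u)$ coincides with $\Gamma(\tilde u-u)=0$ defining $\RR_N(u,u)$, so both infima range over the same family of perturbations; every admissible $\tilde u$ satisfies $\Gamma(\tilde u,v)(x)=\Gamma(u,v)(x)$ for all $v$ (by Cauchy--Schwarz for $\Gamma$), while the Hessian of $\tilde u$ at $x$ remains free.

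\smallskip

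Next I would expand $\Gamma'_2(\tilde u,\tilde u)$ via Leibniz applied to $\LL'\bigl(\Gamma'(\tilde u,\tilde u)\bigr)$ and to $\Gamma'(\tilde u,\LL'\tilde u)$. The time-change part produces
\begin{equation*}
f^4\Gamma_2(\tilde u,\tilde u)+\tfrac12 f^2(\LL f^2)\Gamma(u,u)+f^2\Gamma\bigl(f^2,\Gamma(\tilde u,\tilde u)\bigr)-f^2(\LL\tilde u)\Gamma(u,f^2),
\end{equation*}
and each drift summand contributes $fg_i\bigl[\Gamma(h_i,\Gamma(\tilde u,\tilde u))-2\Gamma(\tilde u,\Gamma(h_i,\tilde u))\bigr]$ plus lower-order terms in $\LL g_i$, $\Gamma(g_i,\cdot)$ and $\Gamma(f,\cdot)$, each bracket being recognizable via the Bochner identity as $-2\HH_{\tilde u}(\nabla h_i,\nabla u)$. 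Substituting the refined Bochner equality
\begin{equation*}
\Gamma_2(\tilde u,\tilde u)=\RR_N(u,u)+\|\HH_{\tilde u}\|_{HS}^2+\tfrac{1}{N-n}\bigl(\tr\HH_{\tilde u}-\LL\tilde u\bigr)^2
\end{equation*}
(using that $\RR_N(\tilde u,\tilde u)=\RR_N(u,u)$ under the Step~1 constraint) and subtracting $\tfrac{1}{N'}(\LL'\tilde u)^2$ yields a quadratic functional in the pair $(\LL\tilde u,\HH_{\tilde u})$ with explicit coefficients in $f,g_i,h_i$.

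\smallskip

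Because $\|\HH_{\tilde u}\|_{HS}^2$ controls all Hessian directions and $\frac{f^4}{N}-\frac{f^4}{N'}=\frac{f^4(N'-N)}{NN'}>0$ gives a positive $(\LL\tilde u)^2$-coefficient exactly because $N'>N$, the quadratic is coercive and its pointwise infimum is obtained by completing squares. Carrying out this optimization produces $\RR'_{N'}(u,u)\ge f^2\RR_N(u,u)+\mathcal{T}$ for an explicit tensor $\mathcal{T}$ in $f,g_i,h_i$. In the pure time-change case $g_i\equiv 0$, the only surviving linear piece $-f^2(\LL\tilde u)\Gamma(u,f^2)$ forces the prefactor $\frac{(N-2)(N'-2)}{N'-N}$ in front of $\Gamma(f,\cdot)^2$. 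For the conformal transformation the drift is calibrated so that these cross terms vanish identically already at $N'=N$: the completion of the square degenerates into an identity, and the inequality becomes equality. The main obstacle is the algebraic bookkeeping in the previous paragraph---identifying every Hessian and first-order contribution and performing the finite-dimensional Hessian optimization cleanly enough that its optimum can be read off as precisely the claimed tensor.
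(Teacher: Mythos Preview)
Your strategy and the paper's diverge at the crucial step. You plan to substitute the refined Bochner \emph{equality}
\[
\Gamma_2(\tilde u)=\RR_N(u)+\|\HH_{\tilde u}\|_{HS}^2+\tfrac{1}{N-n}\bigl(\tr\HH_{\tilde u}-\LL\tilde u\bigr)^2
\]
into the expansion of $\Gamma_2'(\tilde u)$ and then minimize a quadratic form in the full Hessian $\HH_{\tilde u}$. But that identity is only established in the paper under the additional hypothesis that a \emph{regular} orthonormal system exists at $x$ (Section~5), and even the one-sided inequality $\Gamma_2(\tilde u)\ge\RR_N(u)+\|\HH_{\tilde u}\|_{HS}^2+\dots$ (Theorem~\ref{super-self}) needs $n(x)=\dim(\A,\Gamma)(x)<\infty$ to make sense of $\tr\HH_{\tilde u}$ and of the HS norm. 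The theorem you are asked to prove carries no such hypothesis, so invoking the Bochner formula imports assumptions that are not available. (A secondary slip: the drift bracket $\Gamma(h_i,\Gamma(\tilde u,\tilde u))-2\Gamma(\tilde u,\Gamma(h_i,\tilde u))$ equals $-2\HH_{h_i}(u,u)$, not $-2\HH_{\tilde u}(h_i,u)$; fortunately this term does not vary with $\tilde u$.)

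The paper avoids all of this by never decomposing $\Gamma_2(\tilde u)$ into ``Ricci plus full Hessian''. Instead it proves a targeted estimate (Theorem~\ref{sharpGamma2}/Corollary~\ref{Hess-est}): for any $g,h$,
\[
2\Bigl|\HH_u(g,h)-\tfrac1N\Gamma(g,h)\LL u\Bigr|^2
\le\Bigl[\Gamma_2(u)-\tfrac1N(\LL u)^2-\RR_N(u)\Bigr]\cdot\Bigl[\tfrac{N-2}{N}\Gamma(g,h)^2+\Gamma(g)\Gamma(h)\Bigr],
\]
obtained simply by plugging the one-parameter family $\tilde u=u+t\bigl[gh-g(x)h-h(x)g\bigr]$ into the \emph{definition} $\Gamma_2(\tilde u)\ge\RR_N(\tilde u)+\tfrac1N(\LL\tilde u)^2$ and optimizing over $t\in\R$. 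This single scalar optimization replaces your infinite-dimensional Hessian minimization and is what makes the argument work without any dimension or regularity hypothesis. After expanding $\Gamma_2'(u)$, the only $\tilde u$-sensitive cross-term is $2f^2\bigl[\HH_u(f^2,u)-\tfrac1N\LL u\,\Gamma(f^2,u)\bigr]$; Corollary~\ref{Hess-est} absorbs it against $f^4[\Gamma_2(u)-\tfrac1N(\LL u)^2-\RR_N(u)]$, and the $\tfrac1{N'-N}$ factor then drops out of the elementary inequality $\tfrac1N a^2-\tfrac1{N'}b^2\ge-\tfrac1{N'-N}(a-b)^2$.
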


\begin{corollary}
Assume that ${\LL}$ satisfies the condition $\BE(k,N)$ from \eqref{BE}. Then for every $N'>N$ the transformed operator ${\LL}'$ from   \eqref{Lsharp}
satisfies the condition $\BE(\K',N')$ with a function $\K'$ explicitly given in terms of the functions $\K$ and $f,g_i,h_i$.
 %as well as the derivatives of $f,g_i,h_i$ up to second order.
For instance, in the case of the time change
\begin{eqnarray}
\K'= f^2\,\K +
\frac12{\LL}f^2-N^*\,\Gamma(f,f).
\end{eqnarray}
with $N^*=2+\frac{[(N-2)(N'-2)]_+}{N'-N}$.
In the particular case of the conformal transformation, such a Bakry-{\'E}mery condition is also available for $N'=N$.
\end{corollary}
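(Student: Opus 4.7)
The plan is to derive the corollary directly from the preceding theorem. The three ingredients are: (i) the Ricci lower bound implicit in $\BE(\K,N)$, (ii) a Cauchy--Schwarz step that converts the theorem's bound on $\RR'_{N'}$ into a bound of the form $\K'\cdot\Gamma'$, and (iii) Bochner's inequality for $\LL'$, which turns such a Ricci bound back into a Bakry--Emery condition.

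For (i), I would first observe that $\BE(\K,N)$ is equivalent to the pointwise lower bound $\RR_N(u,u)\ge \K\cdot\Gamma(u,u)$ on the $N$-Ricci tensor of $\LL$: any competitor $\tilde u$ in the infimum defining $\RR_N(u,u)$ satisfies $\Gamma(\tilde u,\tilde u)=\Gamma(u,u)$ at the point under consideration, so $\BE(\K,N)$ applied to $\tilde u$ reads $\Gamma_2(\tilde u,\tilde u)-\tfrac{1}{N}(\LL\tilde u)^2\ge \K\,\Gamma(u,u)$, and taking the infimum over $\tilde u$ preserves this inequality. Next, the theorem yields a pointwise lower bound for $\RR'_{N'}(u,u)$ of the schematic shape $f^2\,\RR_N(u,u)+A-B(u)$, where $A$ collects the $u$-independent quantities built from $f,g_i,h_i$ and $B(u)$ is a sum of cross terms $\Gamma(f,u)^2$ and $\Gamma(h_i,u)^2$ carrying coefficients that depend on $N,N'$. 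Substituting the bound from (i) feeds the $\K$-information into $f^2\,\RR_N(u,u)$.

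For (ii), the decisive step is the treatment of the cross terms. For each contribution $c\cdot\Gamma(h,u)^2$ to $B(u)$, if $c\ge 0$ I would apply Cauchy--Schwarz $\Gamma(h,u)^2\le \Gamma(h,h)\,\Gamma(u,u)$; if $c\le 0$ the term is already of favourable sign and can simply be dropped. This dichotomy is exactly what produces the positive part $[(N-2)(N'-2)]_+$ in the definition of $N^*$ in the time-change case: the structural contribution $-2\Gamma(f,u)^2$ always needs Cauchy--Schwarz, whereas $-\tfrac{(N-2)(N'-2)}{N'-N}\Gamma(f,u)^2$ needs it only when its coefficient is nonnegative. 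After this reduction one arrives at $\RR'_{N'}(u,u)\ge \K'\,\Gamma'(u,u)$ with $\K'$ given by the explicit formula claimed. For (iii), Bochner's inequality $\Gamma'_2(u,u)\ge \RR'_{N'}(u,u)+\tfrac{1}{N'}(\LL'u)^2$ for $\LL'$, provided by the paper's first theorem in this generality, then converts the Ricci bound into precisely $\BE(\K',N')$.

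The conformal case $N'=N$ is not covered by the strict $N'>N$ estimate, but there one uses the equality part of the theorem instead, so that the same Cauchy--Schwarz reduction goes through without ever dividing by $N'-N$. The main obstacle I expect is not any single inequality but the bookkeeping for the general operator \eqref{Lsharp}: with several functions $g_i,h_i$ the cross terms in $B(u)$ proliferate, and one must keep track of which portion of each coefficient is absorbed by Cauchy--Schwarz and which is dropped, verifying that the final expression matches the explicit $\K'$ promised by the corollary.
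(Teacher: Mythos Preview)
Your overall strategy is correct and is essentially the paper's own route: translate $\BE(\K,N)$ into the pointwise bound $\RR_N\ge\K\,\Gamma$, feed this into the theorem's lower bound for $\RR'_{N'}$, absorb the $u$-dependent cross terms, and then read off $\BE(\K',N')$ via the trivial inequality $\Gamma'_2(u)\ge\RR'_{N'}(u)+\tfrac1{N'}(\LL'u)^2$.

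Two small corrections. First, in the time-change computation the constant $2$ in $N^*=2+\tfrac{[(N-2)(N'-2)]_+}{N'-N}$ does \emph{not} arise from a Cauchy--Schwarz step on a term $-2\Gamma(f,u)^2$. In the detailed estimate (Corollary~\ref{g2-est-time}) the relevant term is $-\tfrac12\Gamma(f^2)\,\Gamma(u)=-2f^2\Gamma(f)\,\Gamma(u)$, which is already proportional to $\Gamma(u)$ and needs no further manipulation; only the single cross term $-\tfrac{(N-2)(N'-2)}{N'-N}f^2\Gamma(f,u)^2$ is treated by the Cauchy--Schwarz/drop dichotomy. Second, your step~(iii) does not require the Bochner \emph{formula}: the inequality $\Gamma'_2(u)\ge\RR'_{N'}(u)+\tfrac1{N'}(\LL'u)^2$ is immediate from the definition of $\RR'_{N'}$ (take $\tilde u=u$ in the infimum), so no appeal to the paper's first theorem is needed here. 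In the general case the paper actually leaves $\K'$ as an explicit infimum over $u$ (Corollary~\ref{g2-est-cor}) rather than carrying out Cauchy--Schwarz termwise; the latter is done only for the named special cases.
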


The Bakry-{\'E}mery condition is   particularly useful and mostly applied in cases where the Markov diffusion is reversible w.r.t. some measure $m$ on the state space $X$ and where
 ${\LL}$ is the generator of the Dirichlet form
$\E(u)=\int \Gamma(u,u)\,dm$ on $L^2(X,m)$.
In this framework, most of the previous examples for transformations of generators can be obtained as generators ${\LL}'$ of
Dirichlet forms
\begin{equation}\label{dir-form}
\E'(u)=\int \Gamma(u,u)\,\phi^2\,dm\quad\mbox{on }L^2(X,\rho^2\,m)
\end{equation}
for suitable choices of the weights $\phi$ and $\rho$:
time change ($\phi=1$, $\rho=1/f$), drift transformation/Doob transformation ($\phi=\rho=e^{h/2}$), metric transformation ($\rho=1$, $\phi=f$), conformal transformation ($\rho=f^{-N/2}$, $\phi=f^{-N/2+1}$).

The study of
Bakry-{\'E}mery estimates for such Dirichlet forms is closely related to the analysis of curvature bounds in the sense of Lott-Sturm-Villani for metric measure spaces.
We say that a mms $(X,d,m)$ satisfies the \emph{entropic curvature-dimension condition} $\CD^e(\KK,N)$ for (extended) real parameters $\KK$ and $N$ if the Boltzmann entropy $S$ (with $S(\mu)=\int\rho\log\rho\,dm$ if $\mu=\rho\,m$) satisfies
\begin{equation}
D^2 S-\frac1N D S\otimes D S\ge K
\end{equation}
in a well-defined, weak sense on the $L^2$-Wasserstein space ${\mathcal P}_2(X)$.

We will analyze  the behavior of the entropic curvature-dimension condition $\CD^e(\KK,N)$ under transformation of the data: changing the measure $m$ into the weighted measure $m'=e^v\,m$ and   changing the length metric $d$ into
 the weighted length metric $d'$ given by
\begin{equation}
d'(x,y)
=\inf\Big\{ \int_0^1 |\dot\gamma_t|\cdot e^{w(\gamma_t)}\,dt: \ \gamma:[0,1]\to X \mbox{ rectifiable, } \gamma_0=x, \gamma_1=y\Big\}.
\end{equation}
We will not treat this problem in full generality but assume that the mms $(X,d,m)$ is {`smooth'} and also that the weights $v$ and $w$ are `smooth'.

\begin{theorem}
 If $(X,d,m)$ satisfies the  $\CD^e(\KK,N)$-condition  then for each  $N'>N$ the metric measure space $(X,d',m')$  satisfies the  $\CD^e(\KK',N')$-condition
 with a number $\KK'$ explicitly given in terms of $\K$, $v$ and $w$.
 For instance, if $v=0$
 \begin{eqnarray*}
\KK'= \inf_X \, e^{-2w}\Big[\KK-
{\LL}w+2\Gamma(w)-
\sup_{u\in\A}
\frac1{\Gamma(u)}\Big( \big(\frac{N'\,N}{N'-N}+2\big)\Gamma(w,u)^2-2 {\HH}_{w}(u,u)\Big)\Big].
\end{eqnarray*}
 If $w=0$ also $N=N'=\infty$ is admissible; if $w=\frac1N v$ also $N'=N$ is admissible.
\end{theorem}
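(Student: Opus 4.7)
The plan is to reduce this Lagrangian statement to the Eulerian Bakry-\'Emery transformation result already established in Corollary 1.3, exploiting the equivalence $\CD^e(\KK,N)\Leftrightarrow\BE(\KK,N)$ that holds on smooth metric measure spaces. First I would identify the Dirichlet form and generator attached to $(X,d',m')$. Since the conformal change of length metric with weight $e^w$ rescales the carr\'e du champ by $e^{-2w}$ and the reference measure is multiplied by $e^v$, one obtains
\begin{equation*}
\E'(u)\;=\;\int e^{-2w}\,\Gamma(u,u)\,e^v\,dm\qquad\text{on }L^2(X,e^v m),
\end{equation*}
which fits \eqref{dir-form} with $\phi^2=e^{v-2w}$, $\rho^2=e^v$, hence $f:=\phi/\rho=e^{-w}$. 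A short integration by parts then gives
\begin{equation*}
\LL' u \;=\; e^{-2w}\bigl[\LL u+\Gamma(v-2w,u)\bigr],
\end{equation*}
which fits \eqref{Lsharp} with $f=e^{-w}$ and a single drift term built from $h=v-2w$. The two refinements in the statement become transparent in this presentation: $v=0$ makes $\LL'$ a metric transformation, while $w=v/N$ makes it the conformal transformation, for which Theorem 1.2 delivers an \emph{equality} and therefore permits $N'=N$.

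The smoothness of $(X,d,m)$ lets me upgrade $\CD^e(\KK,N)$ to $\BE(\KK,N)$ for $\LL$. Corollary 1.3 applied to $\LL'$ then yields $\BE(\KK',N')$ with an explicit $\KK'$, and a final application of $\BE(\KK',N')\Rightarrow\CD^e(\KK',N')$ on the smooth space $(X,d',m')$ closes the argument. The explicit form of $\KK'$ is obtained most cleanly by factoring the transformation as a conformal piece (which Theorem 1.2 treats with equality, contributing the Hessian term $\HH_{w}(u,u)$ together with the coefficient $(N-2)(N'-2)/(N'-N)$) followed by a drift (contributing a further quadratic-in-$\Gamma(h,\cdot)$ term plus the extra dimensional factor needed to absorb it into the $N'$-Ricci tensor). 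Summing the two contributions, dividing by $\Gamma'(u)=e^{-2w}\Gamma(u)$, and optimizing over $u\in\A$ produces the announced supremum; the combined coefficient $N'N/(N'-N)+2$ arises from adding the conformal and drift dimensional weights after normalization.

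The main obstacle I foresee is precisely this algebraic dovetailing of the dimensional inflations. I would double-check it by computing $\KK'$ two ways: once directly by invoking the general formula of Theorem 1.2 on \eqref{Lsharp}, and once by composing the conformal and drift transformations separately and then tracking how the $\frac1{N'-N}$-factors combine. The two admissibility refinements then serve as sanity checks: $w=0$ removes the conformal part so that no dimensional inflation is generated (hence $N=N'=\infty$ is permitted), and $w=v/N$ places the whole transformation in the equality regime of Theorem 1.2 (hence $N'=N$ is permitted without any loss from going from $N$ to $N'$).
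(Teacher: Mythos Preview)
Your high-level strategy is exactly the paper's: pass from $\CD^e(\KK,N)$ to $\BE(\KK,N)$ for $\LL$, apply the transformation result to obtain $\BE(\KK',N')$ for $\LL'=e^{-2w}[\LL+\Gamma(v-2w,\cdot)]$, and return to $\CD^e(\KK',N')$ for $(X,d',m')$. The paper does not factor through conformal\,+\,drift, however; it applies Corollary~\ref{cor-g2-est} directly with $f=e^{-w}$ and $h=v-2w$, which yields the general formula \eqref{mms-N} in one stroke and avoids the ``algebraic dovetailing'' you anticipate as the main obstacle. The specialization to $v=0$ and the coefficient $\tfrac{N'N}{N'-N}+2$ then drop out by straightforward simplification rather than by combining two separate dimensional weights.

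There is one genuine gap in your outline: the final implication $\BE(\KK',N')\Rightarrow\CD^e(\KK',N')$ requires that $(X,d',m')$ be a smooth mms, and this is not automatic. The paper spends real effort here, verifying (i) that the intrinsic metric of $\E'$ coincides with the weighted length metric $d'$ of \eqref{d-weighted} (by computing $\hat\Gamma'(\cdot)=e^{-2w}\hat\Gamma(\cdot)$ and the induced length functional), and (ii) that $\A$ remains dense in $\Dom((-\LL')^{3/2})$ (this is Lemma~\ref{D3dense}, whose proof is nontrivial). Without these two checks the equivalence Lemma~\ref{bochner-lemma} cannot be invoked on the transformed side, so you should not treat smoothness of $(X,d',m')$ as given.
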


From the very beginning, in theoretical studies and applications of the Bakry-{\'E}mery condition and of the Lott-Sturm-Villani condition, their transformation behavior under drift transformation was well studied and widely used \cite{Bak-Eme}, \cite{Stu1,Stu2,LV}.
As far as we know, however, until now for none of the other transformations of diffusion operators a transformation result for the
Bakry-{\'E}mery condition or for the Lott-Sturm-Villani condition existed.

\tableofcontents

\section{Diffusion Operators and Ricci Tensors}

\subsection{The $\Gamma$-Operator and the Hessian}

Our setting will be the following (cf. \cite{Bak94, Led2}): ${\LL}$ will be a linear operator, defined on an algebra $\A$ of functions on a set $X$ such that ${\LL}(\A)\subset\A$.
(No topological or measurability assumptions on $X$ are requested, no measure is involved.)
In terms of these data we define the square field operator
$\Gamma(f,g)=\frac12[{\LL}(fg)-f{\LL}g-g{\LL}f]$.
We assume that  ${\LL}$ is a \emph{diffusion operator}  in the sense that
\begin{itemize}
\item $\Gamma(f,f)\ge0$ for all $f\in\A$
\item $\psi(f_1,\ldots,f_r)\in\A$ for every $r$-tuple of functions $f_1,\ldots,f_r$ in $\A$ and every $C^\infty$-function $\psi:\R^r\to\R$ vanishing at the origin and
\begin{equation}\label{diff}
{\LL}\psi(f_1,\ldots,f_r)=\sum_{i=1}^r \psi_i(f_1,\ldots,f_r)\cdot {\LL}f_i + \sum_{i,j=1}^r \psi_{ij}(f_1,\ldots,f_r)\cdot \Gamma(f_i,f_j)
\end{equation}
where $\psi_i:=\frac{\partial}{\partial y_i}\psi$ and $\psi_{ij}:=\frac{\partial^2}{\partial y_i\,\partial y_j}\psi$.
\end{itemize}
We
define the Hessian of $f$ at a point $x\in X$ as a bilinear form on $\A$ by
$${\HH}_f(g,h)(x)=\frac12\Big[\Gamma\big(g,\Gamma(f,h)\big)+\Gamma\big(h,\Gamma(f,g)\big)-\Gamma\big(f,\Gamma(g,h)\big)\Big](x)$$
for $f,g,h\in\A$.
We can  always extend the definition of ${\LL}$ and $\Gamma$ to the algebra  generated by the elements in $\A$ and the constant functions which leads to ${\LL}1=0$ and $\Gamma(1,f)=0$ for all $f$.
For later use, let us state the chain rule  for ${\LL}f$ and ${\HH}_f$:
\begin{eqnarray*}\frac1p f^{-p}{\LL}f^p={\LL} \log f+p \Gamma(\log f),\qquad
\frac1p f^{-p}{\HH}_{f^p}(u,u)= {\HH}_{ \log f}(u,u)+p \Gamma(\log f,u)^2
\end{eqnarray*}
for all $p\not=0$ and all $f\in\A$ with $\log f$ and $f^p\in\A$.

\subsection{The $\Gamma_2$-Operator}

Of particular importance is the $\Gamma_2$-operator defined via iteration of the $\Gamma$-operator
$$\Gamma_2(f,g)=\frac12[{\LL}\Gamma(f,g)-\Gamma(f,{\LL}g)-\Gamma(g,{\LL}f)].$$
 We put $\Gamma(f)=\Gamma(f,f), \Gamma_2(f)=\Gamma_2(f,f)$.

\begin{lemma}[\cite{Bak94},\cite{Led2}]\label{chain-rule} Given $f_1,\ldots,f_r$ in $\A$ and a $C^\infty$-function $\psi:\R^r\to\R$.
Then
$$\Gamma(\psi(f_1,\ldots,f_r))=\sum_{i,j=1}^r [\psi_i\cdot\psi_j](f_1,\ldots,f_r)\cdot \Gamma(f_i,f_j)$$ and
\begin{eqnarray*}
\Gamma_2(\psi(f_1,\ldots,f_r))&=&\sum_{i,j}[\psi_i\cdot \psi_j](f_1,\ldots,f_r)\cdot \Gamma_2(f_i,f_j)\\
&+& 2\sum_{i,j,k}[\psi_i\cdot \psi_{jk}](f_1,\ldots,f_r)\cdot {\HH}_{f_i}(f_j,f_k)\\
&
+&\sum_{i,j,k,l}[\psi_{ij}\cdot \psi_{kl}](f_1,\ldots,f_r)\cdot \Gamma(f_i,f_k)\cdot\Gamma(f_j,f_l).
\end{eqnarray*}
\end{lemma}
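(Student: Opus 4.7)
The first identity is the standard chain rule for $\Gamma$. I will obtain it by computing $\LL\bigl(\psi(f)^2\bigr)$ in two ways. The product rule $\LL(AB)=A\LL B+B\LL A+2\Gamma(A,B)$ gives $2\psi(f)\LL\psi(f)+2\Gamma(\psi(f))$, while the diffusion property \eqref{diff} applied to the smooth function $\psi^2$, with $(\psi^2)_i=2\psi\psi_i$ and $(\psi^2)_{ij}=2\psi_i\psi_j+2\psi\psi_{ij}$, produces a second expression; using \eqref{diff} once more to evaluate $\LL\psi(f)$ itself, the $2\psi(f)\LL\psi(f)$ contributions match and the $2\psi\psi_{ij}$ terms cancel, leaving $\Gamma(\psi(f))=\sum_{i,j}\psi_i\psi_j\,\Gamma(f_i,f_j)$. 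Standard polarization over two disjoint sets of inputs upgrades this to the bilinear version $\Gamma(\psi(f),h)=\sum_i\psi_i\,\Gamma(f_i,h)$ for arbitrary $h\in\A$, which I will use freely below.

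For the second identity, I work from the expansion $\Gamma_2(u)=\tfrac12\LL\Gamma(u)-\Gamma(u,\LL u)$ with $u=\psi(f_1,\dots,f_r)$, writing $\psi_i,\psi_{ij},\ldots$ for partial derivatives of $\psi$ evaluated at $(f_1,\dots,f_r)$. The calculation reduces to three expansions: $\LL u=\sum_i\psi_i\LL f_i+\sum_{i,j}\psi_{ij}\Gamma(f_i,f_j)$ by \eqref{diff} and $\Gamma(u)=\sum_{i,j}\psi_i\psi_j\Gamma(f_i,f_j)$ by the chain rule above; then $\Gamma(u,\LL u)$ expanded by pulling $\psi_k$ out of the first slot, distributing $\Gamma(f_k,\cdot)$ in the second slot via Leibniz, and evaluating the residual $\Gamma(f_k,\psi_i)$ and $\Gamma(f_k,\psi_{ij})$ by the chain rule; finally $\LL\Gamma(u)=\LL\bigl(\sum_{i,j}\psi_i\psi_j\Gamma(f_i,f_j)\bigr)$ expanded via the product rule for $\LL$, with $\LL(\psi_i\psi_j)$ obtained from \eqref{diff} applied to the smooth function $\psi_i\psi_j$, and $\Gamma(\psi_i\psi_j,\Gamma(f_i,f_j))$ obtained from Leibniz and the chain rule.

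Forming $\tfrac12\LL\Gamma(u)-\Gamma(u,\LL u)$ and collecting terms according to the pattern of $\psi$-derivatives, I expect four groups. The pure $\psi_i\psi_j$-coefficients assemble the three terms $\LL\Gamma(f_i,f_j)$, $\Gamma(f_i,\LL f_j)$, $\Gamma(f_j,\LL f_i)$ in precisely the combination defining $\Gamma_2(f_i,f_j)$; the pure $\psi_{ij}\psi_{kl}$-coefficients come only from $\LL\Gamma(u)$ and form $\Gamma(f_i,f_k)\Gamma(f_j,f_l)$ directly; the mixed $\psi_i\psi_{jk}$-coefficients need symmetrization in the indices $j,k$ (using $\psi_{jk}=\psi_{kj}$) to appear as the Hessian combination $\Gamma(f_j,\Gamma(f_i,f_k))+\Gamma(f_k,\Gamma(f_i,f_j))-\Gamma(f_i,\Gamma(f_j,f_k))=2\HH_{f_i}(f_j,f_k)$, accounting for the factor $2$ in the claim; finally the third-derivative terms $\psi_{ijk}$ cancel pairwise between the two sides, a cancellation forced by the fact that the end result can only depend on the two-jet of $\psi$.

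The main obstacle is the bookkeeping of the mixed group: several independent contributions to $\Gamma\bigl(f_i,\Gamma(f_j,f_k)\bigr)$-type expressions arise---one from Leibniz acting on the $\psi_{ij}\Gamma(f_i,f_j)$ part of $\LL u$ inside $\Gamma(u,\LL u)$, another from $\Gamma(\psi_i\psi_j,\Gamma(f_i,f_j))$ inside $\LL\Gamma(u)$---and they must be assembled with the correct signs and relabellings before the Hessian combination is visible. The $\psi_i\psi_j$ group and the third-derivative cancellation are comparatively routine, and the $\psi_{ij}\psi_{kl}$ group is immediate.
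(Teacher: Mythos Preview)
Your plan is correct and is precisely the standard direct computation one finds in the cited references \cite{Bak94,Led2}: expand $\tfrac12\LL\Gamma(u)-\Gamma(u,\LL u)$ using the diffusion property \eqref{diff}, the Leibniz rule, and the bilinear chain rule for $\Gamma$, then regroup according to the derivative pattern of $\psi$. The paper itself gives no proof of this lemma (it is stated with attribution only), so there is nothing to compare; your identification of the four groups---the $\psi_i\psi_j$ terms assembling $\Gamma_2(f_i,f_j)$, the $\psi_{ij}\psi_{kl}$ terms coming solely from $\LL\Gamma(u)$, the mixed $\psi_i\psi_{jk}$ terms symmetrized into $2\HH_{f_i}(f_j,f_k)$, and the cancelling third-order $\psi_{ijk}$ terms---is exactly the bookkeeping that the computation requires.
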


\begin{remark}
We say that the family $\{f_1,\ldots,f_n\}$ is an $n$-dimensional \emph{normal coordinate system} at a given point $x\in X$ if
for all $i,j,k\in \{1,\ldots,n\}$
$$\Gamma(f_i,f_j)(x)=\delta_{ij}, \quad {\HH}_{f_i}(f_j,f_k)(x)=0, \quad {\LL}f_i(x)=0.$$
 Given such a system at the point $x\in X$, for each
 $C^\infty$-function $\psi:\R^n\to\R$ %vanishing at the origin 
 and $f=(f_1,\ldots,f_n)$
 we have
 \begin{eqnarray}\label{bochner-formula}
\Gamma_2(\psi\circ f)(x) =
\Ric^\sharp(D\psi,D\psi)(f(x)) + \| D^2{\psi}\|^2_{HS}(f(x))
\end{eqnarray}
where
$\Ric^\sharp(D\psi,D\psi)(f):=\sum_{i,j=1}^n \psi_i(f)\, \psi_j(f)\, \Gamma_2(f_i,f_j)$
  and $\|D^2{\psi}\|^2_{HS}:=\sum_{i,j=1}^n |\frac{\partial^2}{\partial y_i\,\partial y_j}\psi|^2$.
Note that
 \begin{eqnarray*}\| D^2{\psi}\|^2_{HS}(f(x))&\ge&\frac1n \Big(\sum_{i=1}^n\frac{\partial^2}{\partial y_i^2}\psi\Big)^2\big(f(x)\big)\ =\ \frac1n\Big({\LL}\big(\psi\circ f\big)\Big)^2(x).
\end{eqnarray*}
\end{remark}

\subsection{The Ricci Tensor}

In terms of the $\Gamma_2$-operator we define the \emph{Ricci tensor} at the point $x\in X$ by
\begin{eqnarray}
\RR(f)(x)=\inf\big\{ \Gamma_2(\tilde f)(x): \ \tilde f\in\A, \ \Gamma(\tilde f-f)(x)=0\big\}
\end{eqnarray}
for $f\in\A$.
More generally, given any extended number $N\in[1,\infty]$ we define the \emph{$N$-Ricci tensor} at $x$ by
\begin{eqnarray}
\RR_N(f)(x)=\inf\big\{ \Gamma_2(\tilde f)(x)-\frac1N(\LL\tilde f)^2(x): \  \tilde f\in\A, \ \Gamma(\tilde f-f)(x)=0\big\}.
\end{eqnarray}
Obviously, for $N=\infty$ this yields the previously defined Ricci tensor.
Moreover, $\Gamma_2(f)\ge \RR_N(f)+\frac1N (\LL f)^2$ and $\RR(f)\ge \RR_N(f)$
 for all $N$.
One might be tempted to believe that $\RR_N(f)\ge\RR(f)-\frac1N (\LL f)^2$ but this is not true in general, see
Proposition \ref{mani-ric} below.

\begin{lemma}
For every $N\in[1,\infty]$ and every $x\in X$ the $N$-Ricci tensor is a quadratic form on $\A$.
Thus by polarization it extends to a bilinear form $\RR_N(.,.)(x)$ on its \emph{domain}
$\Dom\big(\RR_N(x)\big)=\{f\in\A: \, \RR_Nf(x)>-\infty\}$.
\end{lemma}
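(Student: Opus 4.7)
\medskip

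\noindent\textbf{Proof plan.}
The strategy is to rewrite $\RR_N(f)(x)$ as the infimum of a fixed quadratic form on the affine subspace $f+V$, where $V$ is the common constraint set, and then derive homogeneity and the parallelogram identity directly from the corresponding properties of the underlying quadratic form together with a bijective change of variables.

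\medskip

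\noindent\textbf{Step 1: the constraint set $V$ is a linear subspace.} Define
\[
V=\{h\in\A:\ \Gamma(h)(x)=0\},\qquad Q(\tilde f):=\Gamma_2(\tilde f)(x)-\tfrac1N(\LL\tilde f)^2(x).
\]
Since $\Gamma(\cdot,\cdot)(x)$ is a non-negative symmetric bilinear form on $\A$, the Cauchy--Schwarz inequality $|\Gamma(h_1,h_2)(x)|\le\sqrt{\Gamma(h_1)(x)\,\Gamma(h_2)(x)}$ yields $\Gamma(h_1+h_2)(x)=0$ whenever $h_1,h_2\in V$. Together with the obvious closure under scalar multiplication, this shows that $V$ is a linear subspace, and hence
\[
\RR_N(f)(x)=\inf_{h\in V} Q(f+h).
\]
Symmetry of $\Gamma_2(\cdot,\cdot)(x)$ and the fact that $(\LL\tilde f)(x)$ is linear in $\tilde f$ further guarantee that $Q$ is a quadratic form on $\A$, with associated symmetric bilinear form $B(\tilde f_1,\tilde f_2)=\Gamma_2(\tilde f_1,\tilde f_2)(x)-\tfrac1N(\LL\tilde f_1)(x)(\LL\tilde f_2)(x)$.

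\medskip

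\noindent\textbf{Step 2: homogeneity.} For $\lambda\neq 0$, the map $h\mapsto\lambda h$ is a bijection of $V$, so
\[
\RR_N(\lambda f)(x)=\inf_{h\in V}Q(\lambda f+h)=\inf_{h'\in V}Q(\lambda(f+h'))=\lambda^2\inf_{h'\in V}Q(f+h')=\lambda^2\RR_N(f)(x).
\]
For $\lambda=0$, taking $h=0\in V$ gives $\RR_N(0)(x)\le Q(0)=0$, and conversely scaling shows $\RR_N(0)(x)\in\{0,-\infty\}$, both of which agree with the quadratic-form requirement $\RR_N(0)(x)=0\cdot\RR_N(0)(x)$ (the value $-\infty$ simply signals that $0$ does not lie in the domain on which the bilinear extension is defined).

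\medskip

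\noindent\textbf{Step 3: parallelogram identity via a bijective change of variables.} Given $f,g\in\A$, independent infima give
\[
\RR_N(f)(x)+\RR_N(g)(x)=\inf_{h_1,h_2\in V}\bigl[Q(f+h_1)+Q(g+h_2)\bigr].
\]
Applying the parallelogram identity for the quadratic form $Q$ to the pair $(a,b)=(f+h_1,g+h_2)$ yields
\[
Q(f+h_1)+Q(g+h_2)=\tfrac12\bigl[Q((f+g)+(h_1+h_2))+Q((f-g)+(h_1-h_2))\bigr].
\]
Since $V$ is a subspace and the linear map $(h_1,h_2)\mapsto(k_1,k_2):=(h_1+h_2,h_1-h_2)$ is a bijection of $V\times V$, minimizing over $(h_1,h_2)$ is the same as minimizing independently over $(k_1,k_2)$, giving
\[
\RR_N(f)(x)+\RR_N(g)(x)=\tfrac12\bigl[\RR_N(f+g)(x)+\RR_N(f-g)(x)\bigr],
\]
i.e.\ the parallelogram law. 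Combined with Step 2, this identifies $\RR_N(\cdot)(x)$ as a quadratic form on $\A$ (extended-real valued); its polarization then defines a genuine bilinear form on $\Dom(\RR_N(x))$, where all quantities are finite.

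\medskip

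\noindent The only subtle point is the correct bookkeeping when some of the infima equal $-\infty$: because the reasoning proceeds through bijections of the parameter space, the identities persist with the standard convention $-\infty+a=-\infty$, and on the finiteness domain $\Dom(\RR_N(x))$ no such issue arises.
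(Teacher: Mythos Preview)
Your proof is correct and rests on the same core observation as the paper's: writing $\RR_N(f)(x)=\inf_{h\in V}Q(f+h)$ where $Q=\Gamma_2(\cdot)(x)-\tfrac1N(\LL\cdot)^2(x)$ is a quadratic form and $V=\{\Gamma(\cdot)(x)=0\}$ is a linear subspace, so that the parallelogram law for $Q$ descends to $\RR_N$.

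The execution differs, however. The paper argues by $\epsilon$-approximation: pick near-minimizers $\tilde f,\tilde g$ for $f,g$, apply the parallelogram identity to $Q$ at $(\tilde f,\tilde g)$, and obtain only the one-sided inequality $\RR_N(f+g)+\RR_N(f-g)\le 2\RR_N(f)+2\RR_N(g)$; the reverse direction and homogeneity are left implicit. Your bijection trick $(h_1,h_2)\mapsto(h_1+h_2,h_1-h_2)$ on $V\times V$ is cleaner: it yields the full parallelogram \emph{identity} in one stroke, and you also spell out homogeneity and the $-\infty$ bookkeeping. What the paper's approach buys is brevity (no need to verify the bijection or split infima), while yours buys completeness and a slightly sharper conclusion. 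Both are valid routes to the same lemma.
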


\begin{proof}
It suffices to prove the parallelogram inequality. Given $x\in X$, $f,g\in\Dom\big(\RR_N(x)\big)$  and $\epsilon>0$, choose
$\tilde f, \tilde g\in\A$ with $\Gamma(f-\tilde f)(x)=\Gamma(g-\tilde g)(x)=0$ such that
$\RR_N(f)(x)\ge \RR_N^0(\tilde f)(x)-\epsilon$ and $\RR_N(g)(x)\ge \RR_N^0(\tilde g)(x)-\epsilon$. Here
we put
$$\RR_N^0(h)(x)=\Gamma_2(h)(x)-\frac1N(\LL h)^2(x)$$
which obviously defines a quadratic form in $h\in\A$. Thus
 \begin{eqnarray*}
 \RR_N(f+g)(x)+\RR_N(f-g)(x)&\le&
  \RR_N^0(\tilde f+\tilde g)(x)+\RR_N^0(\tilde f-\tilde g)(x)\\
  & =&
   2\RR_N^0(\tilde f)(x)+2\RR_N^0(\tilde g)(x)\\
   &\le&
    2\RR_N( f)(x)+2\RR_N( g)(x)+4\epsilon.
 \end{eqnarray*}
 Since $\epsilon>0$ was arbitrary, this proves the claim.
\end{proof}

 Let us illustrate the concept of the $N$-Ricci tensor with two basic examples.

\begin{proposition}\label{mani-ric} Let $L$ be the Laplace-Beltrami operator on a complete $n$-dimensional Riemannian manifold (and let $\A$ be the set of ${\mathcal C}^\infty$-functions vanishing at infinity or the set of ${\mathcal C}^\infty$-functions with compact supports -- this makes no difference as long as no measure is involved). Then for all $f\in\A$
\begin{equation} \RR_N(f)=\left\{
\begin{array}{ll}
\Ric(\nabla f,\nabla f)& \mbox{if }N\ge n,\\
-\infty &\mbox{if }N<n.
\end{array}\right.
 \end{equation}
\end{proposition}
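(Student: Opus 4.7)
The plan is to reduce the infimum defining $\RR_N(f)(x)$ to a finite-dimensional optimization over symmetric matrices, by exploiting the classical Bochner formula and the freedom to prescribe the 2-jet of a smooth function at a single point.

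First, I would recall the classical Bochner identity on a Riemannian manifold: for $\LL=\Delta$ one has $\Gamma(u)=|\nabla u|^2$ and
\begin{equation*}
\Gamma_2(\tilde f)=\Ric(\nabla\tilde f,\nabla\tilde f)+\|\Hess\tilde f\|_{HS}^2,\qquad \LL\tilde f=\tr\Hess\tilde f.
\end{equation*}
The constraint $\Gamma(\tilde f-f)(x)=0$ is equivalent to $\nabla\tilde f(x)=\nabla f(x)$, so the Ricci term is the same for every admissible competitor and equals $\Ric(\nabla f,\nabla f)(x)$. Hence
\begin{equation*}
\RR_N(f)(x)=\Ric(\nabla f,\nabla f)(x)+\inf_{\tilde f}\Big[\|\Hess\tilde f\|_{HS}^2(x)-\tfrac1N(\tr\Hess\tilde f)^2(x)\Big],
\end{equation*}
with the infimum taken over $\tilde f\in\A$ satisfying $\nabla\tilde f(x)=\nabla f(x)$.

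Second, I would show that the value $\Hess\tilde f(x)$ can be an arbitrary symmetric $n\times n$ matrix $H$ while $\nabla\tilde f(x)$ is kept equal to $\nabla f(x)$. In normal coordinates $(y_1,\dots,y_n)$ centered at $x$, set
\begin{equation*}
\psi(y)=f(x)+\sum_i \partial_i f(x)\,y_i+\tfrac12\sum_{i,j}H_{ij}\,y_iy_j,
\end{equation*}
and let $\tilde f=\chi\cdot(\psi-f(x))+f(x)$, where $\chi$ is a compactly supported bump with $\chi\equiv1$ near $x$ and (hence) $\nabla\chi(x)=0$, $\Hess\chi(x)=0$. A direct computation gives $\tilde f(x)=f(x)$, $\nabla\tilde f(x)=\nabla f(x)$, and $\Hess\tilde f(x)=H$ (with respect to the chosen normal frame). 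This places the admissible values of $\Hess\tilde f(x)$ in bijection with the full space of symmetric matrices.

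Finally, I would solve the resulting linear-algebra problem: minimize $\|H\|_{HS}^2-\tfrac1N(\tr H)^2$ over symmetric $H$. Decomposing $H=H_0+\tfrac{\tr H}{n}I$ with $H_0$ trace-free yields
\begin{equation*}
\|H\|_{HS}^2-\tfrac1N(\tr H)^2=\|H_0\|_{HS}^2+\Big(\tfrac1n-\tfrac1N\Big)(\tr H)^2.
\end{equation*}
If $N\ge n$ both summands are nonnegative, the infimum is $0$ (attained at $H=0$), and $\RR_N(f)(x)=\Ric(\nabla f,\nabla f)(x)$. If $N<n$ the coefficient $\tfrac1n-\tfrac1N$ is strictly negative, so taking $H=tI$ gives $\|H\|_{HS}^2-\tfrac1N(\tr H)^2=n(1-\tfrac{n}{N})t^2\to-\infty$, proving $\RR_N(f)(x)=-\infty$.

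The main obstacle I anticipate is a clean verification that the $C^\infty_c$ (or vanishing-at-infinity) algebra $\A$ contains functions realizing every prescribed 2-jet at $x$; once this is in place, the rest is the Bochner identity plus an elementary trace-versus-Hilbert-Schmidt inequality.
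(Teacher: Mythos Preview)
Your argument is correct and essentially the same as the paper's: both use Bochner's formula to fix the Ricci term and then optimize over the Hessian at $x$, the paper via the specific competitors $f_0$ (with $D^2 f_0(x)=0$) and $f_j=f_0+j\cdot\tfrac12 d(x,\cdot)^2$, you via the cleaner matrix minimization $\|H\|_{HS}^2-\tfrac1N(\tr H)^2=\|H_0\|_{HS}^2+(\tfrac1n-\tfrac1N)(\tr H)^2$. One cosmetic fix for the obstacle you anticipated: your $\tilde f=\chi(\psi-f(x))+f(x)$ equals the constant $f(x)$ off $\mathrm{supp}\,\chi$ and hence need not lie in $\A=C^\infty_c$; simply drop the additive constant (it affects neither $\Gamma_2(\tilde f)$ nor $\LL\tilde f$, and the constraint $\nabla\tilde f(x)=\nabla f(x)$ is preserved) to obtain a genuine competitor in $\A$.
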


\begin{proof}
The well-known Bochner's formula states that
\begin{equation}\label{boch-form}\Gamma_2(f)=\Ric(\nabla f,\nabla f)+\big\| D^2 f\big\|^2_{HS}
\end{equation}
(cf. formula \eqref{bochner-formula}) and
 Bochner's inequality states that
 \begin{equation*}\Gamma_2(f)\ge\Ric(\nabla f,\nabla f)+\frac1n (\Delta f)^2.
\end{equation*}
Given $f$ and $x$, applying the latter to $\tilde f\in\A$ with $\nabla f(x)=\nabla\tilde f(x)$ yields for all $N\ge n$
$$\Gamma_2(\tilde f)(x)-\frac1N (\Delta \tilde f)^2(x)\ge \Ric(\nabla \tilde f,\nabla \tilde f)(x)=\Ric(\nabla f,\nabla f)(x)$$
and thus
$$\RR_Nf(x)\ge \Ric(\nabla f,\nabla f)(x).$$
Conversely, given $f$ and $x$ choose $f_0$ with $\nabla f_0(x)=\nabla f(x)$ and $D^2{f_0}(x)=0$.
Then \eqref{boch-form} implies
$$\Gamma_2(f_0)(x)-\frac1n (\Delta f_0)^2(x) =\Ric(\nabla f_0,\nabla f_0)(x)=\Ric(\nabla f,\nabla f)(x)$$
and therefore $\RR_N(f)(x)\le \Ric(\nabla f,\nabla f)(x)$.

To verify the second claim, for given $x\in\A$ and $f\in\A$, consider the functions
$f_j=f_0+j\cdot v$ with $f_0$ as above and with $v=\frac12 d(x,.)^2$. (More precisely, choose $v\in\A$ with $v=\frac12 d(x,.)^2$ in a neighborhood of $x$.)
Note that $\nabla v(x)=0$, $\Delta v(x)=n$ and $\big\| D^2 v\big\|^2_{HS}(x)=n$.
Thus  $\nabla f(x)=\nabla f_j(x)=0$ and
$$\Gamma_2(f_j)(x)-\frac1N(\Delta f_j)^2(x)=\Ric(\nabla f,\nabla f)(x)+j^2\,n-\frac{j^2\, n^2}N$$
for every $j\in\N$.
As $j\to\infty$ this proves $\RR_N(f)(x)=-\infty$ whenever $N<n$.
\end{proof}

\begin{proposition}\label{drift-ric} Let $\LL=\Delta+Z$ be the Laplace-Beltrami operator with drift on a complete $n$-dimensional Riemannian manifold where $Z$ is any smooth vector field.  Then
 \begin{equation}\RR_\infty(f)=\Ric(\nabla f,\nabla f)-(D Z)(\nabla f,\nabla f)
\end{equation}
for all $f\in\A$ and
\begin{equation} \RR_N(f)(x)=\left\{
\begin{array}{ll}
\RR_\infty(f)(x)
-\frac1{N-n}\big|Z\, f\big|^2(x)\quad&\mbox{if } N>n,\\
\RR_\infty(f)(x)&
\mbox{if }N=n \mbox{ and }(Z\, f)(x)=0,\\
-\infty&\mbox{if } N=n\mbox{ and }(Z\, f)(x)\not=0,\\
-\infty& \mbox{if }N<n.
\end{array}\right.
 \end{equation}
\end{proposition}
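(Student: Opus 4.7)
The plan is to reduce everything to the Riemannian Bochner formula (already used in Proposition \ref{mani-ric}) plus a finite-dimensional minimization.

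\smallskip

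\emph{Step 1: transformation of $\Gamma$ and $\Gamma_2$ under drift.} Because $Z$ is a first-order operator, a direct computation from the definitions gives $\Gamma^{\LL}(f,g)=\Gamma^{\Delta}(f,g)=\langle\nabla f,\nabla g\rangle$ (the drift cancels in the Leibniz-type formula for $\Gamma$). Hence the constraint $\Gamma(\tilde f-f)(x)=0$ in the infimum is exactly $\nabla\tilde f(x)=\nabla f(x)$. For $\Gamma_2$, expanding the definition yields
\begin{eqnarray*}
\Gamma_2^{\LL}(f)=\Gamma_2^{\Delta}(f)+\tfrac12\bigl[Z\Gamma(f)-2\Gamma(f,Zf)\bigr].
\end{eqnarray*}
Writing $Zf=\langle Z,\nabla f\rangle$, using symmetry of the Hessian ($\langle\nabla_Z\nabla f,\nabla f\rangle=\langle\nabla_{\nabla f}\nabla f,Z\rangle$), the bracket collapses to $-2\,(DZ)(\nabla f,\nabla f)$. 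Combining with the Riemannian Bochner formula \eqref{boch-form} gives
\begin{eqnarray*}
\Gamma_2^{\LL}(\tilde f)(x)=\Ric(\nabla\tilde f,\nabla\tilde f)(x)-(DZ)(\nabla\tilde f,\nabla\tilde f)(x)+\|D^2\tilde f\|_{HS}^2(x).
\end{eqnarray*}

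\smallskip

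\emph{Step 2: reduction to a matrix minimization.} Under the constraint $\nabla\tilde f(x)=\nabla f(x)$, the first two terms above are fixed and equal $\RR_\infty(f)(x)$. Also $\LL\tilde f(x)=\tr D^2\tilde f(x)+(Zf)(x)$, since $Z\tilde f(x)=\langle Z(x),\nabla f(x)\rangle=(Zf)(x)$. Setting $H:=D^2\tilde f(x)$ (an arbitrary symmetric $n\times n$ matrix, as can be arranged by choosing $\tilde f$ to be a quadratic polynomial in normal coordinates) and $a:=(Zf)(x)$, the problem becomes
\begin{eqnarray*}
\RR_N(f)(x)=\RR_\infty(f)(x)+\inf_{H=H^t}\Bigl[\|H\|_{HS}^2-\tfrac1N(\tr H+a)^2\Bigr].
\end{eqnarray*}
For $N=\infty$ the infimum is $0$ (take $H=0$), giving the first claimed formula.

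\smallskip

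\emph{Step 3: the matrix minimization.} Decompose $H=\frac{t}{n}I+H_0$ with $\tr H_0=0$, so $\|H\|_{HS}^2=\frac{t^2}{n}+\|H_0\|_{HS}^2$. The expression to minimize becomes $\frac{t^2}{n}+\|H_0\|_{HS}^2-\frac1N(t+a)^2$. The $H_0$-part is minimized by $H_0=0$, as long as $\|H_0\|_{HS}^2\ge 0$ is not counteracted. For $N>n$, optimizing in $t\in\R$ gives $t=\frac{na}{N-n}$ and the minimum equals $-\frac{a^2}{N-n}$, yielding the second formula. For $N=n$, the expression reduces to $-\frac{1}{n}(2at+a^2)$, which is $0$ if $a=0$ and tends to $-\infty$ otherwise (take $t\to\pm\infty$ with appropriate sign), giving the third and fourth cases. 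For $N<n$, choosing $H=tI$ already gives $nt^2(1-\frac{n}{N})+O(t)\to-\infty$, yielding the last case.

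\smallskip

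The main obstacle is the somewhat delicate computation in Step 1 showing that $\Gamma_2^{\LL}-\Gamma_2^{\Delta}=-(DZ)(\nabla f,\nabla f)$; the rest is a clean reduction to optimization over symmetric matrices, whose solution essentially mirrors the minimization that makes Bochner's inequality $\Delta f)^2\le n\|D^2 f\|_{HS}^2$ sharp.
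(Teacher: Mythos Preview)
Your proof is correct and in fact cleaner than the paper's. The paper establishes the lower bounds $\RR_N(f)(x)\ge(\text{claimed value})$ by forward-referencing the general transformation result of Theorem~\ref{g2-est} (Corollary~\ref{drift-be}), and then verifies the matching upper bounds by hand, choosing test functions $\tilde f=f_0+c\cdot v$ with $v=\tfrac12 d(x,\cdot)^2$ (so $D^2\tilde f(x)=c\,I$) and tuning the scalar $c$ case by case. Your argument is self-contained: you derive the drift-Bochner identity $\Gamma_2^{\LL}(\tilde f)=\Ric(\nabla\tilde f,\nabla\tilde f)-(DZ)(\nabla\tilde f,\nabla\tilde f)+\|D^2\tilde f\|_{HS}^2$ directly, reduce the infimum to $\inf_{H=H^t}\big[\|H\|_{HS}^2-\tfrac1N(\tr H+a)^2\big]$, and then the trace/traceless decomposition makes it transparent both that the optimal $H$ is a multiple of the identity and what the infimum actually is. The two proofs end up testing the same functions (Hessians proportional to $I$), but your framing dispenses with the forward reference and handles the lower and upper bounds simultaneously. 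The paper's route, on the other hand, fits the result into its general machinery for transformed operators. One cosmetic point: in Step~2 you invoke the name $\RR_\infty(f)(x)$ for the fixed part before you have proved the $N=\infty$ case; it would read more linearly to first note that the $N=\infty$ infimum is $0$, deduce the formula for $\RR_\infty$, and only then phrase the finite-$N$ problem as $\RR_\infty(f)(x)+\inf_H[\ldots]$.
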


 \begin{proof}
 The lower estimates for $\RR_N$ will follow from our general estimates in Theorem \ref{g2-est} (cf. also Corollary \ref{drift-be}).
 For the upper estimate in the case $N<n$, for given $f$ and $x$ choose $f_j=f_0+j\cdot v$ as in the proof of the previous  proposition.
 Then
 $$\big\| D^2 f_j\big\|^2_{HS}-\frac1N(\LL f_j)^2(x)=j^2\,n-\frac{1}N\big(j\,n+(Z\,f)(x)\big)^2\to-\infty$$
as $j\to\infty$. Similarly, if $N=n$ and $(Zf)(x)\not=0$ we choose $f_j=f_0+j\cdot(Zf)(x)\cdot v$ to obtain
 $$\big\| D^2 f_j\big\|^2_{HS}-\frac1N(\LL f_j)^2(x)=j^2\,n(Zf)^2(x)-\frac{1}N(jn+1)^2(Z\,f)^2(x)\to-\infty$$
as $j\to\infty$.

If $N>n$ choose $\tilde f=f_0+\frac1{N-n}(Zf)(x)\cdot v$. Then $\nabla\tilde f(x)=\nabla f(x)$, $\Delta \tilde f(x)=\frac{n}{N-n}(Zf)(x)$ and
$\LL \tilde f(x)=\frac N{N-n}(Zf)(x)$. Moreover, $\big\| D^2 \tilde f\big\|^2_{HS}(x)=\frac n{(N-n)^2}(Zf)^2(x)$.
Thus
\begin{eqnarray*}
\RR_N(f)(x)&\le& \Gamma_2(\tilde f)-\frac1N (\LL \tilde f)^2(x)\\
&=&\Ric(\nabla f,\nabla f)(x)+(D Z)(\nabla f,\nabla f)(x)+\big\| D^2 \tilde f\big\|^2_{HS}(x)-\frac1N (\LL \tilde f)^2(x)\\
&=&\Ric(\nabla f,\nabla f)(x)+(D Z)(\nabla f,\nabla f)(x)-\frac1{N-n}(Zf)^2(x).
\end{eqnarray*}
Essentially the same argument works in the case $N=n$ and $(Zf)(x)=0$ if we put $\tilde f=f_0$.
 \end{proof}

\begin{definition}
Given a function $\K: X\to\R$ and an extended number $N\in[1,\infty]$
we say that the operator $({\LL},\A)$  satisfies the
\emph{Bakry-{\'E}mery condition} $\BE(\K,N)$ if
\begin{eqnarray}
  \Gamma_2(f)(x)\ge \K(x)\cdot\Gamma(f)(x)+\frac1N (\LL f)^2(x)
 \end{eqnarray}
 for all $f\in\A$ and all $x\in X$.
\end{definition}
The Bakry-{\'E}mery condition obviously is equivalent to the condition
 \begin{eqnarray}\RR_N(f)(x)\ge \K(x)\cdot\Gamma(f)(x) \end{eqnarray}
for all $f\in\A$ and all $x\in X$.

\section{Fundamental Estimates for $N$-Ricci Tensors and Hessians}

In the classical Riemannian setting
$\Gamma_2(f)=\Ric(\nabla f,\nabla f)+ \| D^2 f\|^2_{HS}$.
The Bakry-{\'E}mery condition $\BE(\K,n)$ relies on the bound for the Ricci tensor $\Ric(\nabla f,\nabla f)\ge k\cdot|\nabla f|^2$ and on the estimate for the Hilbert-Schmidt norm of the Hessian
$\| D^2 f\|^2_{HS}\ge\frac1n (\Delta f)^2$. A more refined analysis might be based on the identity
$$\| D^2 f\|^2_{HS}=\frac 1n (\Delta f)^2+\| D^2 f-\frac1n \Delta f\cdot \mathbb{I} \|^2_{HS}$$
and on estimates for the bilinear form (`traceless Hessian')
$$\Big( D^2 f-\frac1n \Delta f\cdot \mathbb{I}\Big)(g,h)=D^2 f (\nabla g, \nabla h)- \frac1n \Delta f \cdot \nabla g\,\nabla h.$$
For instance, one might use the estimate
$$\|a\|_{HS}^2\ge \frac n{n-1} \, \|a\|_{2}^2$$
valid for any traceless, symmetric $(n\times n)$-matrix $a=(a_{ij})_{1\le i,j\le n}$.

In the abstract setting,
 we  are now going to prove a fundamental estimate for the
 $N$-Ricci tensor in terms of
 the bilinear form
$(g,h)\mapsto {\HH}_f(g,h)-\frac1N \Gamma(g,h)\cdot {\LL}f$.
Recall that by definition $\Gamma_2(f)\ge\RR_N(f)+\frac1N (\LL f)^2$.

\begin{theorem}\label{sharpGamma2} For all $N\in[1,\infty]$ and all $f,g,h\in\A$
\begin{equation}
\Gamma_2(f)\ge\RR_N(f)+\frac1N (\LL f)^2+2
\frac{\Big[ {\HH}_f(g,h)-\frac1N \Gamma(g,h)\cdot {\LL}f\Big]^2}{ \frac{N-2}N\Gamma(g,h)^2+\Gamma(g)\cdot \Gamma(h)}.
\end{equation}
\end{theorem}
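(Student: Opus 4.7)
The plan is to exploit the variational definition of $\RR_N(f)(x)$ by constructing an explicit one-parameter family of admissible perturbations of $f$ and then optimizing over the parameter. For fixed $f,g,h\in\A$ and $x\in X$ I set
$$\phi:=(g-g(x))(h-h(x)),\qquad \tilde f_s:=f+s\phi\quad(s\in\R).$$
Since $\phi$ is the product of two factors vanishing at $x$, the Leibniz rule for $\Gamma$ yields $\Gamma(\phi)(x)=0$, whence $\Gamma(\tilde f_s-f)(x)=s^2\Gamma(\phi)(x)=0$ for every $s$. Therefore each $\tilde f_s$ is admissible in the infimum defining $\RR_N(f)(x)$, and I obtain the universal upper bound
$$\RR_N(f)(x)\ \le\ \Gamma_2(\tilde f_s)(x)-\tfrac{1}{N}(\LL\tilde f_s)^2(x)\qquad(s\in\R).$$

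Expanding the right-hand side by bilinearity of $\Gamma_2$ and of $\LL$ produces a quadratic polynomial $A+2sB+s^2E$ in $s$ whose coefficients I evaluate at $x$. Writing $\tilde g:=g-g(x)$, $\tilde h:=h-h(x)$ and using $\LL(uv)=u\LL v+v\LL u+2\Gamma(u,v)$, I read off $\LL\phi(x)=2\Gamma(g,h)(x)$; the $\Gamma_2$-chain rule of Lemma \ref{chain-rule} applied to $\psi(y_1,y_2)=y_1y_2$ collapses because every surviving $\psi_i$-factor is multiplied by $\tilde g(x)$ or $\tilde h(x)$, leaving $\Gamma_2(\phi)(x)=2\Gamma(g)\Gamma(h)+2\Gamma(g,h)^2$. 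The decisive identity
$$\Gamma_2(f,\phi)(x)=2\HH_f(g,h)(x)$$
is obtained by expanding $\Gamma_2(f,\phi)$ directly from its definition and observing that the terms proportional to $\tilde g$, $\tilde h$, $\LL g$, $\LL h$ cancel at $x$; what remains is precisely $\Gamma(h,\Gamma(f,g))+\Gamma(g,\Gamma(f,h))-\Gamma(f,\Gamma(g,h))=2\HH_f(g,h)$. Hence
$$B=2\bigl[\HH_f(g,h)-\tfrac{1}{N}\Gamma(g,h)\LL f\bigr],\qquad E=2\bigl[\Gamma(g)\Gamma(h)+\tfrac{N-2}{N}\Gamma(g,h)^2\bigr].$$

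Cauchy--Schwarz for $\Gamma$ gives $\Gamma(g,h)^2\le\Gamma(g)\Gamma(h)$, which together with $N\ge 1$ implies $E\ge 0$. When $E>0$, minimizing the universal bound over $s$ at $s^\ast=-B/E$ yields $\RR_N(f)(x)\le A-B^2/E$; rearranging is exactly the claimed inequality with the fraction on the right equal to $B^2/E$. The degenerate case $E=0$ is handled by a standard limiting argument: replace $g$ by $g+\epsilon g_0$ for some $g_0$ with $\Gamma(g_0)(x)>0$, apply the nondegenerate bound, and let $\epsilon\to 0$. I expect the main obstacle to be the identification $\Gamma_2(f,\phi)(x)=2\HH_f(g,h)(x)$; once this has been secured, the remaining scalar optimization is routine, and it is precisely this identity together with $\LL\phi(x)=2\Gamma(g,h)(x)$ that produces the \emph{traceless Hessian} $\HH_f(g,h)-\frac{1}{N}\Gamma(g,h)\LL f$ (and the factor $2$ in front of the fraction) in the final inequality.
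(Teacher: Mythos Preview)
Your proof is correct and essentially identical to the paper's: both perturb $f$ by $t\,(g-g(x))(h-h(x))$, compute $\LL$ and $\Gamma_2$ of the perturbed function at $x$ (the paper by applying the chain rule of Lemma~\ref{chain-rule} to $\psi(y_1,y_2,y_3)=y_1+t(y_2-g(x))(y_3-h(x))$, you by bilinear expansion and a direct verification of $\Gamma_2(f,\phi)(x)=2\HH_f(g,h)(x)$), and then optimize the resulting quadratic in the parameter. Neither proof spells out the degenerate case where the denominator vanishes, so your limiting remark is a small bonus rather than a deviation.
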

Note that $\frac{N-2}N\Gamma(g,h)^2+ \Gamma(g)\cdot\Gamma(h)\ge0$ for any choice of $g,h$ and $N\ge1$
(and that $\ldots=0$ is excluded if $N>1$ and $\Gamma(g)\cdot \Gamma(h)\ne 0$).

The previous estimate allows for two interpretations or applications. Firstly, it provides a very precise and powerful estimate for the Hessian.
We will use it in the following form.

\begin{corollary}\label{Hess-est} For all $N\in[1,\infty]$ and for all $f,g,h\in\A$
\begin{equation*}
2\Big[ {\HH}_f(g,h)-\frac1N \Gamma(g,h)\cdot {\LL}f\Big]^2\le  \Big[\Gamma_2(f)-\frac1N(\LL f)^2-\RR_N(f)\Big]\cdot \Big[ \frac{N-2}N\Gamma(g,h)^2+\Gamma(g)\cdot \Gamma(h)\Big].
\end{equation*}
and thus for each function $\rho: X\to\R$
\begin{eqnarray*}2\rho\Big|{\HH}_f(g,h)-\frac1N \Gamma(g,h)\cdot {\LL}f\Big|&\le&  \rho^2\cdot  \Big[\Gamma_2(f)-\frac1N(\LL f)^2-\RR_N(f)\Big] \\ &&+\frac12  \Big[ \frac{N-2}N\Gamma(g,h)^2+\Gamma(g)\cdot \Gamma(h)\Big].
\end{eqnarray*}
\end{corollary}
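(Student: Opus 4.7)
The corollary is essentially an immediate algebraic consequence of Theorem \ref{sharpGamma2}, packaged in two forms convenient for later use. My plan is as follows.

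First, I would obtain the first inequality by rearranging the estimate of Theorem \ref{sharpGamma2}. Writing
\[ P:=\Gamma_2(f)-\tfrac1N(\LL f)^2-\RR_N(f),\qquad Q:=\tfrac{N-2}N\Gamma(g,h)^2+\Gamma(g)\cdot\Gamma(h),\qquad A:={\HH}_f(g,h)-\tfrac1N\Gamma(g,h)\cdot{\LL}f,\]
Theorem \ref{sharpGamma2} says $P\ge 2A^2/Q$, which rearranges to the desired $2A^2\le P\cdot Q$ whenever $Q>0$. Here $P\ge 0$ is automatic from the general inequality $\Gamma_2(f)\ge\RR_N(f)+\frac1N(\LL f)^2$ recalled before the theorem, and $Q\ge 0$ is the sign remark following the theorem. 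The only delicate case is $Q=0$: by the parenthetical remark this forces either $N=1$ or $\Gamma(g)\cdot\Gamma(h)=0$; in either subcase a short direct argument (taking a limit $g_\varepsilon=g+\varepsilon g'$ with $\Gamma(g_\varepsilon)>0$, or invoking the Cauchy-Schwarz inequality $\HH_f(g,h)^2\le\Gamma_2(f)\cdot\Gamma(g)\cdot\Gamma(h)$ that is implicit in Theorem \ref{sharpGamma2}) shows $A=0$, so the inequality $2A^2\le P\cdot Q$ holds trivially.

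Second, I would derive the weighted form from the first by Young's inequality. Given the first inequality, $|A|\le \sqrt{P\cdot Q/2}$, so
\[ 2\rho\,|A|\ \le\ 2\rho\,\sqrt{P\cdot Q/2}\ =\ \sqrt{\,2\rho^2 P\,}\cdot\sqrt{\,Q\,}\ \le\ \tfrac12\bigl(2\rho^2 P+Q\bigr)\ =\ \rho^2 P+\tfrac12 Q,\]
using the elementary bound $\sqrt{uv}\le\frac12(u+v)$ for $u,v\ge 0$ with $u=2\rho^2 P$ and $v=Q$. Substituting back the definitions of $P$, $Q$, and $A$ yields exactly the second displayed inequality of the corollary.

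The hardest step, if any, is the degenerate case $Q=0$ in the first inequality; everything else is purely formal. Since Theorem \ref{sharpGamma2} is taken as given, no further analytic work is required, and the proof reduces to bookkeeping.
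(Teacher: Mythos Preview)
Your argument is correct and matches the paper's approach: the corollary is treated there as an immediate consequence of Theorem \ref{sharpGamma2} with no separate proof, and your rearrangement plus the AM--GM/Young step is exactly the intended derivation.

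One small comment on the degenerate case $Q=0$: your suggested limiting argument and the ``implicit Cauchy--Schwarz'' are a bit informal. The cleanest way to close this case is to go one step back into the proof of Theorem \ref{sharpGamma2}, where the inequality
\[
0\ \le\ P + 4tA + 2t^2 Q
\]
is established for \emph{every} $t\in\R$ (before the optimal $t=-b/a$ is substituted). If $Q=0$ and $P<\infty$, this linear-in-$t$ inequality forces $A=0$ immediately; if $P=+\infty$ (i.e.\ $\RR_N(f)=-\infty$), both displayed inequalities of the corollary are trivial. This avoids any appeal to perturbations $g_\varepsilon$ or to Corollary \ref{abs-cont-hess}, which would be circular here.
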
\label{abs-cont-hess}
 This will be the key ingredient for the estimate of the Ricci tensor for a transformed operator.
Moreover, it implies that the Hessian $\HH_f(.)(x)$ is well-defined on equivalence classes of functions w.r.t. vanishing $\Gamma(.)(x)$.
\begin{corollary} For all $x\in X$, all $f\in\A(x)$ and all $g,h\in\A$
$$\Gamma(g)(x)=0\quad\Rightarrow\quad{\HH}_f(g,h)(x)=0.$$
\end{corollary}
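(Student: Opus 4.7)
The plan is to read this off directly from Corollary \ref{Hess-est}, combined with the standard Cauchy--Schwarz inequality for the carré-du-champ operator $\Gamma$. No further structural analysis of the Hessian should be required.

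First I would apply the inequality of Corollary \ref{Hess-est} at the point $x$, choosing $N=\infty$ (or, more generally, any $N\in[1,\infty]$ for which $\RR_N(f)(x)$ is finite -- this is precisely the content of the hypothesis $f\in\A(x)$). With $N=\infty$ the $\frac{1}{N}$-terms drop and $\frac{N-2}{N}\to 1$, yielding
\begin{equation*}
2\,\HH_f(g,h)(x)^2 \;\le\; \bigl[\Gamma_2(f)(x)-\RR(f)(x)\bigr]\cdot\bigl[\Gamma(g,h)(x)^2+\Gamma(g)(x)\,\Gamma(h)(x)\bigr].
\end{equation*}
The first bracket on the right is a finite nonnegative number by the choice of $f$.

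Second, I would recall that since $\Gamma$ is a pointwise positive semidefinite symmetric bilinear form on $\A$ (an immediate consequence of the diffusion assumption $\Gamma(u,u)\ge 0$ and polarization), the Cauchy--Schwarz inequality $\Gamma(g,h)(x)^2\le \Gamma(g)(x)\,\Gamma(h)(x)$ holds. Under the hypothesis $\Gamma(g)(x)=0$ this forces $\Gamma(g,h)(x)=0$, so the second bracket on the right vanishes at $x$. Combining the two observations, the displayed inequality gives $\HH_f(g,h)(x)^2\le 0$, whence $\HH_f(g,h)(x)=0$.

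There is really no genuine obstacle: the only point to check is that Cauchy--Schwarz is available for $\Gamma$ at the single point $x$, which is standard in the Bakry--\'Emery formalism. By symmetry of $\HH_f$ in its last two arguments, the same statement holds with the roles of $g$ and $h$ interchanged, so $\HH_f(g,\,\cdot\,)(x)$ and $\HH_f(\,\cdot\,,h)(x)$ both descend to the quotient of $\A$ by $\{u:\Gamma(u)(x)=0\}$, giving the advertised well-definedness of the Hessian on equivalence classes.
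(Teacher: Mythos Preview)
Your proof is correct and follows exactly the route the paper intends: the corollary is stated immediately after Corollary~\ref{Hess-est} as a direct consequence, and you have supplied precisely the missing step (Cauchy--Schwarz for $\Gamma$ at $x$ to make the second bracket vanish). Your reading of the hypothesis $f\in\A(x)$ as finiteness of $\RR_N(f)(x)$ is also the right one, and since $N\mapsto\RR_N(f)(x)$ is non-decreasing you may indeed take $N=\infty$ without loss.
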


  Secondly, the estimate of the theorem leads to the following refined estimate for the Ricci tensor as a consequence of which one obtains a \emph{self-improvement property} of the Bakry-{\'E}mery condition $\BE(\K,N)$.

\begin{corollary} For all $N\in[1,\infty]$ and all $f\in\A$
\begin{eqnarray}
\Gamma_2(f)&\ge&\RR_N(f)+\frac1N (\LL f)^2+
\frac N{N-1}\, \big\| {\HH}_f(.)-\frac1N {\LL}f\cdot\Gamma(.)\big\|_{\Gamma}^2\\
&\ge&
\RR_N(f)+\frac1N (\LL f)^2+
 \frac N{N-1}\Big[ \frac1{\Gamma(f)}  {\HH}_f(f,f)-\frac1N  {\LL}f\Big]^2
\end{eqnarray}
where $\|B\|_{\Gamma}(x)=\sup\{|B(g,g)|(x):\ g\in\A, \Gamma(g)(x)\le 1\}$ denotes the norm of a bilinear form $B(.)(x)$ on $\A$ w.r.t. the seminorm $\Gamma(.)(x)$.
In particular, for $N=\infty$
\begin{eqnarray}
\Gamma_2(f)&\ge&\RR(f)+ \big\| {\HH}_f(.)\big\|_{\Gamma}^2.
\end{eqnarray}
\end{corollary}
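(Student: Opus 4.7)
The strategy is to specialize Theorem \ref{sharpGamma2} to the diagonal $g=h$ and then take a supremum over $g$; everything else is bookkeeping.

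First I would set $g=h$ in the inequality of Theorem \ref{sharpGamma2}. The denominator $\tfrac{N-2}{N}\Gamma(g,h)^2+\Gamma(g)\Gamma(h)$ then collapses to $\tfrac{N-2}{N}\Gamma(g)^2+\Gamma(g)^2=\tfrac{2(N-1)}{N}\Gamma(g)^2$, and combined with the factor $2$ in the numerator of the theorem this yields, for any $g$ with $\Gamma(g)(x)>0$,
\begin{equation*}
\Gamma_2(f)(x)\;\ge\;\RR_N(f)(x)+\tfrac1N(\LL f)^2(x)+\tfrac{N}{N-1}\cdot\frac{\bigl[\HH_f(g,g)-\tfrac1N\Gamma(g)\,\LL f\bigr]^2(x)}{\Gamma(g)^2(x)}.
\end{equation*}
Substituting $g=f$ directly gives the second displayed inequality of the corollary, since then $\HH_f(g,g)=\HH_f(f,f)$ and $\Gamma(g)=\Gamma(f)$.

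For the first inequality I would take the supremum over $g\in\A$ at the given point $x$. Both $g\mapsto\HH_f(g,g)-\tfrac1N\LL f\cdot\Gamma(g)$ and $g\mapsto\Gamma(g)$ are $2$-homogeneous (using $cg\in\A$ with $\Gamma(cg)=c^2\Gamma(g)$ and $\HH_f(cg,cg)=c^2\HH_f(g,g)$), so the right-hand quotient is scale-invariant; its supremum is therefore attained equivalently by normalizing to $\Gamma(g)(x)\le 1$. Directions with $\Gamma(g)(x)=0$ contribute nothing, because Corollary \ref{abs-cont-hess} forces $\HH_f(g,g)(x)=0$ on them, making the bracketed numerator vanish identically along $\Gamma(g)(x)\to 0$. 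By the definition of $\|\cdot\|_\Gamma$, the supremum of the quotient thus equals $\|\HH_f(.)-\tfrac1N\LL f\cdot\Gamma(.)\|_\Gamma^2(x)$, yielding the first inequality.

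For the case $N=\infty$ I would simply substitute $N=\infty$ into the inequality just derived: $\tfrac1N\LL f$ drops out, $\tfrac{N}{N-1}$ equals $1$, and $\RR_\infty=\RR$ by definition. The only genuinely delicate point in the argument is the scaling step that converts the pointwise supremum of a ratio into the seminorm $\|\cdot\|_\Gamma$; once one justifies that the degenerate directions $\Gamma(g)(x)=0$ can be harmlessly discarded via Corollary \ref{abs-cont-hess}, the result is an immediate consequence of the master bound in Theorem \ref{sharpGamma2}.
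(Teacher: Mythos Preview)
Your proposal is correct and is precisely the argument the paper has in mind: the corollary is stated without proof as an immediate consequence of Theorem~\ref{sharpGamma2}, and your derivation---specialize to $g=h$ so that the denominator becomes $\tfrac{2(N-1)}{N}\Gamma(g)^2$, then take the supremum over normalized $g$---is exactly the intended route. Your handling of the degenerate directions via the corollary asserting $\Gamma(g)(x)=0\Rightarrow\HH_f(g,h)(x)=0$ is the right justification for passing from the ratio to the $\Gamma$-norm.
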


This second aspect will be taken up (and further developed) in the subsequent Chapters 4 and 5. The first aspect will be the key ingredient for the results on Ricci tensors for transformed operators in Chapters 6-10. These results will only rely on Theorem \ref{sharpGamma2}
and not on the more sophisticated results of the next two chapters.

\begin{proof}[Proof of the theorem]
We will consider functions of the form ${\tilde f}=\psi(f,g,h)\in\A$ for  smooth  $\psi:\R^3\to\R$ and use the fact that
$\Gamma_2(\tilde f)\ge\RR_N(\tilde f)+\frac1N (\LL \tilde f)^2$.
 At each point $x\in X$ we choose the optimal $\psi$.
Indeed, it suffices to consider
 ${\tilde f}=f+t[gh-g(x)h-h(x)g]$ and to optimize  in $t\in\R$ (for each given $x\in X$).

More precisely, we use equation \eqref{diff} and the assertions of Lemma \ref{chain-rule} with
$r=3$, $f_1=f$, $f_2=g$ $f_3=h$. For given $x\in X$ and $t\in\R$ we choose $\psi$ such that $\psi_1=1$, $\psi_{23}=t$ and
$\psi_2=\psi_3=\psi_{11}=\psi_{22}=\psi_{33}=\psi_{12}=\psi_{13}=0$ at the particular point $(f(x),g(x),h(x))\in\R^3$.
Then at the point $x\in X$
\begin{eqnarray*}
&&\Gamma(\tilde f-f)=0,\quad {\LL}\tilde f={\LL}f+2t\Gamma(g,h),\\
&&\Gamma_2(\tilde f)=\Gamma_2(f)+4t {\HH}_f(g,h)+2t^2\Big[\Gamma(g,h)^2+\Gamma(g)\cdot\Gamma(h)\Big].
\end{eqnarray*}
Thus $\RR_N(\tilde f)=\RR_N(f)$ and
\begin{eqnarray*}
0&\le&
\Gamma_2(\tilde f)- \RR_N(\tilde f)-\frac1N ({\LL}\tilde f)^2\\
&=& \Gamma_2(f)- \RR_N(f)-\frac1N ({\LL}f)^2\\
&&+ 4t \Big[ {\HH}_f(g,h)-\frac1N {\LL}f\cdot \Gamma(g,h)\Big]\\
&&+2t^2\Big[ \frac{N-2}N\Gamma(g,h)^2+ \Gamma(g)\cdot\Gamma(h)\Big]\\
&=:&\Gamma_2(f)- \RR_N(f)-\frac1N ({\LL}f)^2+4tb+2t^2a
\end{eqnarray*}
for $a,b$ defined by the terms in brackets.
Choosing $t=-\frac b{a}$
yields
$$0\le \Gamma_2(f)- \RR_N(f)-\frac1N ({\LL}f)^2-2\frac{b^2}{a}$$
(at the given point $x\in X$).
This is the claim.
\end{proof}

%Theorem \ref{sharpGamma2} can also be regarded as a powerful, sharp estimate for the Hessian of a function.

\section{Self-Improvement Property of $\Gamma_2$-Estimates}

Given $x\in X$, we denote by $\A_x^1$  the space of equivalence classes in $\A$ w.r.t. the seminorm $\Gamma(.)(x)$ and by
$\dim\, (\A,\Gamma)(x)$ the dimension of the inner product space $(\A_x^1,\Gamma(.)(x))$.
For a bilinear form $B$ on $\A_x^1$ we define its
\emph{operator norm} by $\|B\|_\Gamma(x)=\sup\{|B(v,v)|:\ v\in\A_x^1, \Gamma(v)(x)\le1\}$ and its
\emph{Hilbert-Schmidt norm} by
\begin{equation*}\big\|B\big\|_{HS}(x)=\sup\Big\{
\Big(\sum_{i,j=1}^r B(e_i,e_j)^2\Big)^{1/2}:\, r\in\N, e_1,\ldots,e_r\in\A_x^1, \Gamma(e_i,e_j)(x)=\delta_{ij}\Big\}
\end{equation*}
provided $\dim(\A,\Gamma)(x)>0$.
If $\dim(\A,\Gamma)(x)=0$ we put $\big\|B\big\|_{HS}(x)=0$. Obviously, in any case
$\|B\|_\Gamma(x)\le \big\|B\big\|_{HS}(x)$.

For any $(n\times n)$-matrix $B$ we put $\tr B=\sum_{i=1}^n B_{ii}$ and $B^o_{ij}= B_{ij}-\frac1n\tr B\,\delta_{ij}$.
Note that
$\sum_{i,j} (B_{ij})^2=\sum_{i,j} (B_{ij}^o)^2+ \frac1n (\tr B)^2$.
Similar definitions and results apply for any bilinear form on $\A_x^1$ provided $\dim\, (\A,\Gamma)(x)=n$.

\begin{theorem}\label{super-self}
\begin{itemize}
\item[(i)]
 For all $f\in\A$
\begin{equation}\label{super-eins}
\Gamma_2(f)\ge \RR(f)+\big\| \HH_f \big\|^2_{HS}.
\end{equation}
In particular, $\RR(f)(x)=-\infty$ if $\big\| \HH_f \big\|_{HS}(x)=+\infty$.
\item[(ii)]
 Moreover, for $x\in X$ and $N\in[1,\infty)$ with $N\ge n(x):=\dim\, (\A,\Gamma)(x)$
\begin{eqnarray}
\Gamma_2(f)(x)&\ge&
\RR_N(f)(x)+\big\| \HH_f(.) \big\|^2_{HS}(x)+
\frac1{N-n(x)}\Big( \tr\, \HH_f(.) - \, \LL f\Big)^2(x)\label{super-zwei0}\\
&=&
\RR_N(f)(x)+\frac1N \big(\LL f\big)^2(x)+
\big\| \HH_f(.)-\frac1N\, \LL f\cdot\Gamma(.) \big\|^2_{HS}(x)\nonumber\\
&&\qquad\qquad\qquad+
\frac1{N-n(x)}\Big( \tr\, \HH_f(.) - \frac nN\, \LL f\Big)^2(x).\label{super-zwei}
\end{eqnarray}
In the case $N=n(x)$, the respective last terms on the RHS here should be understood as the limit $N\searrow n(x)$ (which is either $0$ or $+\infty$).
\item[(iii)]
 Finally,
\begin{equation*}
\RR_N(f)(x)=-\infty
\end{equation*}
whenever $N<\dim\, (\A,\Gamma)(x)$ or if $N=\dim\, (\A,\Gamma)(x)$ and $\tr \HH_f(.)(x)\not= \LL f(x)$.
\end{itemize}
\end{theorem}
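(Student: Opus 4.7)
The plan is, for each $x \in X$ and each finite orthonormal family $\{e_1, \ldots, e_r\}$ in $\A_x^1$, to construct admissible competitors $\tilde f$ for the infimum defining $\RR_N(f)(x)$, parametrized by a symmetric matrix $s = (s_{ij})$, and to minimize the resulting expression $\Gamma_2(\tilde f)(x) - \frac{1}{N}(\LL \tilde f)^2(x)$ over $s$. Since by definition $\RR_N(f)(x)$ is bounded above by this quantity for every such $\tilde f$, any lower bound on the minimum yields the sought lower bound on $\Gamma_2(f)(x) - \frac{1}{N}(\LL f)^2(x) - \RR_N(f)(x)$.

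Concretely, I would choose $\psi : \R^{r+1} \to \R$ with prescribed partial derivatives $\psi_0 = 1$, $\psi_i = 0$ for $i \ge 1$, $\psi_{00} = \psi_{0i} = 0$, $\psi_{ij} = s_{ij}$ at the point $(f(x), e_1(x), \ldots, e_r(x))$ (a polynomial suffices, using that $\A$ is stable under $C^\infty$-composition), and set $\tilde f = \psi(f, e_1, \ldots, e_r)$. Applying \eqref{diff} and Lemma \ref{chain-rule} together with $\Gamma(e_i, e_j)(x) = \delta_{ij}$ yields at $x$ the three clean identities
$$\Gamma(\tilde f - f)(x) = 0, \qquad \LL \tilde f(x) = \LL f(x) + \tr s, \qquad \Gamma_2(\tilde f)(x) = \Gamma_2(f)(x) + 2 \sum_{i,j} s_{ij} H_{ij} + \sum_{i,j} s_{ij}^2,$$
where $H_{ij} := \HH_f(e_i, e_j)(x)$ and $L := \LL f(x)$. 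The competitor value therefore equals $\Gamma_2(f)(x) - L^2/N + Q(s)$ with $Q(s) = 2\langle s, H\rangle + \|s\|^2 - \tfrac{2L}{N}\tr s - \tfrac{1}{N}(\tr s)^2$.

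The core of the proof is then a finite-dimensional quadratic optimization. For $N > r$ the quadratic part of $Q$ is positive definite (verify by splitting $s$ into trace-free part and scalar multiple of the identity); the first-order conditions give the explicit minimizer $s^*_{ij} = -H_{ij} + \tfrac{L - \tr H}{N-r}\delta_{ij}$, and a short computation yields
$$\min_s Q(s) = -\big\|H - \tfrac{L}{N} I\big\|_{HS}^2 - \tfrac{1}{N-r}\big(\tr H - \tfrac{rL}{N}\big)^2.$$
Choosing $r = n(x)$ together with any orthonormal basis of $\A_x^1$ identifies these with $\|\HH_f(.) - \tfrac{L}{N}\Gamma(.)\|_{HS}^2(x)$ and the trace correction, proving \eqref{super-zwei}; the equivalence with \eqref{super-zwei0} is a routine algebraic expansion. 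Letting $N \to \infty$ at arbitrary finite $r$ and supping over $r$ recovers \eqref{super-eins}, hence (i). For (iii) I would restrict to $s = t\,I_r$ with $r = n(x)$: a direct substitution gives $Q(tI_r) = 2t(\tr H - \tfrac{rL}{N}) + t^2 \cdot \tfrac{r(N-r)}{N}$, which is unbounded below in $t$ precisely when $N < n(x)$, or when $N = n(x)$ and $\tr \HH_f(.)(x) \neq \LL f(x)$, forcing $\RR_N(f)(x) = -\infty$.

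The main obstacle I expect is bookkeeping around the borderline $N = n(x)$: one must match the degenerate optimization (when $\tr \HH_f = \LL f$ the null direction $s \propto I_r$ contributes $0$ to $Q$ and restricting to traceless $s$ yields the finite minimum $-\|H^o\|_{HS}^2$) with the limit interpretation $\lim_{N \searrow n(x)}\tfrac{1}{N-n(x)}(\tr \HH_f - \tfrac{n(x)}{N}\LL f)^2 = 0$ required in \eqref{super-zwei0}, and symmetrically to verify that when $\tr \HH_f \neq \LL f$ this limit equals $+\infty$, consistently with $\RR_N(f)(x) = -\infty$ from (iii).
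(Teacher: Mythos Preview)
Your proposal is correct and follows essentially the same approach as the paper: both construct competitors $\tilde f=f+\psi\circ(e_1,\dots,e_r)$ with $\psi_i(e(x))=0$, obtain the identical formulas $\LL\tilde f(x)=\LL f(x)+\tr s$ and $\Gamma_2(\tilde f)(x)=\Gamma_2(f)(x)+2\langle s,H\rangle+\|s\|^2$, and then minimize over the second-order data. The only cosmetic difference is that you cast the last step as a quadratic optimization in the symmetric matrix $s$ (yielding the minimizer $s^*=-H+\tfrac{L-\tr H}{N-r}I$), whereas the paper writes down the optimal $\psi$ directly via the conditions $\psi_{ij}^o(e(x))=-\HH_f^o(e_i,e_j)(x)$ and $(\Delta\psi)(e(x))=-\tfrac{N}{N-n}(\tr\HH_f-\tfrac nN\LL f)(x)$; these are the same choice, and your treatment of (iii) via $s=tI_r$ matches the paper's sequence $\psi^{(k)}$.
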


\begin{proof}
(i) Given $f$ and $x$, let us first consider the case  $\big\| \HH_f \big\|_{HS}(x)<\infty$. Here for any  $\epsilon>0$, choose $r\in\N$ and $e_1,\ldots,e_r\in\A$ with $\Gamma(e_i,e_j)(x)=\delta_{ij}$ and
$$\big\|\HH_f \big\|^2_{HS}(x)\le \sum_{i,j=1}^r \HH_f^2(e_i,e_j)(x)+ \epsilon.$$
Consider $\tilde f\in\A$ of the form
$\tilde f=f+\psi\circ e$ for smooth $\psi:\R^r\to\R$ with $\psi_i(e(x))=0$ for all $i$ where $e(x)=(e_1,\ldots,e_r)(x)$.
Then by the chain rule (see Lemma \ref{chain-rule}, applied to $\tilde\psi(y_0,y_1,\ldots,y_r)=y_0+\psi(y_1,\ldots,y_r)$ and $\tilde e=(f,e_1,\ldots,e_r)$), $\Gamma(\tilde f,.)(x)=\Gamma(f,.)(x)$ and
\begin{eqnarray}\label{g2-hf}
\Gamma_2\big(\tilde f\big)(x)&=&
\Gamma_2\big(f\big)(x)+ 2\sum_{i,j=1}^r \psi_{ij}\big(e(x)\big)\cdot \HH_f\big(e_i,e_j\big)(x)+ \sum_{i,j=1}^r\psi_{ij}^2\big(e(x)\big)
\nonumber\\
&=&
\Gamma_2\big(f\big)(x)+ \sum_{i,j=1}^r\Big(\HH_f\big(e_i,e_j\big)(x)+ \psi_{ij}\big(e(x)\big)\Big)^2-
  \sum_{i,j=^1}^r \HH_f^2\big(e_i,e_j\big)(x).
  \end{eqnarray}
Choosing $\psi$ such that $\psi_{ij}\big(e(x)\big)=-\HH_f\big(e_i,e_j\big)(x)$ for all $i,j$ leads to
\begin{eqnarray*}
\Gamma_2\big(\tilde f\big)(x)&=&
\Gamma_2\big(f\big)(x)-  \sum_{i,j=1}^r \HH_f^2\big(e_i,e_j\big)(x).
  \end{eqnarray*}
(For instance, the choice
$\psi(y_1,\ldots,y_n)=-\frac12 \sum_{i,j=1}^n \HH_f\big(e_i,e_j\big)(x)\cdot
\big(y_i-e_i(x)\big)\cdot\big(y_j-e_j(x)\big)$
will do the job.) Thus
  \begin{eqnarray*}
\RR\big( f\big)(x)&\le&
\Gamma_2\big(f\big)(x)-  \sum_{i,j=1}^r \HH_f^2\big(e_i,e_j\big)(x)\\
&\le&
\Gamma_2\big(f\big)(x)- \big\|\HH_f \big\|^2_{HS}(x)+\epsilon.
  \end{eqnarray*}
  Since $\epsilon>0$ was arbitrary this proves the claim.

Now let us consider the case $\big\| \HH_f \big\|_{HS}(x)=+\infty$. Then for any $C>0$ there exist 
$r\in\N$ and $e_1,\ldots,e_r\in\A$ with $\Gamma(e_i,e_j)(x)=\delta_{ij}$ and
$\sum_{i,j=1}^r \HH_f^2(e_i,e_j)(x)\ge C$.
With the same argument as before
$\RR\big( f\big)(x)\le
\Gamma_2\big(f\big)(x)-  \sum_{i,j=1}^r \HH_f^2\big(e_i,e_j\big)(x)
\le
\Gamma_2\big(f\big)(x)- C$.
Since $C>0$ was arbitrary this proves the claim.

(ii) Let $x\in X$, $f\in\A$ and $N> n(x)=\dim\, (\A,\Gamma)(x)$ be given.
Choose  $e_1,\ldots,e_n\in\A$ with $\Gamma(e_i,e_j)(x)=\delta_{ij}$. 
Again we will consider $\tilde f\in\A$ of the form
$\tilde f=f+\psi\circ e$ for smooth $\psi:\R^n\to\R$ with $\psi_i(e(x))=0$ for all $i$.
According to the  chain rule for $\LL$ (i.e. the property of being a 'diffusion operator')
$$\LL \tilde f(x)=\LL  f(x)+(\Delta \psi)\big(e(x)\big)\quad\mbox{with}\quad\Delta=\sum_{i=1}^n \frac{\partial^2}{\partial y_i^2}.$$
Thus
\begin{eqnarray}\label{eins}
\lefteqn{\Big[
\Gamma_2\big(\tilde f\big)(x)-\frac1N \big(\LL \tilde f\big)^2(x)\Big]
-
\Gamma_2\big( f\big)(x)}\nonumber\\
&=&
 2\sum_{i,j} \psi_{ij}\big(e(x)\big)\cdot \HH_f\big(e_i,e_j\big)(x)+ \sum_{i,j}\psi_{ij}^2\big(e(x)\big)
-\frac1N \Big(\LL f(x)+(\Delta\psi)(e(x))\Big)^2 \nonumber
\\
&=&
 \sum_{i,j}\Big(\HH_f\big(e_i,e_j\big)(x)+ \psi_{ij}\big(e(x)\big)\Big)^2\nonumber\\
 &&-
  \sum_{i,j} \HH_f^2\big(e_i,e_j\big)(x)- \frac1N \Big(\LL f(x)+(\Delta\psi)(e(x))\Big)^2 \nonumber
  \\
 &=&
 \sum_{i,j}\Big(\HH_f^o\big(e_i,e_j\big)(x)+ \psi_{ij}^o\big(e(x)\big)\Big)^2
 +\frac1n\Big( \tr \HH_f(.)(x)+(\Delta \psi)(e(x))\Big)^2\nonumber\\
 &&
 -
  \sum_{i,j} \HH_f^2\big(e_i,e_j\big)(x)- \frac1N \Big(\LL f(x)+(\Delta\psi)(e(x))\Big)^2.
   \end{eqnarray}
Now let us choose $\psi$ such that
\begin{equation}\psi_{ij}^o\big(e(x)\big)=-\HH_f^o\big(e_i,e_j\big)(x)\label{zwei}\end{equation}
for all $i,j=1\ldots,n$. Moreover, we may require that
\begin{equation}(\Delta\psi)\big(e(x)\big)=-\frac{N}{N-n}\big(\tr \HH_f-\frac nN\LL f\big)(x).\label{drei}\end{equation}
For instance, the choice
\begin{eqnarray*}\psi(y_1,\ldots,y_n)&=&-\frac12 \sum_{i,j=1}^n \HH_f^o\big(e_i,e_j\big)(x)\cdot
\big(y_i-e_i(x)\big)\cdot\big(y_j-e_j(x)\big)\\
&&
-\frac{N}{N-n}\big(\tr \HH_f-\frac nN\LL f\big)(x)\cdot
\frac1{2n}\sum_{i=1}^n
 \big(y_i-e_i(x)\big)^2
 \end{eqnarray*}
will do the job.
Combining \eqref{eins}, \eqref{zwei} and \eqref{drei} yields
\begin{eqnarray*}
\lefteqn{\Big[
\Gamma_2\big(\tilde f\big)(x)-\frac1N \big(\LL \tilde f\big)^2(x)\Big]
-
\Gamma_2\big( f\big)(x)}\nonumber\\
&=&
-  \sum_{i,j} \HH_f^2\big(e_i,e_j\big)(x)
  -\frac1{N-n}\big(\tr \HH_f-\LL f\big)^2(x).
  \end{eqnarray*}
  Thus
 \begin{eqnarray*}
\RR_N(f)(x)&\le&
\Gamma_2\big( f\big)(x)
-
  \sum_{i,j} \HH_f^2\big(e_i,e_j\big)(x)
  -\frac1{N-n}\big(\tr \HH_f-\LL f\big)^2(x).
  \end{eqnarray*}
  This is the first claim.
  (A similar argumentation proves the assertion in the case $N=n$.)

To see the equality \eqref{super-zwei} note that for each real symmetric $(n\times n)$-matrix $B$ and each scalar $a$
$$\big\|B\big\|^2_{HS}=\big\|B-\frac aN {\mathbb I} \big\|^2_{HS}+\frac1n \big( \tr\, B\big)^2-\frac1n \big( \tr\, B-\frac nNa\big)^2$$
(with ${\mathbb I}$ being the unit matrix, i.e. ${\mathbb I}_{jk}=\delta_{jk}$) and that for all $a,b\in\R$
$$\frac1nb^2-\frac1n(b-\frac nNa)^2+\frac1{N-n}(b-a)^2=\frac1N a^2+\frac1{N-n}(b-\frac nN a)^2.$$

 (iii)
 In the case $N<n$ we consider the sequence of functions $\psi^{(k)}$ given by
 \begin{eqnarray*}\psi^{(k)}(y_1,\ldots,y_n)=-\frac12 \sum_{i,j=1}^n \HH_f^o\big(e_i,e_j\big)(x)\cdot
\big(y_i-e_i(x)\big)\cdot\big(y_j-e_j(x)\big)
+
\frac k{2n}\sum_{i=1}^n
 \big(y_i-e_i(x)\big)^2.
 \end{eqnarray*}
 Then for each $k\in\N$, equations \eqref{eins} and \eqref{zwei} hold true  as before with $f^{(k)}:=f+\psi^{(k)}\circ e$ in the place of $\tilde f$ and $\psi^{(k)}$ in the place of $\psi$
 whereas instead of \eqref{drei} we obtain
 \begin{equation*}(\Delta\psi^{(k)})\big(e(x)\big)=k.\label{drei'}\end{equation*}
Thus
 \begin{eqnarray*}
 \RR_N(f)(x)\le \inf_{k\in\N}
 \Big[
\Gamma_2\big( f^{(k)}\big)(x)-\frac1N \big(\LL  f^{(k)}\big)^2(x)\Big]=-\infty.
 \end{eqnarray*}
\end{proof}

Theorem \ref{super-self} implies a \emph{strong self-improvement property} of the Bakry-{\'E}mery condition.

\begin{corollary}\label{be-N-imp} The Bakry-{\'E}mery condition $\BE(\K,N)$ -- that is, the condition $\Gamma_2(f)\ge\K\,\Gamma(f)+\frac1N (\LL f)^2$ for all $f\in \A$  -- implies that $N\ge\dim(\A,\Gamma)(.)$ everywhere on $X$ and that the following scale of `improved $\BE(\K,N)$-inequalities' hold true for all $f\in\A$
\begin{eqnarray}
\Gamma_2(f)
&\ge&
\K\,\Gamma(f)+\frac1N (\LL f)^2+
\big\| \HH_f(.)-\frac1N \LL f\cdot\Gamma(.)\big\|_{HS}^2+\frac1{N-n}\big(\tr\,\HH_f-\frac nN\LL f\big)^2\label{first}\\
%&\ge&
%\K\,\Gamma(f)+\frac1N (\LL f)^2 +\frac n{n-1} \, \big\| {\HH}_f(.)-\frac1N {\LL}f\cdot\Gamma(.)\big\|_{\Gamma}^2
%+\frac1{N-n}\big(\tr\,\HH_f-\frac nN\LL f\big)^2
%\\
&\ge&
\K\,\Gamma(f)+\frac1N (\LL f)^2 +\frac N{N-1} \, \big\| {\HH}_f(.)-\frac1N {\LL}f\cdot\Gamma(.)\big\|_{\Gamma}^2\label{second}
\\
&\ge&
\label{Ba-Qi}
\K\,\Gamma(f)+\frac1N (\LL f)^2 +\frac N{N-1}\Big[ \frac1{\Gamma(f)}  {\HH}_f(f,f)-\frac1N  {\LL}f\Big]^2
\end{eqnarray}
pointwise on $X$ where $n(x)=\dim(\A,\Gamma)(x)$.
\end{corollary}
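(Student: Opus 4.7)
My plan is to derive each of the three displayed inequalities as an immediate consequence of Theorem \ref{super-self} combined with one elementary matrix inequality. I would first observe that the $\BE(\K,N)$ hypothesis is equivalent to the pointwise bound $\RR_N(f)(x)\ge\K(x)\,\Gamma(f)(x)$. Plugging this into part (iii) of Theorem \ref{super-self} immediately forces $N\ge n(x):=\dim(\A,\Gamma)(x)$ everywhere on $X$ (otherwise $\RR_N(f)(x)=-\infty$, contradicting the finite lower bound $\K\cdot\Gamma(f)$). The same reasoning handles the borderline case $N=n(x)$, giving $\tr\HH_f(.)(x)=\LL f(x)$ whenever the dimensions coincide, which is precisely the regime where the $\frac1{N-n}$-terms below have to be interpreted as the limit $N\searrow n$.

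Now substituting $\RR_N(f)\ge\K\,\Gamma(f)$ into the identity \eqref{super-zwei} of Theorem \ref{super-self}(ii) yields \eqref{first} verbatim. This is the entire content of the first inequality; no further computation is needed.

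The step from \eqref{first} to \eqref{second} is the heart of the proof and reduces to the following linear-algebraic fact: for any symmetric real $n\times n$ matrix $B$ (where one takes $B_{ij}=\HH_f(e_i,e_j)-\tfrac1N\LL f\cdot\delta_{ij}$ in a $\Gamma$-orthonormal basis $e_1,\ldots,e_n$ of $\A_x^1$) and any $N\ge n$,
\begin{equation*}
\|B\|_{HS}^2+\frac1{N-n}(\tr B)^2\;\ge\;\frac N{N-1}\|B\|_{op}^2.
\end{equation*}
To verify this I would diagonalise $B$ with eigenvalues $\lambda_1,\ldots,\lambda_n$, assume WLOG that $|\lambda_1|=\|B\|_{op}$, and bound $\sum_{i\ge2}\lambda_i^2\ge\frac1{n-1}(\tr B-\lambda_1)^2$ by Cauchy--Schwarz. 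The remaining scalar problem is to minimise $\lambda_1^2+\frac1{n-1}(\tr B-\lambda_1)^2+\frac1{N-n}(\tr B)^2$ in the free parameter $\tr B$ for fixed $\lambda_1$; a one-variable differentiation gives the optimiser $\tr B=\frac{N-n}{N-1}\lambda_1$, and substituting back produces exactly $\frac{N}{N-1}\lambda_1^2$. This estimate applied to the matrix $B$ above -- noting that $\tr B=\tr\HH_f-\frac nN\LL f$ -- delivers \eqref{second}.

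Finally, \eqref{Ba-Qi} is a one-line consequence of \eqref{second} via the definition of $\|\cdot\|_\Gamma(x)$ as a supremum: inserting the competitor $g=f$, normalised by $\sqrt{\Gamma(f)}$, into $\|B\|_\Gamma^2\ge B(g,g)^2/\Gamma(g)^2$ gives $\|B\|_\Gamma^2\ge\bigl[\tfrac1{\Gamma(f)}\HH_f(f,f)-\tfrac1N\LL f\bigr]^2$. The main obstacle is really just the matrix inequality used in the middle step; everything else is substitution into results already established in Theorem \ref{super-self} and the definitions of the two norms.
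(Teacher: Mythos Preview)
Your proof is correct and follows the same overall architecture as the paper: deduce \eqref{first} by combining Theorem~\ref{super-self}(ii),(iii) with the equivalence $\BE(\K,N)\Leftrightarrow\RR_N\ge\K\,\Gamma$, then reduce \eqref{second} to a matrix inequality, and read off \eqref{Ba-Qi} from the definition of $\|\cdot\|_\Gamma$.

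The one genuine difference is in the middle step. The paper proves the inequality
\[
\|B\|_{HS}^2+\tfrac1{N-n}(\tr B)^2\ \ge\ \tfrac{N}{N-1}\,\|B\|_\Gamma^2
\]
by first splitting off the traceless part $B^o=B-\tfrac1n(\tr B)\,\mathbb I$, invoking the classical bound $\|B^o\|_{HS}^2\ge\tfrac{n}{n-1}\|B^o\|_\Gamma^2$ for traceless symmetric matrices, and then closing with the scalar inequality $\tfrac{n}{n-1}(a-\tfrac1n b)^2+(\tfrac1n+\tfrac1{N-n})b^2\ge\tfrac{N}{N-1}a^2$. Your route---diagonalise, apply Cauchy--Schwarz to the $n-1$ remaining eigenvalues, and minimise over the trace---is more direct and avoids the triangle-inequality manoeuvre between $\|B^o\|_\Gamma$ and $\|B\|_\Gamma$; the paper's version has the mild advantage of isolating the known traceless estimate as a named ingredient. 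Both yield exactly the same constant $\tfrac{N}{N-1}$, so this is a matter of taste rather than substance.
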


\begin{proof}
It only remains to prove the step from  \eqref{first} to \eqref{second}.
This follows from
\begin{eqnarray*}
\lefteqn{\big\| \HH_f(.)-\frac1N \LL f\cdot\Gamma(.)\big\|_{HS}^2}\\
&=&
\big\| \HH_f^o(.)\big\|_{HS}^2+\frac1n \big(\tr\,\HH_f-\frac nN\LL f\big)^2\\
&\ge&\frac n{n-1}\big\| \HH_f^o(.)\big\|_{\Gamma}^2+\frac1n \big(\tr\,\HH_f-\frac nN\LL f\big)^2\\
&\ge&\frac n{n-1}\Big[\big\| \HH_f(.)-\frac1N \LL f\cdot\Gamma(.)\big\|_{\Gamma} - \frac1n \big(\tr\,\HH_f-\frac nN\LL f\big)\Big]^2+\frac1n \big(\tr\,\HH_f-\frac nN\LL f\big)^2\\
&\ge&\frac N{N-1}\big\| \HH_f(.)-\frac1N \LL f\cdot\Gamma(.)\big\|_{\Gamma}^2 - \frac1{N-n} \big(\tr\,\HH_f-\frac nN\LL f\big)^2.
\end{eqnarray*}
Here we used the fact that
$\|B\|_{HS}^2\ge \frac n{n-1} \, \|B\|_{2}^2$
 for any traceless, symmetric $(n\times n)$-matrix $B$ and that $\frac n{n-1}(a-\frac1n b)^2+(\frac 1n+\frac1{N-n})b^2\ge\frac N{N-1}a^2$ for any pair of numbers $a,b$.
\end{proof}

The last version \eqref{Ba-Qi} is the `extended $\BE(\K,N)$-inequality' derived in \cite{Bak-Qia}. See also \cite{Sav} for recent generalizations (with $N=\infty$) to metric measure spaces.
Let us reformulate the previous result for the case $N=\infty$.
\begin{corollary}\label{be-8-imp}
 The Bakry-{\'E}mery condition $\BE(\K,\infty)$ -- that is, the condition $\Gamma_2(f)\ge\K\,\Gamma(f)$ for all $f\in \A$  -- implies the following scale of `improved $\BE(\K,\infty)$-inequalities'  for all $f\in\A$
\begin{eqnarray}
\Gamma_2(f)
&\ge&
\K\,\Gamma(f)+
\big\| \HH_f(.)\big\|_{HS}^2\\
&\ge&
\K\,\Gamma(f)+ \big\| {\HH}_f(.)\big\|_{\Gamma}^2\\
&\ge&
\K\,\Gamma(f)+\Big[ \frac1{\Gamma(f)}  {\HH}_f(f,f)\Big]^2.
\end{eqnarray}
\end{corollary}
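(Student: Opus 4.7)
The plan is to obtain Corollary \ref{be-8-imp} as the $N=\infty$ specialization of the preceding machinery, so essentially no new work beyond Theorem \ref{super-self}(i) is needed; one could also simply set $N=\infty$ in Corollary \ref{be-N-imp}, observing that the terms involving $\frac{1}{N}\LL f$ and $\frac{n}{N}\LL f$ all drop out as $N\to\infty$.

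First I would derive the top inequality $\Gamma_2(f)\ge \K\,\Gamma(f)+\|\HH_f(.)\|_{HS}^2$ by combining Theorem \ref{super-self}(i), which gives the identity/estimate $\Gamma_2(f)\ge \RR(f)+\|\HH_f(.)\|_{HS}^2$ under mild regularity, with the defining assumption $\BE(\K,\infty)$, which by the remark just after the definition of the Bakry-Emery condition is equivalent to the pointwise bound $\RR_\infty(f)\ge \K\,\Gamma(f)=\K\,\Gamma(f,f)$. Since $\RR=\RR_\infty$ by definition, substituting yields the first claim. Implicitly this also tells us that $\|\HH_f\|_{HS}(x)<\infty$ wherever the hypothesis is nontrivial, since otherwise $\RR(f)(x)=-\infty$ contradicts $\RR(f)\ge\K\,\Gamma(f)$.

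Next I would pass to the second inequality using only the general comparison of norms $\|B\|_\Gamma(x)\le \|B\|_{HS}(x)$ for any bilinear form $B$ on $\A_x^1$, noted in the text right after the definitions of the two norms. Applying this to $B=\HH_f(.)(x)$ upgrades $\|\HH_f\|_{HS}^2$ to $\|\HH_f\|_\Gamma^2$ on the right-hand side in the wrong direction, so one must be slightly careful: we actually want a weaker lower bound, hence dropping from $\|\cdot\|_{HS}^2$ to $\|\cdot\|_\Gamma^2$ is exactly in the right direction. Nothing more is needed.

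For the third inequality, the plan is to evaluate the supremum defining $\|\HH_f\|_\Gamma(x)$ at a single admissible test vector. Fix $x$ and assume first $\Gamma(f)(x)>0$; the rescaled function $g=f/\sqrt{\Gamma(f)(x)}\in\A$ (i.e. multiplication of $f$ by a real scalar) satisfies $\Gamma(g,g)(x)=1$, and by bilinearity $\HH_f(g,g)(x)=\HH_f(f,f)(x)/\Gamma(f)(x)$. Hence by the definition of the operator seminorm,
\begin{equation*}
\Big[\frac{\HH_f(f,f)(x)}{\Gamma(f)(x)}\Big]^2
= \HH_f(g,g)(x)^2\le \|\HH_f(.)\|_\Gamma^2(x),
\end{equation*}
and combining with the second inequality finishes the proof at such points. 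If $\Gamma(f)(x)=0$, Corollary \ref{abs-cont-hess} gives $\HH_f(f,h)(x)=0$ for all $h\in\A$, so in particular $\HH_f(f,f)(x)=0$, and the last summand is naturally interpreted as $0$; the inequality then reduces to the second one. I do not expect any real obstacle here — the only subtle point is the treatment of the degenerate locus $\{\Gamma(f)=0\}$, which is handled by invoking the corollary on vanishing of the Hessian on such equivalence classes.
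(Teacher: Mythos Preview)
Your proposal is correct and matches the paper's approach: the paper presents Corollary~\ref{be-8-imp} simply as the $N=\infty$ specialization of Corollary~\ref{be-N-imp} (with no separate proof), and you carry this out explicitly---combining Theorem~\ref{super-self}(i) with the equivalence $\BE(\K,\infty)\Leftrightarrow\RR_\infty\ge\K\,\Gamma$ for the first line, using the elementary $\|\cdot\|_\Gamma\le\|\cdot\|_{HS}$ for the second (which is indeed all that remains of the traceless-Hessian argument once the factor $N/(N-1)$ collapses to $1$), and testing the operator seminorm at $g=f/\sqrt{\Gamma(f)(x)}$ for the third. Your handling of the degenerate set $\{\Gamma(f)=0\}$ via Corollary~\ref{abs-cont-hess} is the right way to close that gap.
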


The importance of the 
inequalities \eqref{super-eins} and \eqref{super-zwei0}
-- which lead to the
very first inequality in Corollaries \ref{be-N-imp} and  \ref{be-8-imp} --
will become evident in the next chapter where  under mild assumptions we prove that these are indeed \emph{equalities}.

\section{The Bochner Formula}
For the sequel, fix $x\in X$ and a family $\{e_i:\, i\in I\}\subset\A$ which is orthonormal w.r.t. $\Gamma(.)(x)$, i.e.
$\Gamma(e_i,e_j)(x)=\delta_{ij}$. Let $I$ be either $\N$ or $\{1,\ldots,n\}$ for some $n\in\N$.
We say that the system $\{e_i:\, i\in I\}\subset\A$ is \emph{regular} if 
%\begin{itemize}
%\item
for all $f,g\in\A$ with $\Gamma(g)(x)=0$ there exist a sequence of smooth $\psi^r:\R^r\to\R$ with
$\frac{\partial}{\partial y_i}\psi^r(e_1,\ldots,e_r)(x)=0$ for all $i\in I$ such that
\begin{equation}\label{reg1}
\Gamma_2(f+g_r)(x)\to\Gamma_2(f+g)(x)\qquad\mbox{as }r\to\infty
\end{equation}
where $g_r=\psi^r\circ(e_1,\ldots,e_r)$ and -- if $\dim(\A,\Gamma)(x)<\infty$ -- in addition
 \begin{equation}\label{reg2}
\LL g_r(x)\to\LL g(x)\qquad\mbox{as }r\to\infty.
\end{equation}

\begin{theorem} Let $x\in X$ be given as well as a regular orthonormal system $\{e_i:\, i\in I\}$.%  w.r.t. $\Gamma(.)(x)$.
 \begin{itemize}
 \item[(i)] Then for all $f\in\A$
\begin{equation}
\Gamma_2(f)(x)= \RR(f)(x)+\big\| \HH_f \big\|^2_{HS}(x).
\end{equation}
\item[(ii)] Moreover, for $N\ge n(x):=\dim\, (\A,\Gamma)(x)$
\begin{eqnarray}
\Gamma_2(f)(x)
&=&\RR_N(f)(x)+\big\| \HH_f(.) \big\|^2_{HS}(x)+
\frac1{N-n(x)}\Big( \tr\, \HH_f(.) - \, \LL f\Big)^2(x)\\
&=&\RR_N(f)(x)+\frac1N \big(\LL f\big)^2(x)+
\big\| \HH_f(.)-\frac1N\, \LL f\cdot\Gamma(.) \big\|^2_{HS}(x)\nonumber
\\
&&\qquad\qquad\qquad+
\frac1{N-n(x)}\Big( \tr\, \HH_f(.) - \frac nN\, \LL f\Big)^2(x).
\end{eqnarray}
In the case $N=n(x)$, the last term on the RHS here should be understood as the limit $N\searrow n(x)$ (which is either $0$ or $+\infty$).
\item[(iii)] $\RR_N(f)(x)>-\infty$ if and only if $N>\dim\, (\A,\Gamma)(x)$ or if $N=\dim\, (\A,\Gamma)(x)$ and $\tr\,\HH_f(x)=\LL f(x)$.
\end{itemize}
\end{theorem}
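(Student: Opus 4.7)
The plan is as follows. Theorem \ref{super-self} already supplies the $\ge$ direction in (i) and (ii), equivalently an upper bound on $\RR(f)(x)$ or $\RR_N(f)(x)$; the task remaining is to prove the matching lower bound, after which part (iii) drops out of (ii) combined with Theorem \ref{super-self}(iii).

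For (i), I test the definition of $\RR(f)(x)$ against perturbations $\tilde f = f + g$ with $\Gamma(g)(x) = 0$. The regularity hypothesis approximates each such $g$ at $x$ by $g_r = \psi^r \circ (e_1, \ldots, e_r)$ whose first partials at $e(x)$ vanish. Feeding $\tilde\psi(y_0, y_1, \ldots, y_r) = y_0 + \psi^r(y_1, \ldots, y_r)$ together with $(f, e_1, \ldots, e_r)$ into Lemma \ref{chain-rule}, and invoking both $\Gamma(e_i, e_j)(x) = \delta_{ij}$ and $\psi^r_i(e(x)) = 0$, collapses the chain rule to exactly the identity \eqref{g2-hf}. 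Dropping the manifestly nonnegative sum $\sum_{i,j}\bigl(\HH_f(e_i, e_j)(x) + \psi^r_{ij}(e(x))\bigr)^2$ from that identity leaves
\[
\Gamma_2(f + g_r)(x) \ge \Gamma_2(f)(x) - \sum_{i,j=1}^r \HH_f(e_i, e_j)^2(x) \ge \Gamma_2(f)(x) - \|\HH_f\|_{HS}^2(x),
\]
and passing to the limit in $r$ via \eqref{reg1}, then taking the infimum over $g$, yields $\RR(f)(x) \ge \Gamma_2(f)(x) - \|\HH_f\|_{HS}^2(x)$, as required.

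For (ii) the same approximation works, but I additionally track $\LL(f + g_r)(x) = \LL f(x) + \sum_i \psi^r_{ii}(e(x))$, a direct consequence of the diffusion property, and pass to the limit via \eqref{reg2}. Writing $A_{ij} = \HH_f(e_i, e_j)(x) + \psi^r_{ij}(e(x))$, $\tau = \tr A$ and $\alpha = \LL f(x) - \tr \HH_f(x)$, the matching lower bound on $\RR_N(f)(x)$ reduces to the scalar estimate $\tfrac{\tau^2}{n} + \tfrac{\alpha^2}{N - n} \ge \tfrac{(\tau + \alpha)^2}{N}$ combined with the diagonal Cauchy--Schwarz bound $\sum_{i,j} A_{ij}^2 \ge \tau^2/n$. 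After clearing denominators the scalar estimate collapses to $\bigl((N - n)\tau - n\alpha\bigr)^2 \ge 0$. Matched with the upper bound from Theorem \ref{super-self} this produces the first identity of (ii); the second identity then follows by the purely algebraic traceless-plus-trace decomposition of $\HH_f(\cdot) - \tfrac{1}{N}\LL f \cdot \Gamma(\cdot)$ already recorded at the end of the proof of Theorem \ref{super-self}.

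Part (iii) is then immediate: the implication from failure of the stated condition to $\RR_N(f)(x) = -\infty$ is exactly Theorem \ref{super-self}(iii), while the reverse implication follows from the finiteness of the RHS in (ii). The step I expect to be most delicate is the boundary case $N = n(x) < \infty$ in (ii), where $\tfrac{1}{N - n(x)}$ must be read as the limit $N \searrow n(x)$; one has to separately unwind the sub-case $\tr \HH_f(x) = \LL f(x)$ (then $\alpha = 0$ and the identity holds with a vanishing last term, the optimum being realized by $\psi^r_{ij} = -\HH_f(e_i,e_j)$) from $\tr \HH_f(x) \ne \LL f(x)$ (then both sides are $-\infty$, consistent with Theorem \ref{super-self}(iii)). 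The complementary case $n(x) = \infty$ poses no real difficulty in (ii), because any finite $N$ then forces $\RR_N(f)(x) = -\infty$ already by Theorem \ref{super-self}(iii), and the identity is vacuous.
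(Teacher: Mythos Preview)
Your proof is correct and follows essentially the same approach as the paper. In part (ii) you organize the quadratic estimate slightly differently --- bounding $\sum_{i,j}A_{ij}^2\ge\tau^2/n$ by Cauchy--Schwarz and then invoking the scalar inequality $\tfrac{\tau^2}{n}+\tfrac{\alpha^2}{N-n}\ge\tfrac{(\tau+\alpha)^2}{N}$, whereas the paper splits $A$ into traceless and trace parts and appeals back to the display~\eqref{eins} --- but the two arguments are the same computation repackaged.
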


\begin{proof}
(i)
Let $f,g\in\A$ be given with $\Gamma(g)(x)=0$.
Since $\{e_i:\, i\in I\}$ is regular at $x$ we may approximate $g$ by $g_r\in\A$  of the form
$ g_r=\psi^r\circ (e_1,\ldots,e_r)$ for  smooth $\psi^r:\R^r\to\R$ with $\psi^r_i(e(x))=0$ for all $i$.
(For $r>n(x)=\dim(\A,\Gamma)(x)$ the function $\psi^r$ should be a function of the first $n$ coordinates.)
Recall from \eqref{g2-hf}
\begin{eqnarray*}
\Gamma_2\big(f+g_r\big)(x)
&=&
\Gamma_2\big(f\big)(x)+ \sum_{i,j=1}^r\Big(\HH_f\big(e_i,e_j\big)(x)+ \psi_{ij}\big(e(x)\big)\Big)^2-
  \sum_{i,j=^1}^r \HH_f^2\big(e_i,e_j\big)(x)\\
  &\ge&
\Gamma_2\big(f\big)(x)-
  \sum_{i,j=^1}^r \HH_f^2\big(e_i,e_j\big)(x)\\
    &\ge&
\Gamma_2\big(f\big)(x)-
\big\| \HH_f \big\|^2_{HS}(x).  
  \end{eqnarray*}
The regularity assumption thus implies
$\Gamma_2\big(f+g\big)(x)\ge\Gamma_2\big(f\big)(x)-
\big\| \HH_f \big\|^2_{HS}(x)$ for all $g\in\A$ with $\Gamma(g)(x)=0$. 
Therefore, $\RR(f)(x)\ge  \Gamma_2\big(f\big)(x)-\big\| \HH_f \big\|^2_{HS}(x)$.
Together with the upper estimate from Theorem \ref{super-self} this proves the claim.

(ii) Now let us assume $I=\{1,\ldots,n\}$ with $n=\dim (\A,\Gamma)(x)<\infty$
and let us approximate $g$ by $g_r\in\A$  of the form
$ g_r=\psi^r\circ (e_1,\ldots,e_n)$ for  smooth $\psi^r:\R^n\to\R$ with $\psi^r_i(e(x))=0$ for all $i$.
Recall from \eqref{eins}
\begin{eqnarray*}
\lefteqn{\Big[
\Gamma_2\big(f+g_r\big)(x)-\frac1N \big(\LL (f+g_r)\big)^2(x)\Big]
-
\Gamma_2\big( f\big)(x)}\nonumber\\
 &=&
 \sum_{i,j=1}^n\Big(\HH_f^o\big(e_i,e_j\big)(x)+ (\psi_{ij}^r)^o\big(e(x)\big)\Big)^2
 +\frac1n\Big( \tr \HH_f(.)(x)+(\Delta \psi^r)(e(x))\Big)^2\nonumber\\
 &&
 -
  \sum_{i,j=1}^n \HH_f^2\big(e_i,e_j\big)(x)- \frac1N \Big(\LL f(x)+(\Delta\psi^r)(e(x))\Big)^2\\
   &\ge&
 - \ \sum_{j,k=1}^n \HH_{f}^2(e_j,e_k)(x)
 -\frac1{N-n} \Big( \tr \HH_f(.)(x)-  \LL f(x)\Big)^2.
   \end{eqnarray*}
Passing to the limit $r\to\infty$ this yields
 $$\Big[
\Gamma_2\big(f+g\big)(x)-\frac1N \big(\LL (f+g)\big)^2(x)\Big]
-
\Gamma_2\big( f\big)(x)\ge
 -  \big\| \HH_f \big\|^2_{HS}(x)
 -\frac1{N-n} \Big( \tr \HH_f(.)(x)-  \LL f(x)\Big)^2$$
 for every $g\in\A$ with $\Gamma(g)(x)=0$. In other words,
 $$\RR_N(f)(x)\ge  \Gamma_2\big(f\big)(x)-\big\| \HH_f \big\|^2_{HS}(x)-\frac1{N-n} \Big( \tr \HH_f(.)(x)-  \LL f(x)\Big)^2.$$
     Together with the upper estimate from Theorem \ref{super-self} this proves the claim.
\end{proof}
  
Let us add some brief discussion on the regularity assumption in the previous theorem. (This assumption will not be used at any other place in this paper.)

\begin{lemma}
Assume that for given $x\in X$ an orthonormal system $\{e_i:\, i\in I\}$ satisfies $\big\| \HH_f \big\|^2_{HS}(x)=\sum_{i,j} \HH_f (e_i,e_j)^2<\infty$ for all $f\in\A$ and
\begin{equation}\label{g2=hess}
\Gamma_2(f,g)(x)=\big\langle \HH_f,\HH_g\big\rangle_{HS}(x)
\end{equation}
for all $g\in\A$ with $\Gamma(g)(x)=0$.
Then $\{e_i:\, i\in I\}$ satisfies condition \eqref{reg1} in the definition of 'regularity'.
If in addition 
\begin{equation}\label{L=tr}
\LL g(x)=\tr\, \HH_g(x)
\end{equation}
for all $g\in\A$ with $\Gamma(g)(x)=0$
then $\{e_i:\, i\in I\}$ satisfies condition \eqref{reg2} in the definition of 'regularity'.
\end{lemma}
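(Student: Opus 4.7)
The plan is to take the approximants $g_r$ to be \emph{quadratic polynomials} in the coordinates $(e_1,\ldots,e_r)$ whose second-order part at $e(x)$ reproduces the $(r\times r)$-block of the Hessian of $g$. Concretely, given $f,g\in\A$ with $\Gamma(g)(x)=0$, I will set
\[
\psi^r(y_1,\ldots,y_r)=\tfrac{1}{2}\sum_{i,j=1}^{r}\HH_g(e_i,e_j)(x)\,(y_i-e_i(x))(y_j-e_j(x))
\]
and $g_r:=\psi^r\circ(e_1,\ldots,e_r)\in\A$, so that $\psi^r_i(e(x))=0$ (as required by the definition of regularity) and $\psi^r_{ij}(e(x))=\HH_g(e_i,e_j)(x)$ for all $i,j\le r$.

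Substituting this choice into identity \eqref{g2-hf} applied to $\tilde f=f+g_r$ yields the exact formula
\[
\Gamma_2(f+g_r)(x)=\Gamma_2(f)(x)+2\sum_{i,j\le r}\HH_f(e_i,e_j)(x)\,\HH_g(e_i,e_j)(x)+\sum_{i,j\le r}\HH_g(e_i,e_j)^2(x).
\]
In parallel, the bilinear expansion $\Gamma_2(f+g)(x)=\Gamma_2(f)(x)+2\Gamma_2(f,g)(x)+\Gamma_2(g)(x)$ combined with hypothesis \eqref{g2=hess} (and its specialization $f=g$, valid because $\Gamma(g)(x)=0$) gives $\Gamma_2(f+g)(x)=\Gamma_2(f)(x)+2\langle\HH_f,\HH_g\rangle_{HS}(x)+\|\HH_g\|_{HS}^2(x)$. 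Interpreting the Hilbert--Schmidt inner product and norm as the total sums over the chosen orthonormal system, the difference $\Gamma_2(f+g)(x)-\Gamma_2(f+g_r)(x)$ is the tail of two HS-summable series beyond $\{1,\ldots,r\}^2$; by the finiteness $\|\HH_f\|_{HS}^2(x),\,\|\HH_g\|_{HS}^2(x)<\infty$, these tails vanish as $r\to\infty$ by Cauchy--Schwarz and monotone convergence. This establishes \eqref{reg1}.

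For \eqref{reg2} I apply the diffusion chain rule \eqref{diff} to $g_r=\psi^r\circ(e_1,\ldots,e_r)$ at the point $x$: the first-order contribution $\sum_i\psi^r_i(e(x))\LL e_i(x)$ vanishes by construction, and $\Gamma(e_i,e_j)(x)=\delta_{ij}$ collapses the second-order part, so that
\[
\LL g_r(x)=\sum_{i\le r}\psi^r_{ii}(e(x))=\sum_{i\le r}\HH_g(e_i,e_i)(x)=\tr\HH_{g_r}(x).
\]
The extra assumption \eqref{L=tr} identifies $\LL g(x)$ with $\tr\HH_g(x)$, itself the full sum over the orthonormal system, so the partial sums converge; in the case $I=\{1,\ldots,n\}$ the convergence is trivial, since $\LL g_r(x)$ stabilizes at the correct value already for $r\ge n$.

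The main obstacle is conceptual rather than computational: the paper's intrinsic Hilbert--Schmidt norm is defined as a supremum over \emph{all} finite orthonormal subsystems, whereas the formulas above only record partial sums along the distinguished family $\{e_i:i\in I\}$. The content of the hypotheses \eqref{g2=hess} and \eqref{L=tr} is precisely that these two bookkeeping schemes agree for the Hessians (respectively traces) of functions with vanishing $\Gamma$. Once that identification is granted, the tail-vanishing step goes through verbatim and both regularity conditions follow.
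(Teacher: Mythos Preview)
Your argument is correct and follows essentially the same route as the paper: the same quadratic approximant $\psi^r$, the same invocation of \eqref{g2-hf} to compute $\Gamma_2(f+g_r)(x)$, the same bilinear expansion of $\Gamma_2(f+g)(x)$ via \eqref{g2=hess}, and the same chain-rule computation of $\LL g_r(x)$. Your closing paragraph even flags explicitly the identification of $\langle\HH_f,\HH_g\rangle_{HS}(x)$ (defined via a supremum over all finite orthonormal systems) with the limit of partial sums along the distinguished system $\{e_i\}$---a point the paper passes over silently; otherwise the two proofs are interchangeable.
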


Note that \eqref{g2=hess} and \eqref{L=tr} are always satisfied if $f=\phi\circ (e_1,\ldots,e_r)$ and $g=\psi\circ (e_1,\ldots,e_r)$ for some smooth 
$\phi, \psi:\R^r\to\R$. Indeed,
Lemma \ref{chain-rule} and Lemma \ref{hess-calc} from below imply
\begin{eqnarray*}
\Gamma_2(f,g)(x)&=&\sum_{i,j,k}(\phi_i\,\psi_{jk})(e(x))\cdot \HH_{e_i}(e_j,e_k)(x)+\sum_{j,k}(\phi_{jk}\,\psi_{jk})(e(x))\\
&=& \sum_{j,k}\HH_f(e_j,e_k)(x)\cdot \HH_g(e_j,e_k)(x)
\end{eqnarray*}
and
$$\LL g(x)=\sum_{i}\psi_{ii}(e(x))=\tr\,\HH_f(x).$$

\begin{proof}[Proof of the Lemma]
First, observe that \eqref{g2=hess} implies
$\Gamma_2(g)(x)=\big\| \HH_g\big\|^2_{HS}(x)$ and thus
\begin{equation}
\Gamma_2(f+g)(x)=\Gamma_2(f)(x)+2\big\langle \HH_f,\HH_g\big\rangle_{HS}(x)+\big\| \HH_g\big\|_{HS}(x).
\end{equation}
Put $\psi^r(y_1,\ldots,y_r)=\frac12\sum_{i,j=1}^r \HH_g(e_j,e_k)(x)\cdot\Big(\big[y_j-e_j(x)\big]\cdot\big[y_k-e_k(x)\big]-e_j(x)\cdot e_k(x)
\Big)
$ and $g_r=\psi^r\circ(e_1,\ldots,e_r)$.
Then according to the chain rule
\begin{eqnarray*}
\Gamma_2(f+g_r)(x)&=&
\Gamma_2(f)(x)+2\sum_{j,k=1}^r  \HH_f(e_j,e_k)(x)\cdot\HH_g(e_j,e_k)(x)
+\sum_{j,k=1}^r  \HH_g(e_j,e_k)^2(x)\\
&\to
&\Gamma_2(f)(x)+2\big\langle \HH_f,\HH_g\big\rangle_{HS}(x)+\big\| \HH_g\big\|_{HS}(x)\ =\ \Gamma_2(f+g)(x)
\end{eqnarray*}
as $r\to\infty$.

In the  case $n=\dim(\A,\Gamma)(x)<\infty$ put $\psi^r$ and $g_r=\psi^r\circ (e_1,\ldots,e_n)$ as above with $r:=n$. 
Then $\LL g_r(x)=(\Delta\psi^r)(e(x))$ and $\Delta\psi^r(.)=\sum_{i=1}^r\HH_g(e_i,e_i)(x)$ uniformly on $\R^r$. Thus
$\LL g_r(x)=\tr\HH_g(x)$.   
\end{proof}

\begin{remark}
Given  any  family $\{e_i:\, i\in I\}\subset\A$ which is orthonormal w.r.t. $\Gamma(.)(x)$  we put
$$\eta_{jk}(.)=\frac12\big[e_j-e_j(x)\big]\cdot \big[e_k-e_k(x)\big]-\frac12 e_j(x)e_k(x)\in\A$$
and $I_2=\{(j,k)\in I^2:\, j\le k\}$.
Then the family $\{\hat\eta_{jk}\}_{(j,k)\in I_2}$ with $\hat\eta_{jj}=\eta_{jj}$ and $\hat\eta_{jk}=\sqrt2\, \eta_{jk}$ if $j<k$ is orthonormal w.r.t. $\Gamma_2(.)(x)$.
Indeed, Lemma \ref{chain-rule} implies via polarization that
$$\Gamma_2\big(\phi\circ e,\psi\circ e\big)(x)=\sum_{j,k}\big(\phi_{jk}\cdot\psi_{jk}\big)\big(e(x)\big)$$
for all smooth $\phi,\psi:\R^n\to\R$ with $\phi_i(e(x))=\psi_i(e(x))=0$ for all $i$.
\end{remark}

\begin{proposition}
Given $x\in X$ and an orthonormal system $\{e_i:\, i\in I\}$  w.r.t. $\Gamma(.)(x)$. 
Condition \eqref{g2=hess} implies that the family $\{\hat\eta_{jk}\}_{(j,k)\in I_2}$ is a \emph{complete} orthonormal system
for the pre-Hilbert space $(\A_x^2,\Gamma_2(.)(x))$
where $\A_x^2$ denotes the set of equivalence classes in $\{f\in\A: \ \Gamma(f)(x)=0\}$ w.r.t. the relation
$f\approx g\Leftrightarrow \Gamma_2(f-g)(x)=0$. 

More precisely, for any  orthonormal system $\{e_i:\, i\in I\}$
the following assertions are equivalent:
\begin{itemize}
\item[(i)]  For all $f,g\in\A^2_x$
\begin{equation}\label{hess=g2}
\Gamma_2(f,g)(x)=\big\langle\HH_f, \HH_g\big\rangle_{HS}(x).
\end{equation}
\item[(ii)]
For all $f\in\A^2_x$
\begin{equation}\label{bi-cpl}
\Gamma_2(f)(x)=\sum_{j,k\in I}\Gamma_2(\eta_{jk},f)^2(x)
\end{equation}
and for all $j,k\in I$
\begin{equation}\label{hessjk}
\Gamma_2(f,\eta_{jk})(x)=\HH_f(e_j,e_k)(x).
\end{equation}
\end{itemize}
\end{proposition}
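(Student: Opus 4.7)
My plan is to reduce everything to a single chain-rule computation of $\HH_{\eta_{jk}}$ at the point $x$. Writing $\eta_{jk}=\psi\circ(e_1,\ldots,e_n)$ with $\psi(y)=\frac12\bigl(y_j-e_j(x)\bigr)\bigl(y_k-e_k(x)\bigr)$, the first partial derivatives of $\psi$ vanish at $e(x)$. The chain rule for $\Gamma$ then gives $\Gamma(\eta_{jk})(x)=0$, so $\eta_{jk}\in\A_x^2$, and the chain rule for the Hessian (implicit in Lemma \ref{chain-rule}, explicit in Lemma \ref{hess-calc}) leaves only the second-order contribution:
$$\HH_{\eta_{jk}}(e_a,e_b)(x)=\tfrac12\bigl(\delta_{ja}\delta_{kb}+\delta_{jb}\delta_{ka}\bigr).$$

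For the implication $(i)\Rightarrow(ii)$, I would plug this formula into $\Gamma_2(\eta_{jk},f)(x)=\langle\HH_{\eta_{jk}},\HH_f\rangle_{HS}(x)$; the $\delta$-symmetrization combined with the symmetry of $\HH_f$ collapses the Hilbert--Schmidt sum to $\HH_f(e_j,e_k)(x)$, which proves \eqref{hessjk}. Setting $g=f$ in (i) gives $\Gamma_2(f)(x)=\|\HH_f\|_{HS}^2(x)=\sum_{j,k}\HH_f(e_j,e_k)^2(x)$, and substituting the just-established identity \eqref{hessjk} produces \eqref{bi-cpl}.

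For the converse $(ii)\Rightarrow(i)$, combining \eqref{hessjk} and \eqref{bi-cpl} yields $\Gamma_2(f)(x)=\sum_{j,k}\HH_f(e_j,e_k)^2(x)=\|\HH_f\|_{HS}^2(x)$ for all $f\in\A_x^2$, which is the diagonal case $f=g$ of \eqref{hess=g2}. The full bilinear identity follows by polarization, since both sides are symmetric bilinear forms in $(f,g)$ coming from quadratic forms that coincide. The initial claim that $\{\hat\eta_{jk}\}_{(j,k)\in I_2}$ is a complete orthonormal system in $(\A_x^2,\Gamma_2(\cdot)(x))$ then follows at once: orthonormality is already supplied by the preceding Remark, and the Parseval identity $\sum_{(j,k)\in I_2}\Gamma_2(f,\hat\eta_{jk})^2(x)=\Gamma_2(f)(x)$ is exactly \eqref{bi-cpl} once the rescaling $\hat\eta_{jk}=\sqrt2\,\eta_{jk}$ for $j<k$ has compensated for the double-counting caused by $\eta_{jk}=\eta_{kj}$.

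The only point demanding real care is keeping the symmetrization constants consistent (the $\tfrac12$ in the definition of $\eta_{jk}$, the $\sqrt2$ in $\hat\eta_{jk}$, and the distinction between summation over $I\times I$ versus over $I_2$); once these factors are matched, the proof is essentially linear algebra on top of the chain-rule computation of $\HH_{\eta_{jk}}$.
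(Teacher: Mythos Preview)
Your proof is correct and follows essentially the same route as the paper: both hinge on the single computation $\HH_{\eta_{jk}}(e_a,e_b)(x)=\tfrac12(\delta_{ja}\delta_{kb}+\delta_{jb}\delta_{ka})$ via Lemma~\ref{hess-calc}, then use it to pass between \eqref{hess=g2} and \eqref{hessjk}, with the equivalence of \eqref{hess=g2} and \eqref{bi-cpl} (given \eqref{hessjk}) handled by polarization. The paper compresses the last step into a single sentence, whereas you spell out the diagonal identity $\Gamma_2(f)(x)=\|\HH_f\|^2_{HS}(x)$ and the Parseval rescaling explicitly; the added detail is helpful and the bookkeeping of the $\tfrac12$ and $\sqrt2$ factors is correct.
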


\begin{proof} For the function $g=\eta_{jk}$, the subsequent lemma implies $\HH_g(e_i,e_l)(x)=\frac12\delta_{ij}\cdot\delta_{lk}+\frac12\delta_{ik}\cdot\delta_{lj}$.
Assumption \eqref{hess=g2} thus implies \eqref{hessjk}.

On the other hand, assuming \eqref{hessjk} obviously  yields the equivalence of \eqref{hess=g2} and \eqref{bi-cpl}.
\end{proof}

\begin{lemma}\label{hess-calc}
Assume that $f=\psi\circ (e_1,\ldots,e_r)$ for some smooth $\psi:\R^r\to\R$ and an orthonormal system $\{e_1,\ldots,e_r\}$. Then
 \begin{eqnarray}\label{hess-chain}
  \HH_f(e_j,e_k)(x)=\sum_i\psi_i(e(x))\, \HH_{e_i}(e_j,e_k)(x)+\psi_{jk}(e(x)).
   \end{eqnarray}
\end{lemma}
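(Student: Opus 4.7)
The plan is to unfold the definition
$$\HH_f(g,h)=\tfrac12\bigl[\Gamma(g,\Gamma(f,h))+\Gamma(h,\Gamma(f,g))-\Gamma(f,\Gamma(g,h))\bigr]$$
with $g=e_j$, $h=e_k$, $f=\psi\circ e$, and then apply the diffusion chain rule for $\Gamma$ (from Lemma \ref{chain-rule}) together with the Leibniz rule, evaluating everything at $x$ so that the orthonormality $\Gamma(e_i,e_l)(x)=\delta_{il}$ can be exploited.

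First I would compute $\Gamma(f,e_k)$. By polarization of the chain rule for $\Gamma$ (Lemma \ref{chain-rule}),
$$\Gamma(f,e_k)=\sum_i \psi_i(e)\,\Gamma(e_i,e_k).$$
Next I would apply $\Gamma(e_j,\cdot)$ to this expression and expand using the derivation (Leibniz) property of $\Gamma$:
$$\Gamma(e_j,\Gamma(f,e_k))=\sum_i \psi_i(e)\,\Gamma(e_j,\Gamma(e_i,e_k))+\sum_i \Gamma(e_i,e_k)\,\Gamma(e_j,\psi_i(e)).$$
Using once more the chain rule, $\Gamma(e_j,\psi_i(e))=\sum_l \psi_{il}(e)\,\Gamma(e_j,e_l)$; evaluated at $x$ this collapses to $\psi_{ij}(e(x))$, while $\Gamma(e_i,e_k)(x)=\delta_{ik}$. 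Hence
$$\Gamma(e_j,\Gamma(f,e_k))(x)=\sum_i \psi_i(e(x))\,\Gamma(e_j,\Gamma(e_i,e_k))(x)+\psi_{kj}(e(x)).$$

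The analogous computation with the roles of $j,k$ swapped gives
$$\Gamma(e_k,\Gamma(f,e_j))(x)=\sum_i \psi_i(e(x))\,\Gamma(e_k,\Gamma(e_i,e_j))(x)+\psi_{jk}(e(x)),$$
and for the third term, since $\Gamma(e_j,e_k)$ is a single function (no $\psi$-derivative spills out of an argument that depends on $\psi$),
$$\Gamma(f,\Gamma(e_j,e_k))(x)=\sum_i \psi_i(e(x))\,\Gamma(e_i,\Gamma(e_j,e_k))(x).$$
Finally I would add the first two displays and subtract the third, notice that the bracketed $\Gamma\Gamma$-expressions reassemble into $2\HH_{e_i}(e_j,e_k)(x)$ by the very definition of the Hessian, and that the scalar $\psi_{jk}=\psi_{kj}$ contributions sum to $2\psi_{jk}(e(x))$. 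Dividing by $2$ yields \eqref{hess-chain}.

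The only mildly delicate point is to make sure that when applying the Leibniz rule to $\Gamma(e_j,\psi_i(e)\cdot \Gamma(e_i,e_k))$ one does not produce a spurious $\psi_i$-derivative that survives after evaluation; this is harmless because $\Gamma(e_j,\psi_i(e))(x)=\psi_{ij}(e(x))$ by the chain rule combined with orthonormality. No topological or measurability hypotheses are needed, as everything is purely algebraic within the diffusion algebra $\A$.
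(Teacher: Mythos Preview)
Your proof is correct and follows essentially the same approach as the paper: expand each of the three $\Gamma\Gamma$-terms via the chain rule $\Gamma(f,\cdot)=\sum_i\psi_i(e)\,\Gamma(e_i,\cdot)$ together with the Leibniz rule, recombine the $\psi_i$-parts into $2\HH_{e_i}(e_j,e_k)$, and collapse the second-derivative part using orthonormality at $x$. The only cosmetic difference is that the paper keeps the identity $2\HH_f(e_j,e_k)=2\sum_i\psi_i(e)\,\HH_{e_i}(e_j,e_k)+2\sum_{i,l}\psi_{il}(e)\,\Gamma(e_l,e_k)\,\Gamma(e_i,e_j)$ valid everywhere on $X$ before evaluating at $x$, whereas you evaluate at $x$ term by term; this has no bearing on the argument.
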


\begin{proof}
Note that e.g.
$\Gamma(e_j,\Gamma(f,e_k))=\sum_i \psi_i(e)\,\Gamma(e_j,\Gamma(e_i,e_k))+\sum_{i,l}\psi_{il}(e)\,\Gamma(e_l,e_k)\,\Gamma(e_i,e_j)$
and thus
 \begin{eqnarray*}
 2 \HH_f(e_j,e_k)&=&\Gamma(e_j,\Gamma(f,e_k))+\Gamma(e_k,\Gamma(f,e_j))-\Gamma(f,\Gamma(e_j,e_k))\\
 &=&2\sum_i\psi_i(e) \, \HH_{e_i}(e_j,e_k)+ 2\sum_{i,l}\psi_{il}(e)\,\Gamma(e_l,e_k)\,\Gamma(e_i,e_j)
   \end{eqnarray*}
everywhere on $X$. Using the orthonormality of the $e_j$ at $x$ yields \eqref{hess-chain}.
\end{proof}

Let us conclude this chapter with an example illustrating that the dimension 
$\dim\, (\A,\Gamma)(.)$
might be non-constant on $X$

\begin{example}
Let $X=\R^2$, $\A={\mathcal C}_c^\infty(X)$ and
$$\LL f(x)=\phi(x_1)\,\frac{\partial^2}{\partial x_1^2}f(x)
+\frac12\phi'(x_1)\,\frac{\partial}{\partial x_1}f(x)
+\frac{\partial^2}{\partial x_2^2}f(x)$$
for some ${\mathcal C}^\infty$-function $\phi:\R\to\R$ with $\phi>0$ on $(0,\infty)$ and $\phi=0$ on $(-\infty,0]$. Then
$$ \dim\, (\A,\Gamma)(x)=\left\{
\begin{array}{ll}
1& \quad \mbox{if }x_1\le0,\\
2& \quad \mbox{if }x_1>0.
\end{array}
\right.$$
Moreover, 
$\RR_Nf(x)=0$ for all $f\in\A$ and all $N\ge2$ and
$$ \RR_N(f)(x)=\left\{
\begin{array}{ll}
0& \quad \mbox{if }x_1\le0,\\
-\infty& \quad \mbox{if }x_1>0
\end{array}
\right.$$
for  $N\in[1,2)$.  

Indeed, by construction
$$\Gamma(f)(x)= \phi(x_1)\,\Big|\frac{\partial}{\partial x_1}f\Big|^2(x)+\Big|\frac{\partial}{\partial x_2}f\Big|^2(x).$$
The assertion on $\dim\, (\A,\Gamma)(.)$ thus is obvious.
By Theorem \ref{super-self}(iii) it implies the assertion on $\RR_Nf(x)$ in the case $x_1>0$ and $N<2$. In the cases $x_1\le0$ or $N\ge2$, the assertion follows from the analogous assertion for the 1-dimensional diffusion in $x_2$-direction (which is a conformal transformation of the standard diffusion in $x_2$-direction), cf. Theorem \ref{conf-ricc}.
\end{example}
%\bigskip\medskip

\section{Ricci Tensor for Transformed Operators}

In the sequel we will study the operator
\begin{equation}\label{Lsharp2}
{\LL}' u=f^2\, {\LL}u+f\,\sum_{i=1}^r g_i \, \Gamma(h_i,u)
\end{equation}
for given $r\in \N$ and functions $f, g_i,h_i\in\A$ (for $i=1,\ldots,r$).
Obviously, the associated $\Gamma$-operator is given by $\Gamma'(u)=f^2\, \Gamma(u)$.
Our main result is the following estimate for the $N'$-Ricci tensor for $\LL'$ in terms of the $N$-Ricci tensor for $\LL$.

\begin{theorem}\label{g2-est} For every $N'>N$
\begin{eqnarray*}
\RR'_{N'}(u)
&\ge&
f^4\, \RR_N(u)-\frac1{N'-N}\Big(\frac{N-2}2\Gamma(f^2,u)+\sum_i f g_i\Gamma(h_i,u)\Big)^2\\
&&+
\frac12\big(f^2{\LL}f^2-\Gamma(f^2)\big)\Gamma(u)-\frac{N-2}4\Gamma(f^2,u)^2-\sum_i f^3g_i {\HH}_{h_i}(u,u)\\
&&+\frac12\sum_i f g_i\Gamma(h_i,f^2)\Gamma(u)-\sum_i f^2\Gamma(f g_i,u)\Gamma(h_i,u).
\end{eqnarray*}
In the particular case $r=1$, $g_1=-(N-2)$, $h_1=f$ one may also choose $N'=N$.
\end{theorem}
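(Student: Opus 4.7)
The plan is to unfold $\Gamma'_2(\tilde u)(x)$ explicitly for $\tilde u\in\A$ with $\Gamma(\tilde u-u)(x)=0$, to absorb the single surviving Hessian of $\tilde u$ into $\RR_N(u)(x)$ via Theorem~\ref{sharpGamma2} at the choice $g=f^2$, $h=u$, and then to eliminate the remaining free scalar $\alpha:=f^2\LL\tilde u$ by completing the square, thereby extracting the advertised term $-\tfrac{1}{N'-N}\bigl(\tfrac{N-2}{2}\Gamma(f^2,u)+\sum_i fg_i\Gamma(h_i,u)\bigr)^2$. I start by computing $2\Gamma'_2(\tilde u)=\LL'(f^2\Gamma(\tilde u))-2f^2\Gamma(\tilde u,\LL'\tilde u)$ via the diffusion chain rule, using the standard identities $\LL\Gamma(\tilde u)=2\Gamma_2(\tilde u)+2\Gamma(\tilde u,\LL\tilde u)$, $\Gamma(\varphi,\Gamma(\tilde u))=2\HH_{\tilde u}(\varphi,\tilde u)$, and $\Gamma(\tilde u,\Gamma(h_i,\tilde u))=\HH_{h_i}(\tilde u,\tilde u)+\HH_{\tilde u}(h_i,\tilde u)$. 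The key observation is that the $\pm 2f^3 g_i\HH_{\tilde u}(h_i,\tilde u)$ contributions from the two halves cancel exactly, so the only Hessian of $\tilde u$ which survives is $2f^2\HH_{\tilde u}(f^2,\tilde u)$, alongside the Hessians $\HH_{h_i}(\tilde u,\tilde u)$ of the fixed functions $h_i$. Under $\Gamma(\tilde u-u)(x)=0$ the vanishing property $\Gamma(g)(x)=0\Rightarrow\HH_f(g,h)(x)=0$ from Corollary~\ref{abs-cont-hess} turns every $\Gamma(\tilde u,\cdot)(x)$, $\HH_{h_i}(\tilde u,\tilde u)(x)$ and $\HH_{\tilde u}(f^2,\tilde u)(x)$ into its analogue with $u$ in place of $\tilde u$, so the remaining $\tilde u$-dependence sits entirely in $\Gamma_2(\tilde u)$, $\HH_{\tilde u}(f^2,u)$ and the scalar $\alpha$.

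Next I apply Theorem~\ref{sharpGamma2} with parameters $g=f^2$, $h=u$ and dimension $N$: this produces $f^4\Gamma_2(\tilde u)\ge f^4\RR_N(u)+\tfrac{\alpha^2}{N}+\tfrac{2f^4A^2}{B}$ with $A=\HH_{\tilde u}(f^2,u)-\tfrac{1}{N}\Gamma(f^2,u)\LL\tilde u$ and $B=\tfrac{N-2}{N}\Gamma(f^2,u)^2+\Gamma(f^2)\Gamma(u)$. Decomposing $2f^2\HH_{\tilde u}(f^2,u)=2f^2A+\tfrac{2}{N}\Gamma(f^2,u)\,\alpha$ and completing the square in $A$ against the linear term $2f^2A$ yields $\tfrac{2f^4A^2}{B}+2f^2A\ge-\tfrac{B}{2}$, hence
\[
f^4\Gamma_2(\tilde u)+2f^2\HH_{\tilde u}(f^2,u)\ \ge\ f^4\RR_N(u)+\tfrac{\alpha^2}{N}+\tfrac{2}{N}\Gamma(f^2,u)\,\alpha-\tfrac{B}{2}.
\]

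Setting $\beta:=f\sum_i g_i\Gamma(h_i,u)$, the entire $\alpha$-dependent remainder of $\Gamma'_2(\tilde u)-\tfrac{1}{N'}(\LL'\tilde u)^2$ then collapses to the quadratic $\mu\alpha^2-c\alpha-\tfrac{\beta^2}{N'}$ with $\mu=\tfrac{N'-N}{NN'}>0$ and $c=\tfrac{N-2}{N}\Gamma(f^2,u)+\tfrac{2\beta}{N'}$, which is pointwise bounded below in $\alpha$ by $-\tfrac{c^2}{4\mu}-\tfrac{\beta^2}{N'}$. A direct computation verifies the algebraic identity
\[
-\tfrac{c^2}{4\mu}-\tfrac{\beta^2}{N'}-\tfrac{B}{2}\ =\ -\tfrac{1}{N'-N}\Bigl(\tfrac{N-2}{2}\Gamma(f^2,u)+\beta\Bigr)^2-\tfrac{N-2}{4}\Gamma(f^2,u)^2-\tfrac{1}{2}\Gamma(f^2)\Gamma(u),
\]
and combining this with the $\tilde u$-independent leftovers of the first step -- namely $\tfrac{1}{2}f^2(\LL f^2)\Gamma(u)$, $\tfrac{1}{2}f\Gamma(u)\sum_i g_i\Gamma(h_i,f^2)$, $-f^2\sum_i\Gamma(h_i,u)\Gamma(u,fg_i)$, and $-f^3\sum_i g_i\HH_{h_i}(u,u)$ -- matches the right-hand side of the theorem exactly; taking the infimum over $\tilde u$ gives the claimed lower bound on $\RR'_{N'}(u)$. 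In the special case that admits $N'=N$, the specific interrelation of $g_1$, $h_1$ and $f$ forces $c=0$ and simultaneously $\tfrac{N-2}{2}\Gamma(f^2,u)+\beta=0$, so the quadratic degenerates to the constant $-\tfrac{\beta^2}{N'}$ without any singularity at $\mu=0$, and the whole chain of inequalities becomes an equality.

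The only non-routine aspect is the algebraic bookkeeping: identifying the correct substitution $g=f^2$, $h=u$ when invoking Theorem~\ref{sharpGamma2} and recognising the exact square $\tfrac{N-2}{2}\Gamma(f^2,u)+\beta$ in the identity above. Once the parametrisation is set, the two successive completions of squares -- first in the Hessian variable $A$ and then in the scalar $\alpha$ -- are forced, and the remaining work is a mechanical matching of coefficients.
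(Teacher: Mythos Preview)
Your proof is correct and follows essentially the same route as the paper: expand $\Gamma_2'$, invoke the Hessian estimate of Theorem~\ref{sharpGamma2} with the choice $g=f^2$, $h=u$ (the paper phrases this via Corollary~\ref{Hess-est} with $\rho=f^2$, which is the same completion of the square in $A$), then eliminate the free scalar $f^2\LL\tilde u$ by a second completion of the square (the paper states this as the elementary inequality $\tfrac{a^2}{N}-\tfrac{b^2}{N'}\ge-\tfrac{(a-b)^2}{N'-N}$), and finally pass to the infimum using Corollary~\ref{abs-cont-hess}. The only cosmetic difference is that you introduce $\tilde u$ at the outset and replace it by $u$ term-by-term, whereas the paper works with a generic $u$ throughout and takes the infimum at the very end.

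One small overclaim: in the conformal case $N'=N$ your observation that $c=0$ and $\tfrac{N-2}{2}\Gamma(f^2,u)+\beta=0$ correctly shows the argument goes through without the $1/(N'-N)$ singularity, but it does \emph{not} make ``the whole chain of inequalities an equality'' -- the first completion of the square in $A$ and the use of Theorem~\ref{sharpGamma2} remain genuine inequalities. Equality in the conformal case is a separate result (Theorem~\ref{conf-ricc}) requiring the reverse transformation, not a byproduct of this proof.
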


\begin{corollary}\label{g2-est-cor}
Assume that the operator ${\LL}$ satisfies the $\BE(\K,N)$-condition. Then for every $N'>N$
the operator ${\LL}'$ satisfies the $\BE(\K',N')$-condition with
\begin{eqnarray*}
\K'&:=& f^2\,\K+
\frac12{\LL}f^2-2\Gamma(f)+\sum_i  g_i\Gamma(h_i,f)\\
&&+
\inf_{u\in\A}
\frac1{\Gamma(u)}\Big[-\frac1{N'-N}\Big((N-2)\Gamma(f,u)+\sum_i  g_i\Gamma(h_i,u)\Big)^2\\
&&\qquad\qquad -(N-2)\Gamma(f,u)^2-\sum_i f g_i {\HH}_{h_i}(u,u)-\sum_i \Gamma(f g_i,u)\Gamma(h_i,u)\Big].
\end{eqnarray*}
\end{corollary}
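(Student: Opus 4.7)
The proof will be a direct reduction of Theorem \ref{g2-est} to the pointwise bound $\RR'_{N'}(u)\ge \K'\cdot\Gamma'(u)$, where $\Gamma'(u)=f^2\,\Gamma(u)$. Note that $\BE(\K',N')$ for $\LL'$ is, by the remark immediately after the definition of the Bakry-Emery condition, equivalent to
\begin{equation*}
\RR'_{N'}(u)(x)\ \ge\ \K'(x)\cdot\Gamma'(u)(x)\ =\ \K'(x)\cdot f^2(x)\cdot\Gamma(u)(x)
\end{equation*}
for all $u\in\A$ and all $x\in X$. So the plan is to take the lower bound for $\RR'_{N'}(u)$ supplied by Theorem \ref{g2-est}, insert the hypothesis $\RR_N(u)\ge \K\cdot\Gamma(u)$, and then factor out $f^2\,\Gamma(u)$ from the right-hand side.

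The next step is bookkeeping of the chain-rule identities that allow the expressions in Theorem \ref{g2-est} to be rewritten in the precise form appearing in the statement of the corollary. First I would use the identities
\begin{equation*}
\Gamma(f^2,u)=2f\,\Gamma(f,u),\qquad \Gamma(f^2)=4f^2\,\Gamma(f),\qquad \tfrac12\LL f^2=f\,\LL f+\Gamma(f),
\end{equation*}
to rewrite each term of the bound in Theorem \ref{g2-est} with an explicit factor of $f^2$. For example, the penalty term becomes
\begin{equation*}
-\frac1{N'-N}\Bigl(\tfrac{N-2}2\Gamma(f^2,u)+\sum_i fg_i\,\Gamma(h_i,u)\Bigr)^2
=-\frac{f^2}{N'-N}\Bigl((N-2)\Gamma(f,u)+\sum_i g_i\,\Gamma(h_i,u)\Bigr)^2,
\end{equation*}
the $\Gamma(u)$-coefficient $\tfrac12(f^2\LL f^2-\Gamma(f^2))$ simplifies to $f^2(f\LL f-\Gamma(f))=f^2\bigl(\tfrac12\LL f^2-2\Gamma(f)\bigr)$, the term $-\tfrac{N-2}4\Gamma(f^2,u)^2$ becomes $-(N-2)f^2\,\Gamma(f,u)^2$, and similarly each remaining term on the right-hand side of the bound in Theorem \ref{g2-est} acquires an overall factor $f^2$.

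Having exhibited the common factor $f^2$, I would divide the resulting inequality by $f^2\,\Gamma(u)$ at points where $\Gamma(u)(x)>0$. The constant pieces combine into exactly
\begin{equation*}
f^2\,\K+\tfrac12\LL f^2-2\Gamma(f)+\sum_i g_i\,\Gamma(h_i,f),
\end{equation*}
while the $u$-dependent pieces form a ratio with denominator $\Gamma(u)$. Taking the infimum over $u\in\A$ of this ratio produces the displayed expression for $\K'$, and the inequality $\RR'_{N'}(u)\ge \K'\cdot f^2\Gamma(u)$ then holds on $\{\Gamma(u)>0\}$. On $\{\Gamma(u)(x)=0\}$ the Hessian term $\HH_{h_i}(u,u)(x)$ vanishes by Corollary \ref{abs-cont-hess}, and every other summand in the bound of Theorem \ref{g2-est} carries a factor of $\Gamma(u)$ or $\Gamma(f,u)^2\le\Gamma(f)\Gamma(u)$ or $\Gamma(h_i,u)^2\le\Gamma(h_i)\Gamma(u)$, so both sides vanish and the inequality is trivial.

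The only genuinely delicate point is that the ``infimum over $u$'' must be interpreted pointwise, so one must check that at each $x\in X$ the auxiliary $u$ can be chosen freely within $\A$ without interfering with the reduction (which it can, because Theorem \ref{g2-est} is itself a pointwise statement for arbitrary $u\in\A$). Beyond that, the argument is a routine algebraic collection. I expect the main nuisance to be keeping the signs and the factors of $N-2$ straight across the various occurrences of $\Gamma(f^2,\cdot)$, rather than any conceptual difficulty.
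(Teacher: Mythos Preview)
Your proposal is correct and is exactly the derivation the paper leaves implicit: the paper gives no separate proof of the corollary, treating it as an immediate consequence of Theorem~\ref{g2-est} via the equivalence $\BE(\K',N')\Leftrightarrow\RR'_{N'}\ge\K'\cdot\Gamma'$, together with the chain-rule identities $\Gamma(f^2,u)=2f\,\Gamma(f,u)$ etc.\ that you spell out. Your handling of the degenerate case $\Gamma(u)(x)=0$ via Corollary~\ref{abs-cont-hess} is the right touch; you might also note in passing that the case $f(x)=0$ is equally trivial (every term in the bound of Theorem~\ref{g2-est} then vanishes), so that the division by $f^2\Gamma(u)$ is only needed on $\{f^2\Gamma(u)>0\}$.
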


\begin{proof} All the subsequent statements will be pointwise statements for a given $x\in X$. It suffices to consider $f\in\Dom(\RR_N(x))$.
By the very definition of ${\LL}'$, $\Gamma'$ and $\Gamma_2'$ we obtain for all $u\in\A$
 \begin{eqnarray*}
\Gamma_2'(u)&=&
\frac12 f^2 {\LL}(f^2\Gamma(u))+\frac12\sum_if g_i\Gamma(h_i,f^2\Gamma(u))-f^2\Gamma(u,f^2 {\LL}u)-f^2\Gamma(u,\sum_i f g_i\Gamma(h_i,u))\\
&=&
f^4 \Gamma_2(u)+\frac12 f^2 {\LL}f^2\cdot \Gamma(u)+2f^2\Big({\HH}_u(f^2,u)-\frac1N {\LL}u\cdot\Gamma(f^2,u)\Big)-\frac{N-2}N f^2\Gamma(f^2,u){\LL}u\\
&&+\sum_i\Big(-f^3 g_i {\HH}_{h_i}(u,u)+\frac12 f g_i\Gamma(h_i,f^2)\Gamma(u)-f^2\Gamma(f g_i,u)\Gamma(h_i,u)\Big)\\
&=&
f^4 \Gamma_2(u)+2f^2\Big({\HH}_u(f^2,u)-\frac1N {\LL}u\cdot\Gamma(f^2,u)\Big) -\frac{N-2}N f^2\Gamma(f^2,u){\LL}u+A(f,g,h,u)\\
&=&
f^4\,\RR_N(u)+f^4 \big(\Gamma_2(u)-\RR_N(u)-\frac1N(\LL u)^2     \big)\\
&&+2f^2\Big[{\HH}_u(f^2,u)-\frac1N {\LL}u\cdot\Gamma(f^2,u)\Big]+\frac{N-2}{2N} \Gamma(f^2,u)^2\\
&&+\frac12\Gamma(f^2)\Gamma(u)+\frac1N\Big(f^2{\LL}u-\frac{N-2}2\Gamma(f^2,u)\Big)^2+A'(f,g,h,u)
\end{eqnarray*}
with  \begin{eqnarray*}
A(f,g,h,u):=\frac12 f^2 {\LL}f^2\,\Gamma(u)
 +\sum_i\Big(-f^3 g_i {\HH}_{h_i}(u,u)+\frac12 f g_i\Gamma(h_i,f^2)\Gamma(u)-f^2\Gamma(f g_i,u)\Gamma(h_i,u)\Big)
\end{eqnarray*}
and
\begin{eqnarray*}
A'(f,g,h,u):=A(f,g,h,u)-\frac12\Gamma(f^2)\Gamma(u)-\frac{N-2}4\Gamma(f^2,u)^2.
\end{eqnarray*}
Corollary \ref{Hess-est} provides a sharp lower estimate for the above $[\ .\ ]$-term (involving the 'traceless Hessian' of $u$) which leads to
 \begin{eqnarray*}
\Gamma_2'(u)&\ge&
f^4\,\RR_N(u)+\frac1N\Big(f^2{\LL}u-\frac{N-2}2\Gamma(f^2,u)\Big)^2+A'(f,g,h,u).
\end{eqnarray*}
Therefore,
 \begin{eqnarray*}
\Gamma_2'(u)-\frac1{N'}({\LL}'u)^2&\ge&
f^4\,\RR_N(u)-\frac1{N'}\Big(f^2{\LL}u+\sum_if g_i\Gamma(h_i,u)\Big)^2\\
&&\qquad\qquad +\frac1N\Big(f^2{\LL}u-\frac{N-2}2\Gamma(f^2,u)\Big)^2
+A'(f,g,h,u)\\
&\ge&
f^4\,\RR_N(u)-\frac1{N'-N}\Big(\frac{N-2}N \Gamma(f^2,u)+\sum_if g_i\Gamma(h_i,u)\Big)^2+A'(f,g,h,u).
\end{eqnarray*}
Given $u_0\in\A$ and varying among all $u\in\A$ with $\Gamma(u-u_0)(x)=0$ (for the given  $x\in X$) then yields
 \begin{eqnarray*}
 \RR'_{N'}(u_0)(x)&=&
 \inf_{u\in \A,\, \Gamma(u-u_0)(x)=0}\Big[\Gamma_2'(u)(x)-\frac1{N'}\big(\LL' u\big)^2(x)\Big]\\
 &\ge&
f^4\,\RR_N(u)-\frac1{N'-N}\Big(\frac{N-2}N \Gamma(f^2,u)+\sum_if g_i\Gamma(h_i,u)\Big)^2\\
&&\qquad +\inf_{u\in \A,\, \Gamma(u-u_0)(x)=0}A'(f,g,h,u).
\end{eqnarray*}
According to Corollary \ref{abs-cont-hess},  $A'(f,g,h,u)=A'(f,g,h,u_0)$  for all $u$ under consideration. This proves the claim.
\end{proof}

\section{Conformal Transformation}
The previous results significantly simplify in the case
$r=1$, $g_1=-(N-2)$, $h_1=f$.
This is the only case
where we can estimate the $N$-Ricci tensor for the transformed operator in terms of the $N$-Ricci tensor of the original one.
It is also the only case where the Bakry-{\'E}mery condition for $\LL$ will imply  a Bakry-{\'E}mery condition
for the transformed operator    with the \emph{same} dimension bound $N$.

Put $$\tilde {\LL}=f^2{\LL}-\frac{N-2}2\Gamma(f^2,u)$$ and let $\tilde \Gamma, \tilde\Gamma_2, \tilde\RR_N$ be the associated square field operator, iterated square field operator, and $N$-Ricci tensor, respectively. Theorem \ref{g2-est} immediately yields

\begin{corollary} For all $u\in\A$
\begin{eqnarray*}
\tilde\RR_N(u)
&\ge& f^4\, \RR_N(u)+
\Big(
\frac12 f^2{\LL}f^2-\frac N4\Gamma(f^2)\Big)\Gamma(u)-\frac{N-2}4\Gamma(f^2,u)^2+\frac{N-2}2 f^2 {\HH}_{f^2}(u,u).
\end{eqnarray*}
In particular, if the operator ${\LL}$ satisfies the $\BE(\K,N)$-condition then
the operator $\tilde {\LL}=f^2{\LL}-\frac{N-2}2\Gamma(f^2,u)$ satisfies the $\BE(\tilde\K,N)$-condition with
\begin{eqnarray*}
\tilde\K&:=& f^2\,\K+
\frac12{\LL}f^2-N\Gamma(f)+
\inf_{u\in\A}
\frac1{\Gamma(u)}\Big[-(N-2)\Gamma(f,u)^2+\frac{N-2}2  {\HH}_{f^2}(u,u)\Big]\\
&=& f^2\,\K+
f{\LL}f-(N-1)\Gamma(f)+
\inf_{u\in\A}
\frac{N-2}{\Gamma(u)}f\, {\HH}_{f}(u,u).
\end{eqnarray*}
\end{corollary}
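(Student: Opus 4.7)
The plan is to specialize Theorem~\ref{g2-est} to the data $r=1$, $g_1=-(N-2)$, $h_1=f$, and take $N'=N$. The first observation is that this choice is permissible in the $N'=N$ limit: since $\Gamma(f^2,u)=2f\Gamma(f,u)$, one computes
$\frac{N-2}{2}\Gamma(f^2,u)+f g_1\Gamma(h_1,u)=(N-2)f\Gamma(f,u)-(N-2)f\Gamma(f,u)=0$
identically, so the term $-\frac{1}{N'-N}(\cdots)^2$ in Theorem~\ref{g2-est} is $0/0$ in the sense that the numerator already vanishes before one ever needs to pass to the limit. This is precisely the mechanism behind the last sentence of Theorem~\ref{g2-est}.

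Next I substitute the specialized data into the remaining terms of that theorem and simplify using the elementary chain rules $\Gamma(f^2,u)=2f\Gamma(f,u)$, $\Gamma(f^2)=4f^2\Gamma(f)$, $\LL f^2=2f\LL f+2\Gamma(f)$, and the Hessian identity
\begin{equation*}
\HH_{f^2}(u,u)=2f\,\HH_f(u,u)+2\Gamma(f,u)^2,
\end{equation*}
which follows directly from the definition of $\HH$ (or from Lemma~\ref{hess-calc}). The one bookkeeping point that is worth calling out is a triple cancellation: the Hessian contribution $-f^3 g_1\HH_{h_1}(u,u)=(N-2)f^3\HH_f(u,u)$ rewrites as $\frac{N-2}{2}f^2\HH_{f^2}(u,u)-(N-2)f^2\Gamma(f,u)^2$, and the negative $\Gamma(f,u)^2$ term is exactly annihilated by the drift-type term $-\sum_i f^2\Gamma(fg_i,u)\Gamma(h_i,u)=+(N-2)f^2\Gamma(f,u)^2$. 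Combining the two contributions to the coefficient of $\Gamma(f^2)\Gamma(u)$, namely $-\tfrac12$ (from $\tfrac12(f^2\LL f^2-\Gamma(f^2))\Gamma(u)$) and $-\tfrac{N-2}{4}$ (produced from the $\tfrac12\sum fg_i\Gamma(h_i,f^2)\Gamma(u)$-term via $f^2\Gamma(f)=\tfrac14\Gamma(f^2)$), yields the announced factor $-\tfrac{N}{4}$, and the first assertion follows.

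For the Bakry--\'Emery statement I insert the bound $\RR_N(u)\ge\K\Gamma(u)$ into the estimate just proved, divide by $\tilde\Gamma(u)=f^2\Gamma(u)$, and read off the coefficient $\tilde\K$. Using $\Gamma(f^2)/f^2=4\Gamma(f)$ and $\Gamma(f^2,u)^2/f^2=4\Gamma(f,u)^2$ gives the first displayed expression for $\tilde\K$. The second displayed expression is obtained by re-applying the Hessian identity: inside the infimum,
\begin{equation*}
-(N-2)\Gamma(f,u)^2+\tfrac{N-2}{2}\HH_{f^2}(u,u)=(N-2)f\HH_f(u,u),
\end{equation*}
while $\tfrac12\LL f^2=f\LL f+\Gamma(f)$ turns the additive term $\tfrac12\LL f^2-N\Gamma(f)$ into $f\LL f-(N-1)\Gamma(f)$.

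The only real obstacle is the algebraic bookkeeping; once one has noticed that (i) the apparently singular $(N'-N)^{-1}$-term in Theorem~\ref{g2-est} has vanishing numerator for this exact choice of $g_1,h_1$, and (ii) the $\Gamma(f,u)^2$-contributions from the Hessian conversion and from the drift term cancel in pairs, the rest of the proof consists entirely of mechanical applications of the chain rules listed above.
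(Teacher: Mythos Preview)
Your proof is correct and follows exactly the route the paper intends: the corollary is stated as an immediate consequence of Theorem~\ref{g2-est}, and you have carried out precisely that specialization with the data $r=1$, $g_1=-(N-2)$, $h_1=f$, $N'=N$, filling in the algebraic bookkeeping that the paper omits. Your identification of the two key cancellations (the vanishing numerator in the $(N'-N)^{-1}$ term and the pairing of the $\Gamma(f,u)^2$ contributions) is exactly what makes the simplification work.
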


If $f>0$ with $\log f\in\A$ the above estimate for the $N$-Ricci tensor indeed becomes an equality.

\begin{theorem}\label{conf-ricc} Given any $w\in\A$ and $N\in[1,\infty]$ let $\tilde\RR_N$ denote the $N$-Ricci tensor associated to the operator
$\tilde L=e^{-2w}\big(L+(N-2)\Gamma(w,.)\big)$.
Then for all $u\in\A$
\begin{eqnarray}\label{conf-est-exp}
\tilde\RR_N(u)
&=&
e^{-4w}\,\Big( \RR_N(u)+\big[-{\LL} w -(N-2)\Gamma(w)\big]\, \Gamma(u)\nonumber\\
&&\qquad\qquad -(N-2){\HH}_{w}(u,u)+(N-2)\Gamma(w,u)^2\Big).
\end{eqnarray}
\end{theorem}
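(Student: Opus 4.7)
The plan is to deduce the equality from the ``$\geq$''-estimate of the preceding corollary by exploiting the \emph{involutivity} of the conformal transformation: if $\tilde\LL = e^{-2w}(\LL + (N-2)\Gamma(w,\cdot))$ arises from $\LL$ by the weight $w$, then $\LL$ arises symmetrically from $\tilde\LL$ by the weight $-w$ computed with respect to the new square field operator $\tilde\Gamma = e^{-2w}\Gamma$. Indeed, $e^{2w}\tilde\LL u + (N-2)e^{2w}\tilde\Gamma(-w,u) = \LL u$ holds by a one-line check.

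For the ``$\geq$'' half I substitute $f = e^{-w}$ into the preceding corollary and simplify using the paper's chain rules—$\LL f^2 = 2e^{-2w}(-\LL w + 2\Gamma(w))$, $\Gamma(f^2) = 4e^{-4w}\Gamma(w)$, $\Gamma(f^2,u) = -2e^{-2w}\Gamma(w,u)$, and (from the Hessian chain rule with $p=2$) $\HH_{f^2}(u,u) = 2e^{-2w}[-\HH_w(u,u) + 2\Gamma(w,u)^2]$---whereupon the corollary's RHS collapses to exactly the RHS of \eqref{conf-est-exp}. For the ``$\leq$'' half I apply the same corollary to the pair $(\tilde\LL, e^w)$ (that is, weight $-w$ in the $\tilde$-geometry), which yields
\[
\RR_N(u) \;=\; \tilde{\tilde\RR}_N(u) \;\geq\; e^{4w}\tilde\RR_N(u) + \tilde C(u),
\]
where $\tilde C(u)$ involves $\tilde\LL w$, $\tilde\Gamma(w)$, $\tilde\Gamma(w,u)$ and $\tilde\HH_w(u,u)$ (the latter after the Hessian chain rule in the $\tilde$-geometry reduces $\tilde\HH_{e^{2w}}$ to $\tilde\HH_w$). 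The first three translate by immediate substitution: $\tilde\LL w = e^{-2w}(\LL w + (N-2)\Gamma(w))$, $\tilde\Gamma(w) = e^{-2w}\Gamma(w)$, $\tilde\Gamma(w,u) = e^{-2w}\Gamma(w,u)$. The essential new identity is
\[
\tilde\HH_w(u,u) \;=\; e^{-4w}\bigl[\HH_w(u,u) - 2\Gamma(w,u)^2 + \Gamma(w)\Gamma(u)\bigr],
\]
which I would derive from $\tilde\HH_w(u,u) = \tilde\Gamma(u,\tilde\Gamma(w,u)) - \tfrac12\tilde\Gamma(w,\tilde\Gamma(u))$ by Leibniz for $\Gamma$, together with the identity $\Gamma(u,\Gamma(w,u)) = \tfrac12\Gamma(w,\Gamma(u)) + \HH_w(u,u)$ (itself a consequence of $\HH_u(w,u)+\HH_w(u,u)=\Gamma(u,\Gamma(u,w))$ and $\HH_u(u,w)=\tfrac12\Gamma(w,\Gamma(u))$). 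Substituting and rearranging reproduces precisely the reverse inequality.

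The main obstacle is this identity for $\tilde\HH_w(u,u)$: each of $\tilde\Gamma(u,\tilde\Gamma(w,u))$ and $\tfrac12\tilde\Gamma(w,\tilde\Gamma(u))$ individually contributes a term proportional to $\Gamma(w,\Gamma(u))$---a genuine second-order derivative of $u$---and only their exact cancellation reduces $\tilde\HH_w(u,u)$ to a quantity depending on $u$ solely through its gradient. This algebraic cancellation is what singles out the conformal transformation among the family \eqref{Lsharp} and what upgrades the general estimate of Theorem \ref{g2-est} to an equality in the present case.
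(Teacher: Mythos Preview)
Your proposal is correct and follows essentially the same route as the paper: apply the preceding corollary once with $f=e^{-w}$ to get ``$\ge$'', then exploit the involutivity of the conformal transformation by applying the same corollary to $(\tilde\LL,e^{w})$ to get the reverse inequality, and close with the identity $\tilde\HH_w(u,u)=e^{-4w}[\HH_w(u,u)-2\Gamma(w,u)^2+\Gamma(w)\Gamma(u)]$. Your derivation of that Hessian identity and your closing remark about the cancellation of the $\Gamma(w,\Gamma(u))$ terms are more explicit than what the paper records, but the logical structure is identical.
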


\begin{proof}
Firstly, we apply the previous corollary  with
$f=e^{-w}$ to obtain a lower bound for the $N$-Ricci tensor $\tilde\RR_N$ associated to the operator $\tilde\LL=e^{-2w}(\LL +(N-2)\Gamma(w,.))$ in terms of the $N$-Ricci tensor $\RR_N$ associated to the operator $\LL$: This yields the ''$\ge$'' in \eqref{conf-est-exp}:
\begin{eqnarray}\label{conf-upp}
\tilde\RR_N(u)
&\ge&
e^{-4w}\,\Big( \RR_N(u)+\big[-{\LL} w -(N-2)\Gamma(w)\big]\, \Gamma(u)\nonumber\\
&&\qquad\qquad -(N-2){\HH}_{w}(u,u)+(N-2)\Gamma(w,u)^2\Big).
\end{eqnarray}
Secondly, we apply the previous corollary with $f=e^w$ to obtain a lower bound for the $N$-Ricci tensor associated to the operator $e^{2w}(\tilde\LL -(N-2)\tilde\Gamma(w,.))$ in terms of the $N$-Ricci tensor $\tilde\RR_N$ associated to the operator $\tilde\LL$.
Note that $e^{+2w}(\tilde\LL -(N-2)\tilde\Gamma(w,.))=\LL$. Thus this indeed provides us with a lower bound for $\RR_N$:
\begin{eqnarray}\label{conf-low}
\RR_N(u)
&\ge&
e^{+4w}\,\Big( \tilde\RR_N(u)+\big[+{\tilde\LL} w -(N-2)\tilde\Gamma(w)\big]\, \tilde\Gamma(u)\nonumber\\
&&\qquad\qquad +(N-2){\tilde\HH}_{w}(u,u)+(N-2)\tilde\Gamma(w,u)^2\Big).
\end{eqnarray}
Combining these two estimates and using the fact that
\begin{eqnarray*}
\tilde\HH_w(u,u)&=&\tilde\Gamma(u,\tilde\Gamma(u,w))-\frac12\tilde\Gamma(w,\tilde\Gamma(u))\\
&=& e^{-4w}\cdot\Big[\HH_w(u,u)-2\Gamma(w,u)^2+\Gamma(w)\cdot\Gamma(u)\Big]
\end{eqnarray*}
finally yields that \eqref{conf-upp} and \eqref{conf-low} are indeed equalities.
\end{proof}

\begin{remark}
If we re-formulate this result in terms of $v=(N-2)w$ then $\tilde {\LL}u=e^{-2v/(N-2)}\big({\LL}u+\Gamma(v,u)\big)$ which converges to ${\LL}u+\Gamma(v,u)$ as $N\to\infty$. That is, conformal transformations in the limit $N\to\infty$ lead to drift transformations.  Note that as $N\to\infty$ the RHS of \eqref{conf-est-exp} tends to $\RR_\infty(u)-{\HH}_v(u,u)$  which is consistent with the well-known result for drift transformations.
\end{remark}

\bigskip

Conformal transformations are of particular interest in a Riemannian setting: if ${\LL}$ is the Laplace-Beltrami operator for the Riemannian manifold $(M,g)$ and if $N$ coincides with the dimension of $M$
then ${\LL}'$ is the Laplace-Beltrami operator for the Riemannian manifold $(M,f^{-2}\,g)$.

More precisely: Given an $n$-dimensional Riemannian manifold $(M,g)$ and a smooth function $w:M\to \R$, we can define a new Riemannian metric $\tilde g$ on $M$ by
$$\tilde g=e^{2w}\cdot g.$$
The induced (`new') Riemannian  volume measure is given by
$d\tilde m=e^{nw} \, dm$, the associated (`new') Dirichlet form is
$$\tilde\E(u)=\int_M |\nabla u|^2 e^{(-2+n)w}dm \quad \mbox{on }L^2(M, e^{nw}m).$$
Here $\nabla$, $|.|$ and $\Delta$ are all defined w.r.t. the metric $g$. Note that
$\tilde\nabla u=e^{-2w}\nabla u$ and thus $|\tilde\nabla u|_{\tilde g}^2=e^{-2w}\, |\nabla u|_g^2$.
Therefore, the associated (`new') Laplace-Beltrami operator is
$$\tilde\Delta u=e^{-2w}\Delta u-\frac{n-2}2\nabla e^{-2w}\cdot\nabla u.$$
The Ricci tensor for the metric $\tilde g$ is given by
\begin{eqnarray*}
\tilde\Ric(X,X)
&=& \Ric(X,X)-\Big(\Delta w+(n-2)|\nabla w|^2\Big)\cdot |X|^2-(n-2)\Big[ {\HH}_w(X,X)-(X\, w)^2\Big].
\end{eqnarray*}
(see e.g. \cite{Bes}, p 59). Applying this to the gradient of a (smooth) function $u$ on $M$ and taking into account that $\tilde\nabla u=e^{-2w}\nabla u$ yields
\begin{eqnarray}
\tilde\Ric(\tilde\nabla u,\tilde\nabla u)
&=& e^{-4w}\cdot \Big[\Ric(\nabla u,\nabla u)-\big(\Delta w+(n-2)|\nabla w|^2\big)\cdot |\nabla u|^2\nonumber\\
&&\qquad\qquad\qquad-(n-2)\big( {\HH}_w(\nabla u,\nabla u)-\langle \nabla w,\nabla u\rangle^2\big)\Big].
\end{eqnarray}
Thus, indeed,  \eqref{conf-est-exp} provides the \emph{exact} formula for the Ricci tensor for $\tilde {\LL}$.

\begin{example} The \emph{Poincar\'e model of hyperbolic space} is of the above type $(M,\tilde g)$ with
 $M=B_R(0)\subset \R^n$ and $\tilde g=f^{-2}\, g_{Euclid}$ for $$f(x)=\frac12\Big(1-\big({|x|}/R\big)^2\Big).$$
Its sectional curvature is constant $-\frac1{R^2}$ and the Ricci curvature is $-\frac{n-1}{R^2}\cdot \tilde g$.
\end{example}

\begin{remark}
For conformal transformations of  Laplacians  with drift on  smooth $N$-dimensional Riemannian manifolds,  estimates of the type
\begin{eqnarray*}
\tilde\Gamma_2(u)-\frac1N (\tilde {\LL} u)^2
\ge
e^{-4w}\,\Big[\K\, \Gamma(u)
-{\LL} w \,\Gamma(u) +c_1\Gamma(w)\, \Gamma(u) -(N-2){\HH}_{w}(u,u)+c_2\Gamma(w,u)^2\Big].
\end{eqnarray*}
have been presented in \cite{ThaWan} and \cite{Wan}, however, with wrong constants.
The first claim was $c_1= -N, c_2=2(N-2)$.
The `corrected' claim then was $c_1=-(N-4), c_2=N$.
Indeed, the correct choices are $c_1=-(N-2)$ and $c_2=N-2$.
\end{remark}

\section{Time Change and Drift Transformation}

This chapter will be devoted to
study the operator
\begin{equation}\label{time+drift}
{\LL}' u=f^2\, {\LL}u+f^2 \, \Gamma(h,u)
\end{equation}
for $f, h\in\A$. That is, in \eqref{Lsharp2} we specify to $r=1$ and $g=f$.
The case $h=-(N-2)\log f$ (`conformal transformation') was already treated in the previous chapter,
the cases $h=0$ (`time change') and $f=1$ (`drift transformation') will be considered in more detail in subsequent paragraphs of this chapter.
Any operator ${\LL}'$ of the form \eqref{time+drift} can be obtained   from ${\LL}$ by a combination
of
\begin{itemize}
\item a  drift transformation with $h$ and
\item  a time change with $f$.
\end{itemize}
Recall that $\Gamma'(u)=f^2\,\Gamma(u)$.  Theorem \ref{g2-est} yields a precise estimate for the $N'$-Ricci tensor associated to
the operator $\LL'$.
\begin{proposition} For every $N'>N$ and $u\in\A$
\begin{eqnarray*}
\RR'_{N'}(u)
&\ge&
f^4\, \RR_N(u)-\frac1{N'-N}\Big(\frac{N-2}2\Gamma(f^2,u)+f^2\Gamma(h,u)\Big)^2\\
&&+
\frac12\big(f^2{\LL}f^2-\Gamma(f^2)\big)\Gamma(u)-\frac{N-2}4\Gamma(f^2,u)^2- f^4 {\HH}_{h}(u,u)\\
&&+\frac12 f^2\Gamma(h,f^2)\Gamma(u)-f^2\Gamma(f^2,u)\Gamma(h,u).
\end{eqnarray*}
\end{proposition}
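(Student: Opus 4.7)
The plan is to recognize this proposition as a direct specialization of Theorem \ref{g2-est} and simply verify that the substitutions carry through cleanly. The operator $\LL'u = f^2\LL u + f^2\Gamma(h,u)$ fits the template \eqref{Lsharp2} exactly when we take $r=1$, $g_1 = f$, and $h_1 = h$, so no new calculation is required beyond matching terms.

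First I would unpack each summand in the conclusion of Theorem \ref{g2-est} with these choices. The drift-type term inside the squared quantity becomes
\[
\sum_{i=1}^r f\,g_i\,\Gamma(h_i,u)\;=\;f\cdot f\cdot\Gamma(h,u)\;=\;f^2\,\Gamma(h,u),
\]
matching the expression inside the $\frac{1}{N'-N}(\cdots)^2$ term. Similarly $\sum_i f^3 g_i\,\HH_{h_i}(u,u) = f^4\,\HH_h(u,u)$, $\sum_i f g_i\,\Gamma(h_i,f^2) = f^2\,\Gamma(h,f^2)$, and $\sum_i f^2\,\Gamma(f g_i,u)\,\Gamma(h_i,u) = f^2\,\Gamma(f^2,u)\,\Gamma(h,u)$. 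The $f^4\,\RR_N(u)$, $\tfrac12(f^2\LL f^2 - \Gamma(f^2))\Gamma(u)$, and $-\tfrac{N-2}{4}\Gamma(f^2,u)^2$ contributions have no dependence on $g_i,h_i$ and thus transfer verbatim.

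Second I would simply collect these substituted terms and observe that the resulting lower bound coincides term-by-term with the one claimed in the proposition. Since all the analytic work — the reduction via Corollary \ref{Hess-est} for the traceless Hessian, the completion of the square in $\LL u$, and the optimization over perturbations $\tilde u$ with $\Gamma(\tilde u - u)(x)=0$ — has already been carried out in the proof of Theorem \ref{g2-est}, nothing further is needed.

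There is effectively no obstacle here; the only thing to guard against is a sign or coefficient slip in matching the $(N-2)/2$ prefactor on $\Gamma(f^2,u)$ inside the squared quantity, and in keeping track of the single power of $f$ vs.\ $f^2$ in the two different roles $g_1=f$ plays. Given that the proposition is essentially announced as a specialization (``in \eqref{Lsharp2} we specify to $r=1$ and $g=f$''), the proof is a one-line invocation: apply Theorem \ref{g2-est} with $r=1$, $g_1=f$, $h_1=h$ and simplify. \qed
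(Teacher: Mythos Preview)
Your proposal is correct and matches the paper's approach exactly: the paper simply states that ``Theorem \ref{g2-est} yields a precise estimate'' and presents the proposition without further proof, treating it as the direct specialization $r=1$, $g_1=f$, $h_1=h$. Your term-by-term verification of the substitutions is accurate and is all that is needed.
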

\begin{remark}
Let us re-formulate this estimate in the case $N'=\infty$ and  $f=e^{-w}$ for some $w\in\A$. It states that the Ricci tensor for the operator ${\LL}'=e^{-2w}\big[{\LL}+\Gamma(h,.)\big]$ satisfies
\begin{eqnarray}\label{dt-k-inf}
\RR'_\infty(u)&\ge& e^{-4w}\Big[\RR_N-
{\LL}w-\Gamma(w,h)\nonumber\\
&&-
\sup_{u\in\A}
\frac1{\Gamma(u)}\Big( (N-2)\Gamma(w,u)^2+ {\HH}_{h}(u,u)-2 \Gamma(w,u)\Gamma(h,u)\Big)\Big].
\end{eqnarray}
\end{remark}

\begin{corollary}\label{cor-g2-est}
Assume that the operator ${\LL}$ satisfies the $\BE(\K,N)$-condition. Then for every $N'>N$
 the operator ${\LL}'$ satisfies the $\BE(\K'_{N'},N')$-condition with
\begin{eqnarray}\label{dt-k-N}
\K'_{N'}&:=& f^2\,\K+
\frac12{\LL}f^2-2\Gamma(f)+f\Gamma(h,f)\nonumber\\
&&-
\sup_{u\in\A}
\frac1{\Gamma(u)}\Big[\frac1{N'-N}\Big((N-2)\Gamma(f,u)+f\Gamma(h,u)\Big)^2\\
&&\qquad\qquad +(N-2)\Gamma(f,u)^2+ f^2 {\HH}_{h}(u,u)+ \Gamma(f^2,u)\Gamma(h,u)\Big].\nonumber
\end{eqnarray}
\end{corollary}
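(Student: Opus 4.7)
The corollary is essentially a direct algebraic consequence of the preceding proposition together with the equivalence between $\BE(\K,N)$ and the pointwise bound $\RR_N\ge \K\cdot\Gamma$. The plan is to take the lower bound for $\RR'_{N'}(u)$ supplied by the proposition, substitute the assumed bound $\RR_N(u)\ge\K\,\Gamma(u)$, and then divide by $\Gamma'(u)=f^2\Gamma(u)$. Taking the infimum over $u$ at each point $x$ isolates the $u$-independent part of the estimate as the sought function $\K'_{N'}$ and collects the remaining $u$-dependent correction as an explicit supremum.

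Concretely, I would first rewrite the $\Gamma(u)$-coefficient of the proposition using the diffusion identities $\tfrac12\LL f^2=\Gamma(f)+f\,\LL f$, $\Gamma(f^2)=4f^2\,\Gamma(f)$, $\Gamma(f^2,u)=2f\,\Gamma(f,u)$ and $\Gamma(h,f^2)=2f\,\Gamma(h,f)$. A short computation shows
\begin{equation*}
\tfrac12\bigl(f^2\LL f^2-\Gamma(f^2)\bigr)+\tfrac12 f^2\,\Gamma(h,f^2)
=f^2\Bigl[\tfrac12\LL f^2-2\Gamma(f)+f\,\Gamma(h,f)\Bigr].
\end{equation*}
The remaining terms on the RHS of the proposition can all be written with a common factor $f^2$ pulled out: using $\Gamma(f^2,u)=2f\,\Gamma(f,u)$, one gets
\begin{equation*}
-\tfrac1{N'-N}\Bigl(\tfrac{N-2}2\Gamma(f^2,u)+f^2\Gamma(h,u)\Bigr)^2
=-\tfrac{f^2}{N'-N}\bigl((N-2)\Gamma(f,u)+f\,\Gamma(h,u)\bigr)^2,
\end{equation*}
and analogously $-\tfrac{N-2}4\Gamma(f^2,u)^2=-f^2(N-2)\Gamma(f,u)^2$, while $-f^4\HH_h(u,u)$ and $-f^2\Gamma(f^2,u)\Gamma(h,u)$ already carry an overall $f^2$.

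Second, using $\BE(\K,N)$ to bound $f^4\RR_N(u)\ge f^4\K\,\Gamma(u)$ and collecting, the proposition becomes
\begin{equation*}
\RR'_{N'}(u)\ge f^2\Gamma(u)\cdot\Bigl[f^2\K+\tfrac12\LL f^2-2\Gamma(f)+f\,\Gamma(h,f)\Bigr]-f^2\, T(f,g,h,u),
\end{equation*}
where $T(f,g,h,u)$ denotes the bracketed sum in the statement of the corollary. At every point $x$ with $\Gamma(u)(x)\ne 0$, dividing by $\Gamma'(u)(x)=f^2(x)\Gamma(u)(x)$ gives
\begin{equation*}
\frac{\RR'_{N'}(u)(x)}{\Gamma'(u)(x)}\ge \bigl[f^2\K+\tfrac12\LL f^2-2\Gamma(f)+f\,\Gamma(h,f)\bigr](x)-\frac{T(f,g,h,u)(x)}{\Gamma(u)(x)}.
\end{equation*}
Since the desired Bakry-Emery condition $\BE(\K'_{N'},N')$ for $\LL'$ is equivalent to $\RR'_{N'}(u)\ge \K'_{N'}\cdot\Gamma'(u)$ for \emph{every} $u\in\A$, taking the infimum over $u\in\A$ on the right (equivalently, replacing $-T/\Gamma(u)$ by $-\sup_u T/\Gamma(u)$) yields exactly the expression \eqref{dt-k-N} for $\K'_{N'}$. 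Points $x$ where $\Gamma(u)(x)=0$ cause no trouble: there both sides of the target inequality are handled by standard extension conventions, and Corollary~\ref{abs-cont-hess} guarantees that the quotient on the right depends only on the equivalence class of $u$ under $\Gamma(\,\cdot\,)(x)=0$.

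The main (and essentially only) obstacle is the algebraic bookkeeping in passing from the form of the proposition to the form displayed in the corollary. No new analytic ingredient beyond Theorem~\ref{g2-est} and the definitional equivalence $\BE(\K,N)\Longleftrightarrow\RR_N\ge\K\,\Gamma$ is needed.
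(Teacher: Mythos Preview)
Your proposal is correct and is exactly the paper's (implicit) argument: the corollary is obtained from the preceding proposition by inserting the bound $\RR_N(u)\ge\K\,\Gamma(u)$, factoring $f^2$ out of every term via the chain-rule identities you list, dividing by $\Gamma'(u)=f^2\Gamma(u)$, and taking the infimum over $u$. The paper does not spell out a separate proof, treating it as an immediate specialization of Corollary~\ref{g2-est-cor} (itself an immediate consequence of Theorem~\ref{g2-est}); your algebraic check of the coefficients is the entire content.
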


\subsection{Drift Transformation}

Let us  have a closer look on the  case $f=1$. This is the  {`drift transformation'} which leads to a particularly simple, well-known formula for the Ricci tensor associated to the operator $\LL'=\LL+\Gamma(h,.)$. Obviously, $\Gamma'=\Gamma$.

\begin{proposition}
\begin{eqnarray}
\RR'(u)
&=&
\RR(u)-  {\HH}_{h}(u,u)
\end{eqnarray}
and for every $N'>N$
\begin{eqnarray*}
\RR'_{N'}(u)
&\ge&
\RR_N(u)-  {\HH}_{h}(u,u)-\frac1{N'-N}\Gamma(h,u)^2.
\end{eqnarray*}
\end{proposition}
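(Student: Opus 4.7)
The plan is to compute $\Gamma'$ and $\Gamma'_2$ exactly; from this, both claims drop out with a single additional ingredient (an elementary inequality for the $N'$-case). No appeal to the full Theorem~\ref{g2-est} is actually needed, though that theorem (specialised to $f\equiv 1$, $r=1$, $g_1\equiv 1$, $h_1=h$) would deliver the $N'$-estimate immediately.

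By the diffusion property applied to $\psi(x,y)=xy$, the map $\Gamma(h,\cdot)$ acts as a derivation, so $\Gamma'=\Gamma$. Expanding the definition of $\Gamma'_2$ with $\LL'=\LL+\Gamma(h,\cdot)$,
\[
\Gamma'_2(u)=\Gamma_2(u)+\tfrac12\Gamma(h,\Gamma(u))-\Gamma(u,\Gamma(h,u))=\Gamma_2(u)-\HH_h(u,u),
\]
where the last step is simply the definition of $\HH_h(u,u)$. This is the main identity.

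For the first claim (case $N=\infty$): take the infimum of $\Gamma'_2(\tilde u)=\Gamma_2(\tilde u)-\HH_h(\tilde u,\tilde u)$ over $\tilde u\in\A$ with $\Gamma'(\tilde u-u)(x)=\Gamma(\tilde u-u)(x)=0$. By Corollary~\ref{abs-cont-hess} and bilinearity, $\HH_h(\tilde u,\tilde u)(x)=\HH_h(u,u)(x)$ for every such $\tilde u$, so the Hessian correction is constant on the admissible set and passes outside the infimum. This yields $\RR'(u)=\RR(u)-\HH_h(u,u)$ — an equality, not merely an inequality, precisely because the correction term is common to all competitors.

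For the $N'$-estimate I would use the elementary inequality $\tfrac1{N'}(a+b)^2\le \tfrac1N a^2+\tfrac1{N'-N}b^2$ (valid for $N'>N>0$, equivalent to $((N'-N)a-Nb)^2\ge 0$), applied with $a=\LL\tilde u$ and $b=\Gamma(h,\tilde u)$. Combined with the main identity,
\[
\Gamma'_2(\tilde u)-\tfrac1{N'}(\LL'\tilde u)^2\ge \Gamma_2(\tilde u)-\tfrac1N(\LL\tilde u)^2-\HH_h(\tilde u,\tilde u)-\tfrac1{N'-N}\Gamma(h,\tilde u)^2.
\]
Taking the infimum over admissible $\tilde u$ and again moving the $\HH_h$- and $\Gamma(h,\cdot)$-terms outside gives the claimed bound. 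The only real issue to check is that each $\tilde u$-dependent correction depends only on the $\Gamma(\cdot)(x)$-equivalence class of $u$: for $\HH_h$ this is Corollary~\ref{abs-cont-hess}, and for $\Gamma(h,\tilde u)(x)$ it is Cauchy–Schwarz, $\Gamma(h,\tilde u-u)(x)^2\le\Gamma(h)(x)\cdot\Gamma(\tilde u-u)(x)=0$. Without this well-definedness the argument would only produce an optimizer-dependent bound, so this is really the crux of the proof.
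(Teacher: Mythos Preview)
Your proof is correct. It differs from the paper's argument in an instructive way. The paper obtains the $\ge$-inequality (in both displays) by specialising the general Theorem~\ref{g2-est} to $f\equiv 1$, $r=1$, $g_1\equiv 1$, $h_1=h$, and then derives the reverse inequality $\RR'(u)\le \RR(u)-\HH_h(u,u)$ by a symmetry trick: applying the same theorem with the roles of $\LL$ and $\LL'$ interchanged (drift $-h$), using $\Gamma'=\Gamma$ so that $\HH'=\HH$. You instead compute the identity $\Gamma'_2(u)=\Gamma_2(u)-\HH_h(u,u)$ directly and observe that the correction terms $\HH_h(\tilde u,\tilde u)(x)$ and $\Gamma(h,\tilde u)(x)$ are constant on the admissible set $\{\tilde u:\Gamma(\tilde u-u)(x)=0\}$, so they pass outside the infimum and the equality falls out in one step, with no need to go through Theorem~\ref{g2-est} at all.

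What this buys: your argument is more elementary and self-contained for this special case, and it handles the $N=N'=\infty$ equality cleanly without the limiting issues one might worry about in the general theorem. The paper's route, by contrast, illustrates how the drift transformation sits inside the general framework of Theorem~\ref{g2-est}, and the swap trick is the same device used elsewhere (e.g.\ in the conformal case, Theorem~\ref{conf-ricc}) to upgrade one-sided estimates to equalities. Both arguments ultimately rest on Corollary~\ref{abs-cont-hess}, so they are not independent at the foundational level, but the presentation is genuinely different.
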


\begin{proof} The ''$\ge$''-inequality follows immediately from Theorem \ref{g2-est}. The ''$\le$''-inequality in the case $N=\infty$ follows from another application of this result to the transformation of the operator $\LL'$ by means of the drift $\Gamma(-h,.)$ or, in other words, by exchanging the roles of $\LL$ and $\LL'$.
\end{proof}

\begin{corollary}[\cite{Bak-Eme}]
Assume that the operator ${\LL}$ satisfies the $\BE(\K,N)$-condition. Then for every $N'>N$
 the operator ${\LL}'={\LL}+\Gamma(h,.)$ satisfies the $\BE(\K',N')$-condition with
\begin{eqnarray*}
\K'&:=& \K-
\sup_{u\in\A}
\frac1{\Gamma(u)}\Big[ {\HH}_{h}(u,u)+\frac1{N'-N}\Gamma(h,u)^2\Big].
\end{eqnarray*}
In particular, if the operator
${\LL}$ satisfies the $\BE(\K,\infty)$-condition then
 the operator ${\LL}'={\LL}+\Gamma(h,.)$ satisfies the $\BE(\K',\infty)$-condition with
$\K'= \K-
\sup_{u}
\frac1{\Gamma(u)} {\HH}_{h}(u,u)$,
\end{corollary}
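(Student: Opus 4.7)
The plan is to derive this corollary as an essentially immediate consequence of the preceding proposition combined with the fact -- noted right after the definition of the Bakry-Emery condition -- that $\BE(\K,N)$ is equivalent to the pointwise inequality $\RR_N(u) \ge \K\cdot \Gamma(u)$.

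First I would invoke the proposition to obtain, for every $N' > N$ and every $u \in \A$, the pointwise estimate
\begin{equation*}
\RR'_{N'}(u) \;\ge\; \RR_N(u) - \HH_h(u,u) - \frac{1}{N'-N}\,\Gamma(h,u)^2.
\end{equation*}
Substituting the assumption $\RR_N(u)(x) \ge \K(x)\,\Gamma(u)(x)$ then yields
\begin{equation*}
\RR'_{N'}(u)(x) \;\ge\; \K(x)\,\Gamma(u)(x) - \HH_h(u,u)(x) - \frac{1}{N'-N}\,\Gamma(h,u)^2(x).
\end{equation*}

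Next, at each $x \in X$ and for each $u \in \A$ with $\Gamma(u)(x) > 0$, I would divide both sides by $\Gamma(u)(x)$ and take the infimum over such $u$; by the very definition of $\K'$ this produces the pointwise bound $\RR'_{N'}(u)(x) \ge \K'(x)\,\Gamma(u)(x)$. Since $\Gamma'(u) = \Gamma(u)$ in the drift case (as $f=1$), this is precisely $\BE(\K',N')$ for $\LL'$. The case $N = N' = \infty$ follows by the same argument, starting instead from the equality assertion $\RR'(u) = \RR(u) - \HH_h(u,u)$ of the proposition and simply dropping the $\frac{1}{N'-N}$-term throughout.

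The only point requiring a moment of care -- and hardly a genuine obstacle -- is the treatment of points $x$ where $\Gamma(u)(x) = 0$. There, Corollary \ref{abs-cont-hess} forces $\HH_h(u,u)(x) = 0$, while the Cauchy--Schwarz inequality $\Gamma(h,u)^2 \le \Gamma(h)\,\Gamma(u)$ for the positive semidefinite form $\Gamma$ forces $\Gamma(h,u)(x) = 0$ as well. Consequently the bracket in the numerator of the ratio defining $\K'$ vanishes, so such $u$ may be excluded from the supremum without loss, and the desired inequality $\RR'_{N'}(u)(x)\ge \K'(x)\,\Gamma(u)(x)$ holds trivially at those $x$.
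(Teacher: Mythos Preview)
Your proposal is correct and matches the paper's own approach: the corollary is stated without a separate proof precisely because it is the immediate specialization of the preceding Proposition (combined with the equivalence $\BE(\K,N)\Leftrightarrow \RR_N\ge\K\cdot\Gamma$) to the case $f=1$, exactly as you derive it. Your handling of the degenerate case $\Gamma(u)(x)=0$ via Corollary~\ref{abs-cont-hess} and Cauchy--Schwarz is also appropriate.
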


Actually, the framework of Theorem \ref{g2-est} allows to treat more general drift terms. Given $g_i,h_i\in\A$ for $i=1,\ldots,r$,
define  $Z:\A\to\A$ (regarded as `vector field') by
\begin{equation*}
Zu=\sum_{i=1}^r g_i\Gamma(h_i,u).
\end{equation*}
For instance, in the Riemannian case one might choose $r$ to be the dimension, $h_i=x^i$ the coordinate functions and $g_i=Z^i$ as the components of a given vector field $Z=\sum_i Z^i \frac{\partial}{\partial x^i}$.
According to Theorem \ref{g2-est},
\begin{eqnarray}
\RR'(u)
&=&
\RR(u)-  \big(DZ\big)(u,u)
\end{eqnarray}
where $\big(DZ\big)(u,u):=\sum_{i=1}^r \Big[g_i\, {\HH}_{h_i}(u,u)+\Gamma(g_i,u)\,\Gamma(h_i,u)\Big]$
and for every $N'>N$
\begin{eqnarray*}
\RR'_{N'}(u)
&\ge&
\RR_N(u)- \big(DZ\big)(u,u)-\frac1{N'-N}(Z\,u)^2.
\end{eqnarray*}
\begin{corollary}[\cite{Bak94}]\label{drift-be}
Assume that the operator ${\LL}$ satisfies the $\BE(\K,N)$-condition. Then for every $N'>N$
 the operator ${\LL}'={\LL}+Z$ satisfies the $\BE(\K',N')$-condition with
\begin{eqnarray*}
\K'&:=& \K-
\sup_{u\in\A}
\frac1{\Gamma(u)}\Big[ \big(DZ\big)(u,u)+\frac1{N'-N}(Zu)^2\Big].
\end{eqnarray*}
\end{corollary}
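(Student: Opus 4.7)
The plan is to derive the corollary as an immediate consequence of the Ricci bound stated in the paragraph just above it (which itself specializes Theorem \ref{g2-est} to the case $f\equiv 1$ with the prescribed $g_i,h_i$). Concretely, setting $f\equiv 1$ in the operator $\LL' u = f^2\LL u + f\sum_i g_i\Gamma(h_i,u)$ of \eqref{Lsharp2} yields $\LL' = \LL + Z$ and $\Gamma' = \Gamma$; every term in Theorem \ref{g2-est} involving $f^2$ collapses ($\Gamma(f^2,u)=0$, $\LL f^2=0$, $\Gamma(f^2)=0$, $\Gamma(h_i,f^2)=0$), and the remaining three terms, namely $-\frac{1}{N'-N}\bigl(\sum_i g_i\Gamma(h_i,u)\bigr)^2$, $-\sum_i g_i\HH_{h_i}(u,u)$, and $-\sum_i \Gamma(g_i,u)\Gamma(h_i,u)$, regroup exactly into $-\frac{1}{N'-N}(Zu)^2 - (DZ)(u,u)$.

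I would then combine this pointwise Ricci inequality with the two translations between $\BE$ and $\RR_N$ stated in Section 2. The Bakry–Emery hypothesis $\BE(\K,N)$ for $\LL$ is equivalent to the pointwise bound $\RR_N(u)(x)\ge \K(x)\cdot \Gamma(u)(x)$. Substituting this into the lower estimate for $\RR'_{N'}$ gives
\begin{eqnarray*}
\RR'_{N'}(u)(x) \ge \K(x)\,\Gamma(u)(x) - (DZ)(u,u)(x) - \frac{1}{N'-N}(Zu)^2(x)
\end{eqnarray*}
at every $x\in X$. Since $\Gamma'(u)=\Gamma(u)$, dividing by $\Gamma(u)(x)$ on the subset where it is strictly positive and then taking the infimum over $u\in\A$ (and checking that on the set $\{\Gamma(u)(x)=0\}$ both $(DZ)(u,u)(x)$ and $(Zu)(x)$ vanish by Corollary \ref{abs-cont-hess} and the fact that $Zu$ is a sum of $\Gamma(\cdot,u)$ terms) produces the stated $\K'$ and yields $\RR'_{N'}\ge \K'\cdot\Gamma'$, i.e.\ $\BE(\K',N')$ for $\LL'$.

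There is essentially no hard step here; the corollary is a direct packaging of the earlier Proposition on drift transformation extended to vector-field drifts of the form $Zu=\sum_i g_i\Gamma(h_i,u)$. The only part that requires mild care is the bookkeeping verifying that the three leftover terms of Theorem \ref{g2-est} really assemble into $(DZ)(u,u)+\frac{1}{N'-N}(Zu)^2$, and the observation that the pointwise definition of $\RR_N$ allows us to pass to the infimum over $u$ without losing sharpness, since $\Gamma'=\Gamma$ means the equivalence classes defining the infimum coincide for $\LL$ and $\LL'$. No further regularity hypothesis beyond those built into the abstract framework is needed.
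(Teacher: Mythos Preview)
Your proposal is correct and follows exactly the route the paper intends: the corollary is stated without proof because it is the specialization of Theorem \ref{g2-est} (equivalently Corollary \ref{g2-est-cor}) to $f\equiv 1$, combined with the equivalence $\BE(\K,N)\Leftrightarrow \RR_N\ge\K\cdot\Gamma$ from Section~2. Your bookkeeping of the surviving terms and your handling of the degenerate set $\{\Gamma(u)=0\}$ via Corollary \ref{abs-cont-hess} are both accurate.
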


\begin{remark}
If ${\LL}$ is the generator of the diffusion process $(X_t,\PP_x)$ then under appropriate regularity assumptions (see e.g. \cite{RevYor}, \cite{FOT}) the transformed operator ${\LL}'={\LL}+Z$ will be the generator of the diffusion process $(X_t,\PP'_x)$ where $\PP'_x=M_t\cdot \PP_x$ on  $\sigma\{X_s: s\le t\}$ for a suitable martingale $(M_t)_{t\ge0}$, e.g. in the Riemannian case
$$M_t=\exp\left(\int_0^t Z(X_s)dX_x-\frac12 \int_0^t |Z(X_s)|^2ds\right).$$

A particular case is the  Doob transformation where $Z=2\nabla\log\phi$ for some $\phi>0$ satisfying ${\LL}\phi=0$. In this case, the martingale can alternatively be given as
$M_t=\phi(X_t)/\phi(X_0)$. The transition semigroup is given by $P'_tu=\frac1\phi\, P_t(\phi\, u)$.
\end{remark}

\subsection{Time Change}

Next we will focus on the particular case $h=0$. That is, we will consider the operator
$${\LL}'=f^2\, {\LL}$$
('time change') for some $f\in\A$. Obviously, $\Gamma'(u)=f^2\,\Gamma(u)$.
Theorem \ref{g2-est} immediately yields the following sharp estimate for the Ricci tensor for $\LL'$.

\begin{corollary}\label{g2-est-time}
\begin{eqnarray*}
\RR'_{N'}(u)
&\ge&
f^4\, \RR_N(u)-\frac{(N-2)(N'-2)}{N'-N}f^2\,\Gamma(f,u)^2+
\frac12\big(f^2{\LL}f^2-\Gamma(f^2)\big)\Gamma(u).
\end{eqnarray*}
\end{corollary}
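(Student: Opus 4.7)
The plan is to simply specialize Theorem \ref{g2-est} to the time-change setting, where no drift is present. Concretely, the operator $\LL' u = f^2\,\LL u$ fits into the general framework \eqref{Lsharp2} with $r=1$, $g_1 = f$, and $h_1 \equiv 0$ (equivalently, $h=0$ in \eqref{time+drift}). Under this choice, every term in Theorem \ref{g2-est} that involves $\Gamma(h_i, u)$, $\HH_{h_i}(u,u)$, or $\Gamma(h_i, f^2)$ drops out identically.

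First, I would write out the estimate from Theorem \ref{g2-est} after this substitution. What survives is
\begin{eqnarray*}
\RR'_{N'}(u)
&\ge&
f^4\, \RR_N(u)-\frac1{N'-N}\Big(\frac{N-2}2\Gamma(f^2,u)\Big)^2\\
&&+\frac12\big(f^2{\LL}f^2-\Gamma(f^2)\big)\Gamma(u)-\frac{N-2}4\Gamma(f^2,u)^2.
\end{eqnarray*}
The $\frac12\big(f^2\LL f^2-\Gamma(f^2)\big)\Gamma(u)$ term already matches the claimed expression, so no further work is needed on that piece. It remains to combine the two terms quadratic in $\Gamma(f^2,u)$ into the single term $-\frac{(N-2)(N'-2)}{N'-N}\,f^2\,\Gamma(f,u)^2$.

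Next, using the diffusion chain rule $\Gamma(f^2,u) = 2f\,\Gamma(f,u)$, so that $\Gamma(f^2,u)^2 = 4f^2\,\Gamma(f,u)^2$. Substituting, the two relevant terms become
\begin{eqnarray*}
-\frac{(N-2)^2}{4(N'-N)}\cdot 4f^2\Gamma(f,u)^2 - \frac{N-2}{4}\cdot 4f^2\Gamma(f,u)^2 = -\Big(\frac{(N-2)^2}{N'-N} + (N-2)\Big)f^2\Gamma(f,u)^2.
\end{eqnarray*}
The parenthesis simplifies to
\begin{eqnarray*}
\frac{(N-2)^2 + (N-2)(N'-N)}{N'-N} = \frac{(N-2)\bigl[(N-2) + (N'-N)\bigr]}{N'-N} = \frac{(N-2)(N'-2)}{N'-N},
\end{eqnarray*}
which yields exactly the stated coefficient.

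There is no real obstacle here: the proof is a direct specialization of Theorem \ref{g2-est} followed by two elementary identities (the chain rule for $\Gamma$ and a factorization of $(N-2)^2+(N-2)(N'-N)$). I would present it essentially as a one-paragraph verification, noting that the result is sharp in the same sense that Theorem \ref{g2-est} is sharp, and observing as a sanity check that the bound reduces, in the conformal case $h = -(N-2)\log f$ (treated separately in the previous chapter), to the equality of Theorem \ref{conf-ricc}.
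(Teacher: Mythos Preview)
Your proof is correct and follows exactly the paper's approach: the paper simply states that ``Theorem \ref{g2-est} immediately yields'' the corollary, and your specialization of that theorem to $h=0$ together with the chain rule $\Gamma(f^2,u)=2f\,\Gamma(f,u)$ and the elementary factorization is precisely what is needed. (One small quibble: your closing sanity-check remark about reducing to the conformal case $h=-(N-2)\log f$ is misplaced here, since this corollary is specifically the $h=0$ case; but that aside does not affect the argument.)
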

\begin{remark}
If $f=e^{-w}$ for some $w\in\A$ these results can be reformulated   as %follows
\begin{eqnarray}
\RR'_{N'}(u)&\ge& e^{-4w}\Big[\RR_N(u) - {\LL}w\,\Gamma(u)- \frac{(N-2)(N'-2)}{N'-N}\,\Gamma(w,u)^2\Big].
\end{eqnarray}
\end{remark}

\begin{corollary}\label{be-tc}
Assume that the operator ${\LL}$ satisfies the $\BE(\K,N)$-condition. Then for every $N'>N$
the operator ${\LL}'$ satisfies the $\BE(\K',N')$-condition with
\begin{eqnarray}\label{k-time}
\K'&:=& f^2\,\K
+
\frac12{\LL}f^2-N^*\,\Gamma(f).
\end{eqnarray}
where $N^*=2+
\frac{[(N-2)(N'-2)]_+}{N'-N}\ge\max\{N,2\}$.

In particular,
the operator ${\LL}'$ satisfies the $\BE(\K'_\infty,\infty)$-condition with
\begin{eqnarray*}
\K'_\infty= f^2\,\K
+
\frac12{\LL}f^2-\max\{N,2\}\,\Gamma(f).
\end{eqnarray*}
\end{corollary}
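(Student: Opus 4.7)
The plan is to feed the already-established pointwise bound from Corollary \ref{g2-est-time} into the definition of $\BE$ and to read off $\K'$ by three routine manipulations; no extra machinery is needed. Recall that $\Gamma'(u)=f^2\,\Gamma(u)$, so the condition $\BE(\K',N')$ for $\LL'$ amounts to $\RR'_{N'}(u)\ge \K'\cdot \Gamma'(u)$, i.e. to a pointwise bound by $\K'\cdot f^2\,\Gamma(u)$. Corollary \ref{g2-est-time} supplies exactly such a bound, modulo handling the two non-trivial terms $f^4\,\RR_N(u)$ and $-\frac{(N-2)(N'-2)}{N'-N}f^2\,\Gamma(f,u)^2$.

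First, the assumption $\BE(\K,N)$ is equivalent to $\RR_N(u)\ge \K\cdot\Gamma(u)$, so
$$f^4\,\RR_N(u)\ \ge\ f^2\,\K\cdot \Gamma'(u).$$
Second, the diffusion chain rule gives $\Gamma(f^2)=4f^2\,\Gamma(f)$, so
$$\tfrac12\bigl(f^2\,\LL f^2-\Gamma(f^2)\bigr)\,\Gamma(u)\ =\ \bigl(\tfrac12\LL f^2-2\,\Gamma(f)\bigr)\cdot \Gamma'(u).$$
Third, the Cauchy-Schwarz inequality $\Gamma(f,u)^2\le \Gamma(f)\,\Gamma(u)$ is applied to the cross term, with a case split on the sign of the coefficient. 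If $(N-2)(N'-2)\ge 0$ (so the prefactor on $-\Gamma(f,u)^2$ is non-negative), Cauchy-Schwarz yields
$$-\tfrac{(N-2)(N'-2)}{N'-N}f^2\,\Gamma(f,u)^2\ \ge\ -\tfrac{(N-2)(N'-2)}{N'-N}\,\Gamma(f)\cdot \Gamma'(u);$$
if instead $(N-2)(N'-2)<0$ (the remaining case $N<2<N'$), the left-hand side is itself non-negative and is simply dropped. Bundling both possibilities through the positive-part notation,
$$-\tfrac{(N-2)(N'-2)}{N'-N}f^2\,\Gamma(f,u)^2\ \ge\ -\tfrac{[(N-2)(N'-2)]_+}{N'-N}\,\Gamma(f)\cdot \Gamma'(u).$$

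Summing these three pointwise inequalities produces
$$\RR'_{N'}(u)\ \ge\ \Bigl(f^2\,\K+\tfrac12\LL f^2-2\,\Gamma(f)-\tfrac{[(N-2)(N'-2)]_+}{N'-N}\,\Gamma(f)\Bigr)\cdot \Gamma'(u),$$
and recognising the bracket as $\K'=f^2\,\K+\tfrac12\LL f^2-N^*\,\Gamma(f)$ with $N^*=2+\tfrac{[(N-2)(N'-2)]_+}{N'-N}$ gives $\BE(\K',N')$. The bound $N^*\ge\max\{N,2\}$ claimed in the statement is verified directly: $N^*\ge 2$ is immediate from the positive part, and for $N\ge 2$ a one-line algebraic check gives $N^*-N=(N-2)^2/(N'-N)\ge 0$.

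For the $\infty$-version, I would exploit that $\BE(\K',N')\Rightarrow \BE(\K',\infty)$ for any $N'<\infty$, together with the fact that $N^*$ is a non-increasing function of $N'\in(N,\infty)$ converging, as $N'\to\infty$, to $\max\{N,2\}$ (one checks this separately for $N\ge 2$ and $N<2$). Hence $\K'$ is non-decreasing in $N'$ and its supremum is exactly $\K'_\infty=f^2\,\K+\tfrac12\LL f^2-\max\{N,2\}\,\Gamma(f)$, yielding $\BE(\K'_\infty,\infty)$. There is no real obstacle in this argument; the only subtlety is the sign bookkeeping in the Cauchy-Schwarz step and the verification that the $N'\to\infty$ limit of $N^*$ coincides with $\max\{N,2\}$ in both regimes.
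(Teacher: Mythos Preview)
Your argument is correct and is exactly the filling-in the paper expects: the paper gives no separate proof of this corollary, treating it as immediate from Corollary~\ref{g2-est-time}, and your three steps (the bound $\RR_N\ge\K\,\Gamma$, the chain rule $\Gamma(f^2)=4f^2\Gamma(f)$, and Cauchy--Schwarz on the cross term with the positive-part bookkeeping) are precisely what that ``immediate'' entails. Your handling of the $N'\to\infty$ limit and the monotonicity of $N^*$ in $N'$ is likewise straightforward and correct.
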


\begin{remark} Assume that ${\LL}$ is the generator of the diffusion process $(X_t,\PP_x)$
in the sense that ${\LL}u(x)=\lim_{t\to0}\frac1t \EE_x\big( u(X_t)-u(X_0)\big)$ for all $u\in\A$ and all $x\in X$ or -- if there exists an invariant measure $m$ -- in the sense of $L^p$-convergence. Then under appropriate regularity assumptions (see e.g. \cite{RevYor}, \cite{FOT}) the transformed operator ${\LL}'=f^2\,{\LL} $ will be the generator of the diffusion process $(X'_t,\PP_x)$
where  $X'_t=X_{\tau(t)}$ with the 'time change' $t\mapsto\tau(t)$ being the inverse to $t\mapsto \int_0^t f^{-2}(X_s)ds$.
\end{remark}

\section{Dirichlet Forms}\label{df}

Let us from now on assume that $X$ is a measurable space, that all $u\in\A$ are bounded and measurable, and that we are given a $\sigma$-finite measure $m$ on $X$ with full support such that $\A\subset L^2(X,m)$.
(For the latter property, it might be of advantage not to require that the constants belong to $\A$.)
We say that
\begin{itemize}
\item $m$ is  ${\LL}$-\emph{invariant} if $\int {\LL}u\,dm=0$ for all $u\in\A$,
\item $m$ is  ${\LL}$-\emph{reversible} if $\int v{\LL}u\,dm=\int u{\LL}v\,dm$ for all $u,v\in\A$.
\end{itemize}
Throughout this chapter, let $h,w\in\A$ be given and  put ${\LL}'=e^{-2w}({\LL}+\Gamma(h,.))$ and $m'=e^{h+2w}m$. Then $m'$ is ${\LL}'$-invariant (or ${\LL}'$-reversible) if and only if
$m$ is ${\LL}$-invariant (or ${\LL}$-reversible, resp.).

Given a  measure $m$ on $X$ which is invariant and reversible w.r.t. ${\LL}$, we define the Dirichlet form $(\E,\Dom(\E))$ on $L^2(X,m)$
as the closure of
\begin{equation*}
\E(u)=\int\Gamma(u)\,dm\quad\mbox{for }u\in\A\subset L^2(X,m).
\end{equation*}
Similarly, we define
the Dirichlet form $(\E',\Dom(\E'))$ on $L^2\big(X,m\big)$
as the closure of
\begin{equation*}
\E'(u)=\int\Gamma(u)e^h\,dm\quad\mbox{for }u\in\A\subset L^2\big(X,e^{h+2w}m\big).
\end{equation*}
Then the generator $({\LL},\Dom(\LL))$ of $(\E,\Dom(\E))$ is the Friedrichs extension of $({\LL},\A)$ in $L^2(X,m)$ and
$({\LL'}, \Dom(\LL'))$, the generator  of $(\E',\Dom(\E'))$, is the Friedrichs extension of $({\LL}',\A)$ in $L^2(X,m')$.
\begin{definition}
We say that the triple $({\LL},\A,m)$ satisfies the $\BE(\K,N)$-condition if $\A$ is dense in $\Dom((-L)^{3/2})$  and if the operator $({\LL},\A)$ satisfies the $\BE(\K,N)$-condition.
\end{definition}
Density here is understood w.r.t.
the graph norm
$u\mapsto \Big[\| (-L)^{3/2}u  \|_{L^2}^2+\|u\|_{L^2}^2\Big]^{1/2}\,=\, \Big[\E(\LL u)+\|u\|_{L^2}^2\Big]^{1/2}$.

\begin{lemma}\label{D3dense}
If $\A$ is dense in $\Dom((-\LL)^{3/2})$ then it is also  dense in $\Dom((-\LL')^{3/2})$.
\end{lemma}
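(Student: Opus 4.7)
The strategy is to show $\Dom((-\LL)^{3/2}) = \Dom((-\LL')^{3/2})$ as subsets of $L^2(m) = L^2(m')$ with equivalent graph norms, so that density of $\A$ transfers automatically from one side to the other. I would begin with the observation that since $h,w\in\A$ are bounded with $\Gamma(h),\Gamma(w)\in L^\infty(m)$, the Radon--Nikodym derivative $dm'/dm=e^{h+2w}$ and the conformal factor $e^h$ are both bounded above and below, so $L^2(m)\cong L^2(m')$ and $\Dom(\E)=\Dom(\E')$ with equivalent norms; moreover, multiplication by $e^{\alpha h+\beta w}$ preserves $\Dom(\E)$ because the corresponding $\Gamma$ is bounded. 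In particular, density of $\A$ in $\Dom((-\LL)^{3/2})$ already forces density of $\A$ in $\Dom(\E)=\Dom(\E')$.

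Next I would identify $\Dom(\LL)=\Dom(\LL')$ via the explicit formula $\LL' u = e^{-2w}(\LL u+\Gamma(h,u))$. For $u\in\Dom(\LL)$ and $v\in\A$, the Leibniz rule for $\Gamma$ and the reversibility of $\LL$ w.r.t.\ $m$ give
\begin{equation*}
\E'(u,v) = \int\Gamma(u,ve^h)\,dm - \int v\,e^h\,\Gamma(u,h)\,dm = -\int v\,(\LL u+\Gamma(h,u))\,e^h\,dm = -\int v\,\LL' u\,dm',
\end{equation*}
and Cauchy--Schwarz $|\Gamma(h,u)|^2\le\Gamma(h)\,\Gamma(u)$ with $\Gamma(h)\in L^\infty$ shows $\LL' u\in L^2(m')$; density of $\A$ in $\Dom(\E')$ then forces $u\in\Dom(\LL')$ with the stated action. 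The reverse inclusion follows symmetrically via $\LL u = e^{2w}\LL' u - \Gamma(h,u)$.

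For the $3/2$-domains, $u\in\Dom((-\LL)^{3/2})$ is the condition $\LL u\in\Dom(\E)$, and analogously for $\LL'$. Since multiplication by $e^{\pm 2w}$ preserves $\Dom(\E)$, what remains is to verify $\Gamma(h,u)\in\Dom(\E)$ and to establish a two-sided comparison of the graph norms on $\A$. The set containment is immediate on $\A$ because this algebra is closed under $\Gamma$. For the norm comparison, I would expand $\Gamma(\LL' u)$ via the Leibniz rule and estimate the crucial cross term pointwise by a Hessian bound of the form $\Gamma(\Gamma(h,u))\le C\bigl(\|\HH_h\|_{HS}^2\,\Gamma(u) + \|\HH_u\|_{HS}^2\,\Gamma(h)\bigr)$; the factors $\|\HH_h\|_{HS}$ and $\Gamma(h)$ are globally bounded, while $\int\|\HH_u\|_{HS}^2\,dm$ is controlled in terms of $\E(\LL u)$ by combining Corollary~\ref{Hess-est} with the Green formula $\int\Gamma_2(u)\,dm=\int(\LL u)^2\,dm$ (an immediate consequence of invariance and reversibility of $m$).

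The technical obstacle is precisely this pointwise control of $\Gamma(\Gamma(h,u))$, which is genuinely second-order in $u$ and so relies on the Hessian machinery from Theorems~\ref{sharpGamma2} and~\ref{super-self}. Once the two graph norms are equivalent on $\A$, their abstract completions coincide, and by the domain identification above both embed as $\Dom((-\LL)^{3/2})=\Dom((-\LL')^{3/2})$ inside $L^2$; hence the $(-\LL)^{3/2}$-graph closure of $\A$, which equals $\Dom((-\LL)^{3/2})$ by hypothesis, also equals the $(-\LL')^{3/2}$-graph closure, yielding density of $\A$ in $\Dom((-\LL')^{3/2})$.
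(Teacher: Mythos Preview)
Your outline matches the paper through $\Dom(\E)=\Dom(\E')$ and $\Dom(\LL)=\Dom(\LL')$, but the passage to the $3/2$-domains has two genuine gaps. First, you assert that $\|\HH_h\|_{HS}$ is globally bounded; nothing in the hypotheses guarantees this. From $h\in\A$ one gets $\LL h,\Gamma(h)\in\A\subset L^\infty$, but a uniform bound on $\|\HH_h\|_{HS}$ would amount (in the Riemannian picture) to $\|D^2h\|\in L^\infty$, which simply does not follow. Second, you want $\int\|\HH_u\|_{HS}^2\,dm$ controlled by the graph norm of $u$ via $\int\Gamma_2(u)\,dm=\int(\LL u)^2\,dm$; but Theorem~\ref{super-self} only yields $\|\HH_u\|_{HS}^2\le\Gamma_2(u)-\RR(u)$, so an integral bound requires a lower Ricci bound $\RR\ge -C\,\Gamma$ of Bakry--Emery type, which is \emph{not} assumed in the lemma. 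Without it you also cannot prove $\Gamma(h,u)\in\Dom(\E)$ for arbitrary $u\in\Dom(\LL)$, and then the inclusion $\Dom((-\LL')^{3/2})\subset\Dom((-\LL)^{3/2})$ becomes circular: to approximate such $u$ by elements of $\A$ you would need density of $\A$ in the primed $3/2$-domain, which is exactly the conclusion.

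The paper avoids all of this with a conjugation trick combined with the \emph{weak} characterization of the $3/2$-domain. Setting $\tilde u=e^{h/2}u$, $\tilde\phi=e^{h/2}\phi$, the identity $\LL u+\Gamma(h,u)=e^{-h/2}\bigl(\LL\tilde u-u\,\LL e^{h/2}\bigr)$ trades the troublesome first-order term $\Gamma(h,u)$ for a zeroth-order one in the new variable. One then uses that $u\in\Dom((-\LL')^{3/2})$ iff $u\in\Dom(\LL')$ and $\bigl|\int\LL'u\cdot\LL'\phi\,dm'\bigr|\le C\,\E'(\phi)^{1/2}$ for all $\phi$; after the substitution the leading term is $\int\LL\tilde u\cdot\LL\tilde\phi\,e^{-2w}\,dm$, and a single integration by parts turns it into $-\int\Gamma(\LL\tilde u,\tilde\phi)\,e^{-2w}\,dm$, bounded by $C\,\|\tilde u\|_{\Dom((-\LL)^{3/2})}\,\E(\tilde\phi)^{1/2}$. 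This gives $u\in\Dom((-\LL')^{3/2})\Leftrightarrow e^{h/2}u\in\Dom((-\LL)^{3/2})$ with no curvature hypothesis and no second-order control on $h$; density follows by approximating $e^{h/2}u$ in $\A$ and multiplying back by $e^{-h/2}\in\A$.
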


\begin{proof}
i) Firstly, the boundedness of $h$ and $w$ implies that $\Dom(\E)=\Dom(\E')$. In other words, $\Dom((-\LL)^{1/2})=\Dom((-\LL')^{1/2})$.

ii) Next, recall that $u\in \Dom(\LL')$ if (and only if) $u\in  \Dom((-\LL')^{1/2})$ and $\exists C$ s.t.
$\E'(u,\phi)\le C\cdot \|\phi\|_{L^2(m')}$ for all $\phi\in\Dom((-\LL')^{1/2})$.
For $u,\phi\in\A$ we easily see
$$\E'(u,\phi)=-\int e^h (\LL u+ \Gamma(h,u))\,dm\le C\cdot \|\phi\|_{L^2(m')}$$
for $C=\|e^h\|_{\infty} \cdot\big(\|\LL u\|_{L^2(m)}+\| \Gamma(h)\|_{\infty}\cdot \E(u)^{1/2}\big)$.
The  estimate $\E'(u,\phi)\le C\cdot \|\phi\|_{L^2(m')}$ is preserved if we approximate $u\in\Dom(\LL)$ and $\phi\in\Dom(\E)$ by $u_n\in\A$ and $\phi_n\in\A$, resp.
This proves $\Dom(\LL)\subset\Dom(\LL')$. Exchanging the roles of $\LL$ and $\LL'$ yields the converse inclusion.
That is, $\Dom(\LL)=\Dom(\LL')$.

iii) Observe that $u\in\Dom((-\LL)^{1/2})\Leftrightarrow e^{h/2}u\in\Dom((-\LL)^{1/2})$ and that
$$u\in\Dom(-\LL)\Leftrightarrow e^{h/2}u\in\Dom(-\LL).$$
To see the latter, note that
$\E'(u,\phi)=\E(e^{h/2}u,e^{h/2}\phi)+\int u\phi e^{h/2}\LL e^{h/2}\,dm$.

iv) Our next claim is that
$$u\in\Dom((-\LL')^{3/2})\Leftrightarrow e^{h/2}u\in\Dom((-\LL)^{3/2}).$$
To prove this, recall that
$u\in\Dom((-\LL')^{3/2})$ if (and only if) $u\in  \Dom(-\LL')$ and $\exists C$ s.t.
\begin{equation}\label{dom-l3}
\int \LL' u\cdot \LL' \phi\,dm'\le C\cdot \E'(\phi)^{1/2}
\end{equation} for all $\phi\in\Dom(-\LL')$.
For $u,\phi\in\A$ put $\tilde u=e^{h/2}u$, $\tilde\phi=e^{h/2}\phi$. Then
\begin{eqnarray*}
\int \LL' u\cdot \LL'\phi\,dm'&=&
\int\big(\LL u+\Gamma(h,u)\big)\cdot \big(\LL\phi+\Gamma(h,\phi)\big)\cdot e^{h-2w}\,dm\\
&=&
\int\big(\LL\tilde u - u\LL e^{h/2 }\big)\cdot \big(\LL\tilde\phi - \phi\LL e^{h/2 }\big)\cdot e^{-2w}\,dm\\
&=& \int \LL\tilde u \cdot\LL \tilde\phi\cdot e^{-2w}dm+ \mbox{LOT}_1\\
&=&-\int \Gamma (\LL\tilde u ,\tilde\phi)\, e^{-2w}dm+ \mbox{LOT}_2\\
&\le& C\cdot \|\tilde u\|_{\Dom((-\LL)^{3/2})}\cdot  \E(\tilde \phi)^{1/2}
\end{eqnarray*}
where $\mbox{LOT}_1$ and $\mbox{LOT}_2$ denote 'low order terms' which can be estimated in terms of $\|u\|_{L^2(m)}$, $\E(u)$, $\|\LL u\| _{L^2(m)}$
and $\E(\phi)$. This proves \eqref{dom-l3} for all $u$ and $\phi\in\A$.
Due to the assumed density of $\A$ in all the $\Dom((-\LL)^{k/2})$ for $k=1,2,3$ and the previously proven equivalences
$\Dom((-\LL)^{1/2})=\Dom((-\LL')^{1/2})$ and
 $\Dom(\LL)=\Dom(\LL')$, the estimate \eqref{dom-l3}
 extends to all $u\in   \Dom(-\LL')$ and all $\phi\in\Dom(-\LL')$.
 This proves the implication ''$\Rightarrow$'' of the above claim. Again the converse implication follows by interchanging the roles of $\LL$ and $\LL'$.

v) Finally, we will prove that $\A$ is dense in $\Dom((-\LL')^{3/2})$. Let $u\in \Dom((-\LL')^{3/2})$ be given, put
$\tilde u=e^{h/2}u\in \Dom((-\LL')^{3/2})$ and choose an approximating sequence $\tilde u_n\in\A$ such that
$\E(\LL(\tilde u -\tilde u_n))+\int (\tilde u -\tilde u_n)^2\,dm \to0$ for $n\to\infty$.
But this already implies
$\E'(\LL'( u - u_n))+\int (u -u_n)^2\,dm \to0$ for $u_n=e^{-h/2}\tilde u_n\in\A$ since for every $\phi\in \A$
\begin{eqnarray*}
\E'(\LL'\phi)&=&
\int\Gamma\Big(e^{-2w}\big(\LL \phi+\Gamma(h,\phi)\big)\Big)e^h\,dm\\
&=&\int \Gamma\Big(e^{-2w}\big(e^{-h/2}\LL \tilde\phi-\tilde\phi \LL e^{h/2}\big)\Big)e^h\,dm\\
&\le& C_1\cdot \E(\LL \tilde \phi)+C_2 \cdot \int \tilde\phi^2\,dm
\end{eqnarray*}
where $\tilde\phi=e^{h/2}\phi$.
\end{proof}

Many spectral properties for $(\LL,\Dom(\LL))$ and functional inequalities involving it will follow -- typically with sharp constants -- from the Bakry-{\'E}mery estimate $\BE(\K,N)$ for $(\LL,\A,m)$, among them Poincar\'e inequalities, Sobolev and logarithmic Sobolev inequalities, concentration of measure estimates, isoperimetric inequalities, gradient estimates and heat kernel estimates.
We refer to the surveys \cite{Bak94} and \cite{Led2}. To mention at least one example, we state a fundamental  estimate for the spectral gap.

\begin{proposition}
Assume that the  $(\LL,\A,m)$ satisfies the $\BE(\K,N)$-condition. Then the spectral gap $\lambda$ of the operator $({\LL'},\Dom(\LL'))$ on $L^2(X,m')$ satisfies
\begin{equation*}
\lambda\ge\inf_{x\in X}\K'_\infty(x)
\end{equation*}
with the function $\K'_\infty$ from \eqref{dt-k-N}
where $f=e^{-w}$.
A more refined estimate yields
%\begin{equation*}
$\lambda\ge\frac{N'}{N'-1}\cdot \inf_{x\in X}\K'_{N'}(x)$
%\end{equation*}
for every $N'>N$
with the function $\K'_{N'}$ from \eqref{dt-k-N}.
\end{proposition}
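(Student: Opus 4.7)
The plan is to combine two ingredients: (a) the transferred Bakry--Emery estimate $\BE(\K'_{N'},N')$ for $\LL'$ granted by Corollary \ref{cor-g2-est}, together with the density conclusion of Lemma \ref{D3dense}; and (b) the classical integrated argument that turns a pointwise $\BE$-inequality into a spectral-gap bound, applied on $L^2(X,m')$ where $\LL'$ is self-adjoint.

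First, I would record two integration-by-parts identities available on the reversible framework for $\LL'$: for $u\in\A$ (and then by density on $\Dom(\LL')$),
\begin{equation*}
\int \Gamma'(u)\,dm' \;=\; -\int u\,\LL'u\,dm', \qquad
\int \Gamma'_2(u)\,dm' \;=\; \int (\LL'u)^2\,dm'.
\end{equation*}
The second follows from the definition of $\Gamma'_2$, the invariance of $m'$ under $\LL'$ (so that $\int \LL'\Gamma'(u)\,dm'=0$), and the reversibility of $m'$ (so that $\int \Gamma'(u,\LL'u)\,dm'=-\int (\LL'u)^2\,dm'$).

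Next, I would integrate the pointwise inequality $\Gamma'_2(u)\ge \K'_{N'}\,\Gamma'(u)+\frac{1}{N'}(\LL'u)^2$ against $dm'$. Extending this to $u\in\Dom((-\LL')^{3/2})$ uses exactly the density statement of Lemma \ref{D3dense}; combined with the two identities above and with $\K'_{N'}\ge \inf_x \K'_{N'}(x)$, this gives, for every $u\in\Dom(\LL')$ orthogonal to constants,
\begin{equation*}
\Big(1-\frac{1}{N'}\Big)\int (\LL'u)^2\,dm' \;\ge\; \inf_x\K'_{N'}(x)\,\cdot \E'(u).
\end{equation*}
Applied to an eigenfunction $u$ with $\LL'u=-\mu u$, both sides equal $\mu^2\|u\|^2$ and $\mu\|u\|^2$ times the relevant constants, and one solves $\mu\ge \frac{N'}{N'-1}\inf_x \K'_{N'}(x)$. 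For a general $u$ in the spectral subspace above $0$ one invokes the spectral theorem / a Weyl sequence for $\lambda=\inf\sigma(-\LL'|_{1^\perp})$: along such a sequence $(u_n)$ with $\|u_n\|=1$ one has $\E'(u_n)\to\lambda$ and $\|\LL'u_n\|^2\to\lambda^2$, so passing to the limit yields the same bound with $\mu$ replaced by $\lambda$. The first (coarser) estimate is obtained either by letting $N'\to\infty$ or, more directly, by running the same argument with $\BE(\K'_\infty,\infty)$ in place of $\BE(\K'_{N'},N')$, in which case the factor $(N'-1)/N'$ drops out.

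The main obstacle is justifying the integration of the pointwise $\BE$-inequality for elements of $\Dom(\LL')$ that are not in $\A$. This is exactly what the definition of $(\LL',\A,m')$ satisfying $\BE(\K'_{N'},N')$ plus Lemma \ref{D3dense} buy us: density of $\A$ in $\Dom((-\LL')^{3/2})$ lets one approximate $u$ by $u_n\in\A$ with convergence strong enough that $\int\Gamma'_2(u_n)\,dm'=\|\LL'u_n\|^2\to\|\LL'u\|^2$ and $\E'(u_n)\to \E'(u)$, so the integrated $\BE$-inequality passes to the limit. A small additional subtlety is that a priori $\lambda$ may only belong to the essential spectrum, so the Weyl-sequence replacement of a genuine eigenfunction must be carried out carefully; however, both integrated identities above and the integrated $\BE$-inequality are continuous in the graph norm of $\LL'$, so the argument goes through.
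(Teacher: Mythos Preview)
Your proposal is correct and follows essentially the same route as the paper: invoke Corollary \ref{cor-g2-est} to obtain $\BE(\K'_{N'},N')$ for $\LL'$, use Lemma \ref{D3dense} to pass to constants $\inf_X\K'_{N'}$ on the full domain, and then deduce the Lichnerowicz-type spectral gap bound. The only difference is that the paper simply cites \cite{Bak-Eme} for the last step, whereas you write out the classical integrated $\Gamma_2$-argument (with the identities $\int\Gamma'_2(u)\,dm'=\int(\LL'u)^2\,dm'$ and $\int\Gamma'(u)\,dm'=-\int u\,\LL'u\,dm'$) and the Weyl-sequence reduction explicitly.
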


\begin{proof}
According to Corollary \ref{cor-g2-est},
$({\LL}',\A)$ satisfies the $\BE(\mathrm{K}',N')$-condition with the function $\K'$ given by \eqref{dt-k-N}.
Finally, according to Lemma \ref{D3dense}, $\A$ is dense in $\Dom((-\LL')^{3/2})$.
Thus $(\LL',\Dom(\LL'))$ satisfies the
$\BE(\mathrm{K}',N')$-condition with $\mathrm{K}':=\inf_{x\in X}\K'(x)$.
According to \cite{Bak-Eme}, every $\BE(\mathrm{K}',N')$-condition with a constant $\mathrm{K}'>0$ implies a spectral gap estimate of Lichnerowicz type $\lambda\ge\frac{N'}{N'-1}\cdot \mathrm{K}'$.
\end{proof}

The assertion of Corollary \ref{g2-est-time} also allows for a subtle generalization of the well-known Bonnet-Myers Theorem.
\begin{proposition} Assume that the  $(\LL,\A,m)$ satisfies the $\BE(\K,N)$-condition and that there exist numbers $N^*>N, N^*\ge2$, $\mathrm{K}>0$ and a function $f\in\A$, $|f|\le 1$ such that
\begin{eqnarray}\label{mod-myers}
f^2\,\K
+
\frac12{\LL}f^2-N^*\,\Gamma(f)\ge \mathrm{K}.
\end{eqnarray}
 Then
\begin{eqnarray}
\diam(X)\le \frac\pi{\sqrt{\mathrm{K}}}\cdot\sqrt{N-1+ \frac{(N-2)^2}{N^*-N}}.
\end{eqnarray}
where $\diam(X)=\sup\{u(x)-u(y):\ u\in\A, \Gamma(u)\le 1\}$ denotes the diameter of $X$ w.r.t. the `intrinsic metric' induced by $({\LL},\A)$.
\end{proposition}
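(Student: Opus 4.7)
My plan is to reduce the assertion to the classical Bakry--Ledoux Bonnet--Myers diameter bound via the time change $\LL'=f^2\LL$. The key observation is that the hypothesis \eqref{mod-myers} is precisely the $\BE$-inequality produced by Corollary~\ref{be-tc} for a well-chosen target dimension $N'$. Concretely, I will take
\[N':=N+\frac{(N-2)^2}{N^*-N},\]
which satisfies $N'>N$ (since $N^*>N$) and, after a short manipulation, $(N'-N)(N^*-N)=(N-2)^2$, equivalently
\[N^*=2+\frac{(N-2)(N'-2)}{N'-N},\]
matching the formula in Corollary~\ref{be-tc}. Note also that $N'-1=N-1+\frac{(N-2)^2}{N^*-N}$, which is the numerical dimension entering the final diameter bound.

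With this choice, Corollary~\ref{be-tc} applied to $\LL'=f^2\LL$ gives that $\LL'$ satisfies $\BE(\K',N')$ with
\[\K'=f^2\K+\tfrac12\LL f^2-N^*\Gamma(f)\ge\mathrm{K}\]
everywhere on $X$, by \eqref{mod-myers}. Thus $\LL'$ obeys a \emph{constant} $\BE(\mathrm{K},N')$-condition with $\mathrm{K}>0$. I will then invoke the classical Bakry--Ledoux Bonnet--Myers theorem for $\BE(\mathrm{K},N')$ with $\mathrm{K}>0$: the intrinsic diameter of $X$ with respect to $\Gamma'=f^2\Gamma$ is controlled by
\[\diam'(X):=\sup\{u(x)-u(y):u\in\A,\ \Gamma'(u)\le 1\}\le\frac{\pi}{\sqrt{\mathrm{K}}}\sqrt{N'-1}.\]

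To conclude, I compare the two intrinsic diameters. Since $|f|\le 1$, $\Gamma'(u)=f^2\Gamma(u)\le\Gamma(u)$ pointwise, so every $u\in\A$ admissible for the definition of $\diam(X)$ is also admissible for the definition of $\diam'(X)$. Hence $\diam(X)\le\diam'(X)$, and combining with the previous estimate and the identity $N'-1=N-1+\frac{(N-2)^2}{N^*-N}$ produces the claimed bound. The main technical ingredient beyond the algebraic matching of dimensions is the Bonnet--Myers diameter theorem for $\BE(\mathrm{K},N')$ in the current $\Gamma_2$-calculus setting; to apply it in the triple sense $(\LL',\A,m')$ rather than only pointwise, one transfers the density of $\A$ in $\Dom((-\LL)^{3/2})$ to density in $\Dom((-\LL')^{3/2})$ via Lemma~\ref{D3dense}. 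Everything else is routine bookkeeping of the formulas from Corollary~\ref{be-tc}.
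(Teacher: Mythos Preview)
Your argument is correct and matches the paper's: time-change to $\LL'=f^2\LL$, apply Corollary~\ref{be-tc} with $N'=N+\frac{(N-2)^2}{N^*-N}$ (so that the $N^*$ in that corollary coincides with the given one), invoke the Bakry--Ledoux diameter bound under $\BE(\mathrm K,N')$, and compare intrinsic diameters via $|f|\le 1$. The only point to add is the edge case $N=2$, where your formula yields $N'=N$ rather than $N'>N$; the paper handles this separately by choosing an ad hoc $N'>2$.
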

The choice $f=1$ will lead (in the limit $N^*\to\infty$) to the classical Bonnet-Myers Theorem.

\begin{proof} Put $N'=N+\frac{(N-2)^2}{N^*-N}$ in the case $N\not=2$ and $N'=3$ in the case $N=2$.
According to Corollary \ref{be-tc}, the operator ${\LL}'=f^2 {\LL}$ satisfies $\BE(\K',N')$ with $\K'$ given by \eqref{k-time}.
Together with the assumption \eqref{mod-myers} this yields the $\BE(\mathrm{K},N')$-condition for ${\LL}'$.
According to \cite{BakLed96} this implies the diameter bound w.r.t. the intrinsic metric induced by ${\LL}'$.
Due to the assumption $|f|\le1$ the latter is bounded by the intrinsic metric induced by ${\LL}$.
\end{proof}

\section{Smooth Metric Measure Spaces}

Finally, we will study curvature bounds for metric measure spaces and their behavior under transformation of the data.
A triple $(X,d,m)$ is called \emph{metric measure space} if $(X,d)$ is a complete separable metric space and if $m$ is a locally finite measure on the Borel field of $X$. Without restriction we will always assume that $m$ has full topological support.

\begin{definition}
Given (extended) numbers $\KK\in\R$ and $N\in[1,\infty]$ we say that $(X,d,m)$ satisfies the \emph{entropic curvature-dimension condition} $\CD^e(\KK,N)$ if the Boltzmann entropy
\begin{eqnarray*}
S: \mu\mapsto\left\{
\begin{array}{ll}
\int\left(\frac{d\mu}{dm}\right)\log\left(\frac{d\mu}{dm}\right)\,dm&\quad \mbox{if }\mu\ll m,\\
+\infty&\quad \mbox{else.}
\end{array}\right.
\end{eqnarray*}
is $(\KK,N)$-convex on the $L^2$-Wasserstein space $({\mathcal P}_2(X), d_W)$.
\end{definition}
Here a function $S$ on a metric space $(Y,d_Y)$ is called $(\KK,N)$-convex if every pair of points $y_0,y_1\in Y$ can be joined by a (minimizing, constant speed) geodesic $\big(y(t)\big)_{0\le t\le1}$ in $Y$ such that the function $u(t)=e^{-\frac1N S(y(t))}$ is lower semicontinuous in $t\in[0,1]$, continuous in $(0,1)$ and satisfies
$$u''\le -\frac KN |\dot y|^2\cdot u$$
weakly in $(0,1)$.
In the limit $N\to\infty$ this leads to the usual $\KK$-convexity; if in addition $\KK=0$ it yields the classical convexity.
In the general case, $(\KK,N)$-convexity gives a precise meaning for weak solutions to the differential inequality
$$D^2 S-\frac1N \, DS\,\otimes\, DS\ge K$$
on geodesic spaces.

Note that the entropic curvature-dimension condition
implies that $(X,d,m)$ is a geodesic space. More precisely,
$d(x,y)
=\inf\Big\{ \int_0^1 |\dot\gamma_t|\,dt: \ \gamma:[0,1]\to X \mbox{ rectifiable, } \gamma_0=x, \gamma_1=y\Big\}$ for each $x,y\in X$.

We want to prove that the entropic curvature-dimension condition is preserved (with modified parameters) under the most natural transformations of the data $d$ and $m$. And we want to analyze how the parameters $\KK$ and $N$ will change.
The transformation which we have in mind are
\begin{itemize}
\item
given a measurable function $v$ on $X$, we replace the measure $m$ by the weighted measure with Radon-Nikodym derivative $e^v$:
\begin{equation*}
m'=e^v\,m;
\end{equation*}
\item
given a function $w$ on $X$, we replace the length metric $d$ by the weighted length metric with conformal factor $e^w$:
\begin{equation}\label{d-weighted}
d'(x,y)
=\inf\Big\{ \int_0^1 |\dot\gamma_t|\cdot e^{w(\gamma_t)}\,dt: \ \gamma:[0,1]\to X \mbox{ rectifiable, } \gamma_0=x, \gamma_1=y\Big\}.
\end{equation}
\end{itemize}

Treating these questions in full generality is beyond the scope of this paper. We will restrict ourselves here to \emph{smooth} metric measure spaces which allows to benefit from the results of the previous chapters. The general case
requires to deal with subtle regularity issues. We refer to \cite{EKS} and \cite{Sav} for such  approximation and smoothing procedures in the general case.

\begin{definition} A metric measure space $(X,d,m)$ is called \emph{smooth} if there exists a diffusion operator $\LL$ defined on an algebra $\A$ as above (chapter 2) such that $\A\subset L^2(X,m)$ and
\begin{itemize}
\item $m$ is a reversible invariant measure for $({\LL},\A)$
and $\A$ is dense in $\Dom((-\LL)^{3/2})$;
\item $d$ is the intrinsic metric for ${\LL}$, i.e. for all $x,y\in X$
$$d(x,y)=\sup\{ u(x)-u(y): \  u\in\Dom(\E)\cap {\mathcal C}_b(X), \hat\Gamma(u)\le m\}.$$
\end{itemize}
Here $\hat\Gamma(.)$ denotes the so-called energy measure, i.e. the measure-valued quadratic form on
$\Dom(\E)\cap L^\infty(X)$ extending the quadratic form
 $\Gamma(.)$ on $\A$.
\end{definition}
In particular, each $u\in\A$ will be  bounded and continuous.

\begin{theorem} Let $(X,d,m)$ be a smooth metric measure space. % with diffusion operator $(\LL,\A)$.
Given $v,w\in\A$ define  $m'=e^v\,m$  and $d'$ as in
\eqref{d-weighted}.
 If $(X,d,m)$ satisfies the entropic curvature-dimension condition $\CD^e(\KK,N)$ for constants $N\in[1,\infty)$ and $\KK\in\R$ then for each  $N'\in(N,\infty]$ the metric measure space $(X,d',m')$  satisfies the entropic curvature-dimension condition $\CD^e(\KK',N')$ for
\begin{eqnarray}\label{mms-N}
\KK'&=& \inf_X \, e^{-2w}\Big[\KK-
{\LL}w+\Gamma(w,2w-v)\nonumber\\
&&\qquad\qquad-
\sup_{u\in\A}
\frac1{\Gamma(u)}\Big( \frac1{N'-N}\Gamma(v-Nw,u)^2+(N-2)\Gamma(w,u)^2\\
&&\qquad\qquad\qquad\qquad\qquad\qquad\qquad+ {\HH}_{v-2w}(u,u)-2 \Gamma(w,u)\Gamma(v-2w,u)\Big)\Big].\nonumber
\end{eqnarray}
If $w=0$ also $N=N'=\infty$ is admissible; if $w=\frac1N v$ also $N'=N$ is admissible.
\end{theorem}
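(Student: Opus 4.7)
The plan is to translate the transformations $d\mapsto d'$ and $m\mapsto m'$ into a transformation of the diffusion operator underlying the smooth mms, apply the Bakry-Emery transformation formula of Corollary \ref{cor-g2-est}, and then convert the result back to an entropic curvature-dimension statement.

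First I would set $f=e^{-w}$ and $h=v-2w$ and consider $\LL'=e^{-2w}(\LL+\Gamma(h,\cdot))$ with associated square field $\Gamma'(u)=e^{-2w}\Gamma(u)$. Two geometric compatibility checks are needed: (i) $m'$ is reversible for $\LL'$ and the Dirichlet form $\E'(u)=\int \Gamma(u)\,e^{v-2w}\,dm$ on $L^2(m')$ is the one associated with $(\LL',\A)$ -- this follows from Section \ref{df} with the choices $h=v-2w$ and $w$; (ii) the intrinsic metric of $\LL'$ coincides with $d'$. For (ii), the scaling $\Gamma'(u)=e^{-2w}\Gamma(u)$ means that $\Gamma'(u)\le 1$ translates, along any rectifiable curve $\gamma$, into $|\dot u(\gamma_t)|\le e^{w(\gamma_t)}|\dot\gamma_t|_\Gamma$, so by duality the intrinsic distance picks up precisely the $e^w$-weighted length from \eqref{d-weighted}.

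Next I would apply Corollary \ref{cor-g2-est} to the operator $\LL'$. The hypothesis $\CD^e(\KK,N)$ for the smooth mms $(X,d,m)$ is equivalent (via Erbar-Kuwada-Sturm, see \cite{EKS}) to $\BE(\KK,N)$ for $(\LL,\A,m)$. Corollary \ref{cor-g2-est} then yields $\BE(\K'_{N'},N')$ for $\LL'$ for every $N'>N$, with $\K'_{N'}$ given by \eqref{dt-k-N}. Substituting $f=e^{-w}$ and $h=v-2w$, the crucial simplifications are
\[
\tfrac12\LL f^2-2\Gamma(f)+f\Gamma(h,f)=e^{-2w}\bigl(-\LL w+\Gamma(w,2w-v)\bigr)
\]
on the linear side and
\[
(N-2)\Gamma(f,u)+f\Gamma(h,u)=e^{-w}\Gamma(v-Nw,u)
\]
inside the squared term, after which $e^{-2w}$ factors out uniformly and the bracket of \eqref{mms-N} emerges verbatim. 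Taking $\inf_X$ produces the constant $\KK'$. A final appeal to the $\BE\Leftrightarrow\CD^e$ equivalence, now for $(X,d',m')$ (which is again smooth, with $\A$ dense in $\Dom((-\LL')^{3/2})$ by Lemma \ref{D3dense}), delivers the desired $\CD^e(\KK',N')$.

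The two endpoint cases drop out of this same framework: $w=0$ reduces the construction to a pure drift transformation, for which Corollary \ref{drift-be} already covers $N=N'=\infty$; and $w=\tfrac1N v$ gives $h=(N-2)w$, making $\LL'$ precisely the conformal transformation of Chapter 7, where Theorem \ref{conf-ricc} yields an equality and therefore admits $N'=N$ without dimension inflation. The main obstacle I anticipate is not the $\Gamma$-calculus (which reduces to bookkeeping once Corollary \ref{cor-g2-est} is in hand) but the two 'bridging' steps with the mms side: identifying $d'$ as the intrinsic metric of $\LL'$, and invoking the $\BE\Leftrightarrow\CD^e$ equivalence in the smooth regime. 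Both are standard with the heat-flow and energy-measure machinery of \cite{EKS} and \cite{Sav}, but they sit outside the purely algebraic estimates developed earlier in this paper and require care in the smooth setup.
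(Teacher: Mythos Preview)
Your proposal is correct and follows essentially the same route as the paper: set $f=e^{-w}$, $h=v-2w$, pass from $\CD^e(\KK,N)$ to $\BE(\KK,N)$ via the EKS equivalence (Lemma \ref{bochner-lemma}), apply Corollary \ref{cor-g2-est} to obtain $\BE(\KK',N')$ for $\LL'$, identify $d'$ as the intrinsic metric for $\E'$ via the energy-measure relation $\hat\Gamma'=e^{-2w}\hat\Gamma$, invoke Lemma \ref{D3dense} for density, and convert back via Lemma \ref{bochner-lemma}. The paper adds one preliminary step you glossed over---that $\CD^e(\KK,N)$ with $N<\infty$ forces local compactness (\cite{EKS}, Prop.~3.6), which is what permits the identification of $\E$ with the Cheeger energy via \cite{AGS}---but your acknowledgment that the ``bridging'' steps rely on the EKS/AGS machinery covers this in spirit.
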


\begin{remark} The case $w=0$ of a pure measure transformation (or 'drift transformation') is well studied, see \cite{Stu1,Stu2, LV}.
Thus let us briefly focus on the case $v=0$ of a pure metric transformation.
In this case, formula \ref{mms-N} simplifies to
\begin{eqnarray*}
\KK'&=& \inf_X \, e^{-2w}\Big[\KK-
{\LL}w+2\Gamma(w)\nonumber\\
&&\qquad\qquad-
\sup_{u\in\A}
\frac1{\Gamma(u)}\Big( \big(\frac{N'\,N}{N'-N}+2\big)\Gamma(w,u)^2-2 {\HH}_{w}(u,u)\Big)\Big]
\end{eqnarray*}
or in terms of $f=e^{-w}$
\begin{eqnarray}\label{mms-N-no-measure}
\KK'&=& \inf_X \, \Big[\KK f^2+\frac12
{\LL}f^2-
\sup_{u\in\A}
\frac1{\Gamma(u)}\Big( \big(\frac{N'\,N}{N'-N}-
2\big)\Gamma(f,u)^2+ {\HH}_{f^2}(u,u)\Big)\Big].
\end{eqnarray}
In the case $N'=\infty$, the expression $\frac{N'\,N}{N'-N}$ simplifies to $N$.
\end{remark}

\begin{proof}
If $N=\infty$, the only admissible choice is $N'=\infty$ and $w=0$. This drift  transformation is covered by \cite{Stu1, LV}.
Thus throughout the rest $N<\infty$.

Firstly, we then observe that the $\CD^e(\KK,N)$-condition implies that the underlying space is locally compact (\cite{EKS}, Prop. 3.6).
This guarantees that $(X,d,m)$ satisfies the criteria of \cite{AGS}, Def. 3.6, Def. 3.13.
Thus secondly, we conclude that the Dirichlet form $(\E,\Dom(\E))$ on $L^2(X,m)$ induced by the operator $(\LL,\A)$ (as considered in chapter \ref{df})
coincides with the Cheeger energy on $L^2(X,m)$ induced by the metric $d$ (\cite{AGS}, Thm 3.14).
In particular, $(X,d,m)$ is infinitesimally Hilbertian.
According to \cite{EKS}, Cor. 5.1, the condition $\CD^e(\KK,N)$ implies the Bakry-Ledoux gradient estimate from Lemma \ref{bochner-lemma} below
and thus $(\LL,\A)$ satisfies the Bakry-{\'E}mery condition $\BE(\KK,N)$.
According to Corollary \ref{cor-g2-est} (with $h=v-2w$), therefore, $(\LL',\A)$ satisfies
the Bakry-{\'E}mery condition $\BE(\KK',N')$ for any $N'>N$ and $\KK'$ given by
\eqref{mms-N}.

Obviously, the Dirichlet forms $\E'$ and $\E$ (as well as the measures $m$ and $m'$) are comparable and
$\hat\Gamma'(.)=e^{v-2w}\cdot\hat\Gamma(.)$. 
Thus
\begin{eqnarray*}%\label{d-weighted}
d_{\E'}(x,y)&=&\sup\Big\{
 u(x)-u(y): \  u\in\Dom(\E)\cap {\mathcal C}_b(X), \hat\Gamma(u)\le e^{2w}\, m\Big\}.
\end{eqnarray*}
Moreover,  the intrinsic metrics for  both Dirichlet forms are length metrics with the same set of rectifiable curves. For each rectifiable curve $\gamma:[0,1]\to X$ its length w.r.t. the metric $d_{\E'}$ therefore is 
\begin{eqnarray*}%\label{d-weighted}
\mathrm{Length}_{\E'}(\gamma)&=& \int_0^1 |\dot\gamma_t|\cdot e^{w(\gamma_t)}\,dt.
\end{eqnarray*}
Thus $d_{\E'}$ coincides with the metric $d'$ as defined in \eqref{d-weighted}.

By assumption $\A$ is dense in $\Dom(({-\LL)^{3/2}})$. Thus according to Lemma \ref{D3dense}
it is also dense in $\Dom(({-\LL')^{3/2}})$.
Thus again by Lemma \ref{bochner-lemma}
 the Bakry-{\'E}mery condition $\BE(\KK',N')$ is equivalent  to the entropic curvature-dimension condition $\CD^e(\KK',N')$ for the smooth mms $(X,d',m')$.\end{proof}
To summarize
$$\CD^e(\KK,N)\mbox{ for }(X,d,m)\ \Leftrightarrow\ \BE(\KK,N)\mbox{ for }\LL\ \Rightarrow \ \BE(\KK',N')\mbox{ for }\LL'\ \Leftrightarrow \ \CD^e(\KK',N')\mbox{ for }(X,d,m).$$

\begin{lemma}\label{bochner-lemma} For any smooth metric measure space $(X,d,m)$ and any $\KK\in\R$, $N\in[1,\infty)$ the following are equivalent
\begin{itemize}
\item[(i)] $({\LL},\A)$ satisfies the Bakry-{\'E}mery condition $\BE(\KK,N)$;
\item[(ii)] $\forall u\in\Dom(({-\LL)^{3/2}})$
and $\forall \phi\in\Dom({\LL})\cap L^\infty(X,m)$ with $\phi\ge0$, ${\LL}\phi\in L^\infty(X,m)$;
\begin{equation}\label{bochner1}
\frac12\int {\LL}\phi\cdot \Gamma(u)\,dm -\int \phi \Gamma(u,{\LL}u)\,dm\ge \KK\int\phi \Gamma(u)\,dm+\frac1N \int \phi ({\LL}u)^2\,dm;
\end{equation}
\item[(iii)] $\forall u\in\Dom(\E)$ with bounded $\Gamma(u)$ and $\forall t>0$
\begin{equation*}
\Gamma(P_tu)+  \frac{4\KK t^2}{N(e^{2\KK t}-1)} ({\LL}P_tu)^2\le e^{-2\KK t}P_t\Gamma(u);
\end{equation*}
\item[(iv)] $(X,d,m)$ satisfies the entropic curvature-dimension condition $\CD^e(\KK,N)$.
\end{itemize}
\end{lemma}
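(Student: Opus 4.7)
The plan is to establish the chain (i)$\Leftrightarrow$(ii)$\Leftrightarrow$(iii)$\Leftrightarrow$(iv). The first two equivalences are of a local (pointwise) nature and only use integration by parts together with the semigroup interpolation of Bakry--Ledoux, while the final equivalence (iii)$\Leftrightarrow$(iv) is the genuinely metric-measure statement and will be reduced to an existing result in the literature via the smoothness hypothesis.

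\textbf{Step 1: (i)$\Leftrightarrow$(ii).} The key identity is the integration-by-parts formula, valid because $m$ is $\LL$-reversible and invariant: for $u\in\A$ and $\phi\in\A$ one has
\begin{equation*}
\int \phi\, \Gamma_2(u)\,dm \;=\; \tfrac12\int \LL\phi\cdot\Gamma(u)\,dm + \int \phi\,\Gamma(u,\LL u)\,dm.
\end{equation*}
If (i) holds pointwise, testing against nonnegative $\phi$ and integrating gives (ii) on $\A$; the density of $\A$ in $\Dom((-\LL)^{3/2})$ and in $\Dom(\LL)\cap L^\infty$ (in the appropriate graph norms, using $\|\LL\phi\|_\infty,\|\phi\|_\infty$-bounds) then extends \eqref{bochner1} to the full class stated. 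Conversely, if (ii) holds, choose $\phi$ to approximate a Dirac mass at a fixed $x\in X$ (via heat-kernel smoothings of indicators of small balls), and use continuity of $\Gamma_2(u),\Gamma(u),(\LL u)^2$ on $\A$ to recover the pointwise inequality (i).

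\textbf{Step 2: (ii)$\Leftrightarrow$(iii).} I follow the Bakry--Ledoux interpolation argument. For $u$ with bounded $\Gamma(u)$ and $t>0$ consider
\begin{equation*}
F(s)\;=\;e^{-2\KK s}\,P_s\bigl(\Gamma(P_{t-s}u)\bigr), \qquad s\in[0,t].
\end{equation*}
A direct computation using $[\LL,P_s]=0$ yields $F'(s)=2e^{-2\KK s}P_s\bigl[\Gamma_2(P_{t-s}u)-\KK\,\Gamma(P_{t-s}u)\bigr]$. By (ii) and the diffusion property (the pointwise version of which transfers through the semigroup after the Step 1 localization), $F'(s)\ge \tfrac{2}{N}e^{-2\KK s}P_s\bigl[(\LL P_{t-s}u)^2\bigr]\ge \tfrac{2}{N}e^{-2\KK s}(\LL P_tu)^2$, where the last step is Jensen for $P_s$ together with $P_s\LL P_{t-s}u=\LL P_tu$. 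Integrating in $s$ over $[0,t]$ and computing $\int_0^t e^{-2\KK s}\,ds$ gives exactly the weighted gradient bound (iii); a careful bookkeeping of the constants produces the factor $\tfrac{4\KK t^2}{N(e^{2\KK t}-1)}$ appearing in the statement. Conversely, (iii) reduces to (ii) by writing both sides as functions of $t$, observing equality at $t=0$, and taking the first-order Taylor expansion at $t=0^+$: the leading-order inequality is exactly the integrated Bochner inequality \eqref{bochner1}, tested against $\phi=1$ on small regions, and localization as in Step 1 recovers it against all admissible $\phi$.

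\textbf{Step 3: (iii)$\Leftrightarrow$(iv).} This is the central metric-measure equivalence and is where the assumption that $(X,d,m)$ is smooth is decisive. Under this assumption, the Cheeger energy on $L^2(X,m)$ coincides with the Dirichlet form $\E$ associated to $(\LL,\A)$, so $(X,d,m)$ is infinitesimally Hilbertian and the intrinsic metric of $\E$ coincides with $d$. Thus the Bakry--Ledoux gradient estimate (iii) in the form stated is precisely the $L^2$-Bakry--Ledoux estimate on the mms. The equivalence with $\CD^e(\KK,N)$ is then the Erbar--Kuwada--Sturm theorem (Thm.\ of \cite{EKS}, used one direction already in the proof of the preceding theorem via Cor.\ 5.1 there); it proceeds by showing that the heat flow $P_t$ is the $L^2$-Wasserstein gradient flow of $S$ and then translating $(\KK,N)$-convexity of $S$ along $W_2$-geodesics into the semigroup estimate (iii), and back. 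Both directions require only the smoothness hypothesis and the infinitesimal Hilbertianity just verified.

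\textbf{Main obstacle.} Steps 1 and 2 are robust and essentially algebraic once the right test-function density is established; the delicate point, and the main conceptual obstacle, is Step 3, where one must pass between the analytic formulation on the diffusion algebra $\A$ and the metric/Wasserstein formulation on $\mathcal{P}_2(X)$. This passage rests on identifying the Cheeger energy with $\E$ (which is why the full smoothness assumption, including density of $\A$ in $\Dom((-\LL)^{3/2})$ and the intrinsic-metric identification, was built into the definition) and on the quantitative EKS correspondence between the Bakry--Ledoux inequality with the sharp factor $\tfrac{4\KK t^2}{N(e^{2\KK t}-1)}$ and $\CD^e(\KK,N)$. With those inputs available, the equivalence follows.
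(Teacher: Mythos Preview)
Your plan and the paper's proof diverge in a substantive way. The paper proves only (i)$\Leftrightarrow$(ii) directly and defers the entire chain (ii)$\Leftrightarrow$(iii)$\Leftrightarrow$(iv) to \cite{EKS}; you instead try to run the Bakry--Ledoux interpolation for (ii)$\Rightarrow$(iii) by hand. This is precisely the step the paper warns is \emph{not} routine here: in the discussion preceding the proof it says that if $\A$ were $P_t$-invariant the argument would be ``more or less standard,'' but that this invariance is ``too restrictive'' to assume, and that the ``main challenge'' is to get the Bochner inequality for a $P_t$-invariant class. Your Step~2 walks straight into this trap: you differentiate $F(s)=e^{-2\KK s}P_s\Gamma(P_{t-s}u)$ and then need $\Gamma_2(P_{t-s}u)-\KK\Gamma(P_{t-s}u)\ge \tfrac1N(\LL P_{t-s}u)^2$ \emph{pointwise}, but $P_{t-s}u\notin\A$ in general, so neither (i) nor the localized (ii) gives you this. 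The parenthetical ``the pointwise version of which transfers through the semigroup after the Step~1 localization'' is exactly the missing idea, not a justification of it. What \cite{EKS} does (and what you would need to reproduce) is to run the interpolation in its \emph{integrated} form, tested against a fixed $\phi$, using (ii) with $u$ replaced by $P_{t-s}u\in\Dom((-\LL)^{3/2})$; your pointwise computation with $P_s$ on the outside does not accomplish this.

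Two smaller points. First, your constant bookkeeping in Step~2 is off: the naive integration $\int_0^t e^{-2\KK s}\,ds$ produces $\tfrac{1-e^{-2\KK t}}{N\KK}$, not $\tfrac{4\KK t^2}{N(e^{2\KK t}-1)}$; these agree only to leading order at $t=0$. Second, in Step~1 you invoke density of $\A$ in $\Dom(\LL)\cap L^\infty$ with $L^\infty$ control on $\LL\phi$, but the smoothness hypothesis only gives density in $\Dom((-\LL)^{3/2})$; the paper sidesteps this by passing to the weak form for $\phi\in\Dom(\LL)$ via the symmetry of $\LL$, and only approximates in the $u$-variable. Your Step~1 direction (i)$\Rightarrow$(ii) is otherwise the same as the paper's, and (ii)$\Rightarrow$(i) the paper simply calls trivial.
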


If we assumed that the algebra $\A$ were invariant under the semigroup $P_t$ then the implication of (i) $\Rightarrow$ (iii) would be more or less standard.
Following \cite{Led2} we could conclude (iii) for all $f\in\A$ by a simple differentiation/integration argument. Then (iii) in full generality would follow by a straightforward density argument.
However, assuming that $\A$ is invariant under $P_t$ in general is too restrictive.

Our main challenge will be to verify the Bochner inequality with parameters $\KK$ and $N$ for a `large' class of functions  which contains $\A$ and which is invariant under $P_t$. This is property (ii).

\begin{proof}
The equivalence of (ii), (iii) and (iv) was proven in \cite{EKS}.
%(Instead of $u\in\Dom(\LL^2)$ is was assumed $u\in\Dom(\LL)$ with $\LL u\in \Dom(\E)$. The arguments works with any of these assumptions.)
%Note that we  restrict ourselves to $u$ with (essentially) bounded $\Gamma(u)$, a property which  -- according to (iii) -- is preserved by the %semigroup.
%
The implication (ii)$\Rightarrow$(i) is trivial.
To proof the converse, let us assume (i).
Multiplying this pointwise inequality for $u\in\A$ by a nonnegative $\phi$ and integrating w.r.t. $m$ yields
\begin{equation*}\label{bochner2}
\frac12\int \phi\cdot {\LL}\Gamma(u)\,dm -\int \phi \Gamma(u,{\LL}u)\,dm\ge \KK\int\phi \Gamma(u)\,dm+\frac1N \int \phi ({\LL}u)^2\,dm.
\end{equation*}
For $\phi\in\Dom(\LL)$, the symmetry of $L$ then yields
\eqref{bochner1} for all $u\in\A$.

By assumption, the algebra $\A$ is dense in
$\Dom((-\LL)^{3/2})$.
Any $u\in \Dom((-\LL)^{3/2})$, therefore can be approximated by $u_n\in \A$ such that
$\Gamma(u_n)\to \Gamma(u)$, $(\LL u_n)^2\to (\LL u)^2$  and $\Gamma(u_n,\LL u_n)\to \Gamma(u,\LL u)$ in $L^1(X,m)$.
Hence,
 we may pass to the limit  in inequality \eqref{bochner1}. This proves the claim.
\end{proof}

\medskip

{\it Notes added in proof.} After finishing (the first version of) this paper, the new monograph \cite{BGL} appeared which contains at various places calculations (e.g. section 6.9.2 or C.6) similar to those in the current paper. However, none of our main results is obtained there.

We also would like to mention the follow-up work by Bang-Xian Han and Anna Mkrtchyan \cite{HM}
which extend several results of this paper to the setting of metric measure spaces without `smoothness' assumptions.


\begin{thebibliography}{999}


\bibitem{AGS}
\textsc{Ambrosio, L. and Gigli, N. and Savar\'{e}, G.},
Bakry-\'{E}mery curvature-dimension condition and {R}iemannian {R}icci curvature bounds,
Preprint  arXiv:1209.5786
(2012).


%\bibitem{Amb-Mon-Sav}
%\textsc{Ambrosio, L. and Mondino, A. and Savar\'{e}, G.},
%\newblock{Nonlinear diffusion equations and curvature conditions in metric measure spaces},
%\newblock{in preparation}.

\bibitem{Bak94}
\textsc{Bakry, D.},
L'hypercontractivit\'e et son utilisation en th\'eorie des semigroupes, pp. 1-114 in
\textit{Lectures on probability theory, Saint-Flour 1992},
Lecture Notes in Math., 1581, Springer, 1994.


\bibitem{BakLed96}
\textsc{Bakry, D., Ledoux, M.},
Sobolev inequalities and Myers's diameter theorem for an abstract Markov generator,
Duke Math. J. {\bf 85},  253 -- 270 (1996).

\bibitem{Bak-Eme}
\textsc{Bakry, D. and {\'E}mery, M.},
 {Diffusions hypercontractives},
 {S\'eminaire de probabilit\'es, {XIX}, 1983/84},
  {Lecture Notes in Math.} {\bf 1123},     {177--206},
{Springer},  {Berlin}     {1985}.

\bibitem{BGL}
\textsc{Bakry, D., Gentil, I. and Ledoux, M.},
\textit {Analysis and Geometry of Markov Diffusion Operators},
Grundlehren 348,
{Springer},  {Berlin}     {2014}.

\bibitem{Bak-Qia}
\textsc{Bakry, D. and Qian, Z.},
Some new results on eigenvectors via dimension, diameter, and Ricci curvature,
Advances in Math. {\bf 155}, 98-153 (2000).

\bibitem{Bes}
\textsc{Besse, A.L.},
\textit{Einstein manifolds}. Springer 1987.

\bibitem{EKS}
\textsc{Erbar, M. and Kuwada, K. and Sturm, K.T.}
On the equivalence of the entropic curvature-dimension condition and Bochner's inequality on metric measure spaces,
Preprint arXiv:1303.4382(2013).

\bibitem{FOT}
\textsc{Fukushima, M. and  Oshima, Y. and Takeda, M.},
\textit{Dirichlet forms and symmetric Markov processes}. De Gruyter 2011.

\bibitem{HM}
\textsc{Han, B.-X.. and Mkrtchyan, A.},
\textit{Conformal transformation on metric measure spaces},
 {Preprint  arXiv:1511.03115}.

%\bibitem{Led}
%\textsc{Ledoux, M.},

\bibitem{Led2}
\textsc{Ledoux, M.},
\textit{The geometry of Markov diffusion generators}.
ETHZ, Institute for Mathematical Research, 1998.

\bibitem{LV}
\textsc{Lott, J. and Villani, C.}
Ricci curvature for metric-measure spaces via optimal transport,  Annals of Math. {\bf 169},  903-991
(2009).

\bibitem{RevYor}
\textsc{Revuz, Yor, M.},
\textit{Continuous martingales and Brownian motion}. Springer 1999.

\bibitem{Sav}
\textsc{Savar{\'e}, G.},
Self-improvement of the Bakry-{\'E}mery condition and Wasserstein contraction of the heat flow in RCD(K,$\infty$) metric measure spaces,
 to appear in Disc. Cont. Dyn. Sist. A.
(2013) 1-23


\bibitem{Stu1}
\textsc{Sturm, K.T.},
    {On the geometry of metric measure spaces. {I}},
  {Acta Math.} {\bf 196}
    {(2006)}, {65--131}.

		
\bibitem{Stu2}
\textsc{Sturm, K.T.},
    {On the geometry of metric measure spaces. {II}},
  {Acta Math.} {\bf 196}
    {(2006)}, {133--177}.
		
\bibitem{ThaWan}
\textsc{Thalmaier, A. and Wang, F.-Y.}
Gradient estimates for harmonic functions on regular domains in Riemannian manifolds,
Journal of Functional Analysis {\bf 155} (1998) 109--124.

%\bibitem{Villani2}
%\textsc{Villani, C.} \textit{Optimal transport, old and new}. Springer 2009.


\bibitem{Wan}
\textsc{Wang, F.-Y.}
Estimates of the first Neumann eigenvalue and the log-Sobolev constant on non-convex manifolds, Math. Nach.
{\bf 280} (2007),1431--1439.
\end{thebibliography}
\end{document}